\documentclass{amsart}
\usepackage[latin1]{inputenc}
\usepackage[T1]{fontenc}
\usepackage{lmodern}
\usepackage[english]{babel}
\usepackage{microtype}

\usepackage{amsmath,amssymb,amsfonts,amsthm}
\usepackage{mathtools,accents}
\usepackage{mathrsfs}
\usepackage{aliascnt}
\usepackage{braket}
\usepackage{bm}

\usepackage[citecolor=blue,colorlinks]{hyperref}
\addto\extrasenglish{}

\usepackage{enumerate}
\usepackage{xcolor}

\usepackage{aliascnt}

\makeatletter
\def\newaliasedtheorem#1[#2]#3{
  \newaliascnt{#1@alt}{#2}
  \newtheorem{#1}[#1@alt]{#3}
  \expandafter\newcommand\csname #1@altname\endcsname{#3}
}
\makeatother

\theoremstyle{plain}
\newtheorem{theorem}{Theorem}[section]
\newaliasedtheorem{lemma}[theorem]{Lemma}
\newaliasedtheorem{prop}[theorem]{Proposition}
\newaliasedtheorem{claim}[theorem]{Claim}
\newaliasedtheorem{corollary}[theorem]{Corollary}

\theoremstyle{definition}
\newaliasedtheorem{definition}[theorem]{Definition}
\newaliasedtheorem{example}[theorem]{Example}

\theoremstyle{remark}
\newaliasedtheorem{remark}[theorem]{Remark}

\numberwithin{equation}{section}

\def\eps{\varepsilon}
\def\R{{\mathbb{R}}}
\def\N{{\mathbb{N}}}
\def\ra{\rightarrow}
\def\a{{\alpha}}
\def\i{{\infty}}

\title[Double Scattering Channels for NLS]{Double Scattering Channels  
for $1D$ NLS in the Energy Space and its Generalization to Higher Dimensions}

\author[L. Forcella \and N. Visciglia]{Luigi Forcella \and Nicola Visciglia}

\address{Luigi Forcella\hfill\break Scuola Normale Superiore, Piazza dei Cavalieri, 7, 56126 Pisa, Italy}
\email{luigi.forcella@sns.it}

\address{Nicola Visciglia \hfill\break Dipartimento di Matematica, Universit\`a di Pisa, Largo Bruno Pontecorvo, 5, 56100 Pisa, Italy}
\email{nicola.visciglia@unipi.it}




\begin{document}

\maketitle

\begin{abstract}
We consider a class of $1D$ NLS perturbed with a steplike potential.
We prove that the nonlinear solutions satisfy the double scattering channels
in the energy space. 
The proof is based on 
concentration-compactness/rigidity method. We prove moreover that in dimension higher than one, classical scattering holds if the potential is periodic in all but one dimension and is steplike and repulsive in the remaining one. 
\end{abstract}

\section{Introduction}\label{intro}

The main motivation of this paper is the analysis of the behavior for large times 
of solutions to the following $1D$ Cauchy problems (see below for a
suitable generalization in higher dimensions):
\begin{equation}\label{NLSV}
\left\{ \begin{aligned}
i\partial_{t}u+\partial_{x}^2 u-Vu&=|u|^{\alpha}u, \quad (t,x)\in \mathbb{R}\times 
\mathbb{R}, \quad \alpha>4\\
u(0)&=u_0\in H^1(\mathbb{R})
\end{aligned}\right.,
\end{equation}
namely we treat the $L^2$-supercritical defocusing power nonlinearities, and $V:\R\to\R$ is a real time-independent steplike potential. More precisely we assume that
$V(x)$ has two different asymptotic behaviors at $\pm\infty$:
\begin{equation}\label{differentlimit}
a_+=\lim_{x\rightarrow+ \infty} V(x)\neq \lim_{x\rightarrow -\infty} V(x)=a_-.
\end{equation}
In order to simplify the presentation we shall assume in our treatment 
\begin{equation*}
a_+=1 \quad\hbox{ and }\quad a_-=0,
\end{equation*}
but of course the arguments and the results below
can be extended to the general case $a_+\neq a_-$.
Roughly speaking the Cauchy problem \eqref{NLSV} looks like
the 
following Cauchy problems 
respectively for $x>>0$ and $x<<0$: 
\begin{equation}\label{NLS}
\left\{\begin{aligned}
i\partial_{t}v+\partial_{x}^2v&=|v|^{\alpha}v\\
v(0)&=v_0\in H^1(\mathbb{R})
\end{aligned} \right.
\end{equation}
and \begin{equation}\label{NLS1}
\left\{ \begin{aligned}
i\partial_{t}v+(\partial_{x}^2-1)v&=|v|^{\alpha}v\\
v(0)&=v_0\in H^1(\mathbb{R})
\end{aligned} \right..
\end{equation}
\noindent We recall that in $1D,$ the long time behavior of solutions to \eqref{NLS} 
(and also to \eqref{NLS1}) 
has been first obtained in the work by Nakanishi (see \cite{N}), who proved that
the solutions to \eqref{NLS} (and also \eqref{NLS1})
scatter to a free wave in $H^{1}(\mathbb{R})$
(see \autoref{def-classic} for a precise definition of scattering from nonlinear to linear solutions in a general framework).  
The Nakanishi argument is a combination of the induction
on the energy in conjunction with a suitable version of  
Morawetz inequalities with time-dependent weights. Alternative proofs 
based on the use of the interaction Morawetz estimates, first introduced in \cite{CKSTT},
have been obtained later (see 
\cite{CHVZ, CGT, PV, Vis} and the references therein). 
\newline
As far as we know, there are not results available in the literature about the
long time behavior of solutions to NLS perturbed by a steplike potential,
and this is the main motivation of this paper. 

We recall that in physics literature the steplike potentials are called
{\em barrier potentials} and are very useful to study the interactions of particles 
with the boundary of a solid (see  Gesztesy \cite{G} and Gesztesy, Noewll and P\"otz \cite{GNP} for more details). We also mention the paper
\cite{DaSi} where, in between other results,
it is studied via the twisting trick the long time behavior of solutions
to the propagator $e^{i t(\partial_x^2 - V)}$, where $V(x)$ is steplike
(see below for more details on the definition of the double scattering channels).
For a more complete list of references devoted to the analysis of steplike potentials
we refer to \cite{DS}.
Nevertheless, at the best of our knowledge, no results are available about
the long time behavior of solutions to nonlinear Cauchy problem \eqref{NLSV} with a steplike potential. 
\\
\\
It is worth mentioning that in $1D$,
we can rely on the Sobolev embedding 
$H^1(\mathbb{R} )\hookrightarrow L^\infty(\mathbb{R})$. Hence it is straightforward 
to show that the 
Cauchy problem \eqref{NLSV} is locally well posed in the energy space $H^1(\mathbb{R})$. For higher dimensions the local well posedness theory is still well known, see for example Cazenave's book \cite{CZ}, once the good dispersive properties of the linear flow are established. Moreover, thanks to the defocusing character of the nonlinearity,
we can rely on the conservation of the mass and of the energy below, valid in any dimension:
\begin{equation}\label{consmass}
\|u(t)\|_{L^2(\mathbb{R}^d)}=\|u(0)\|_{L^2(\mathbb{R}^d)},
\end{equation}
and 
\begin{equation}\label{consen}
E(u(t)):=\frac{1}{2}\int_{\mathbb{R}^d}\bigg(|\nabla u(t)|^2+V|u(t)|^2+\frac{2}{\alpha+2}|u(t)|^{\alpha+2}\bigg) dx=E(u(0)),
\end{equation}
in order to deduce that the solutions are global. Hence for any initial datum $u_0\in H^1(\mathbb{R}^d)$ there exists one 
unique global solution $u(t,x)\in {\mathcal C}(\mathbb{R}; H^1(\mathbb{R}^d))$ to 
\eqref{NLSV} for $d=1$ (and to \eqref{NLSV-d} below in higher dimension).

It is well-known that a key point in order to study the  long time behavior of nonlinear solutions
is a good knowledge of the dispersive properties
of the linear flow, namely the so called Strichartz estimates.
A lot of works have been written in the literature about the topic, both in $1D$ and in higher dimensions. We briefly mention 
\cite{AY, CK, DF, GS, W1, W2, Y} 
for the one dimensional case and \cite{BPST-Z, GVV, JK, JSS, R, RS} for the higher dimensional case, referring to the bibliographies contained in these papers for a more detailed list of works on the subject.
It is worth mentioning that in all the papers mentioned above the potential perturbation
is assumed to decay at infinity, hence steplike potential are not allowed.
Concerning contributions in the literature to NLS perturbed by a decaying potential
we have several results, in between we quote the following 
most recent ones: \cite{BV, CA, CGV, GHW, H, La, Li, LZ}, and all the references therein.

At the best of our knowledge, the unique paper where the dispersive properties of the corresponding $1D$ 
linear flow perturbed by a steplike potential $V(x)$ have been analyzed is \cite{DS},
where the $L^1-L^ \infty$ decay estimate in $1D$ is proved:
\begin{equation}\label{disp}
\|e^{it(\partial_{x}^2-V)}f\|_{L^{\infty}(\mathbb{R})}\lesssim|t|^{-1/2}\|f\|_{L^1(\mathbb{R})}, \quad \forall\, t\neq0\quad \forall\, f\in L^1(\R).
\end{equation}

We point out that beside the different spatial behavior of $V(x)$ on left and on right of the line, other assumptions must be satisfied by the potential. 

There is a huge literature devoted to those spectral properties, nevertheless 
we shall not focus on it since our main point is to show how 
to go from \eqref{disp} to the analysis of the long time 
behavior of solutions to \eqref{NLSV}. We will assume therefore as black-box the dispersive relation \eqref{disp} and for its proof, under further assumptions
on the steplike potential $V(x)$, we refer to Theorem $1.1$ in \cite{DS}. 
Our first aim is to provide a nonlinear version of the 
{\em double scattering channels } that has been established in the literature in the linear context (see \cite{DaSi}).
\begin{definition}\label{def1}
Let $u_0\in H^1(\mathbb{R})$ be given and  $u(t, x)\in \mathcal {C}(\mathbb{R};H^1(\mathbb{R}))$
be the unique global solution to \eqref{NLSV} with $V(x)$ that satisfies
\eqref{differentlimit} with $a_-=0$ and $a_+=1$. Then we say that 
$u(t,x)$ satisfies the {\em double scattering channels} provided that
$$\lim_{t\rightarrow \pm \infty} \|u(t,x) - e^{it\partial_x^2} \eta_\pm 
- e^{it(\partial_x^2-1)} \gamma_\pm \|_{H^1 (\mathbb{R})}=0,
$$
for suitable $\eta_\pm, \gamma_\pm\in H^1(\mathbb{R})$.
\end{definition}

We can now state our first result in $1D$.
\begin{theorem}\label{1Dthm} 
Assume that $V:\R\to\R$  is a bounded, nonnegative potential satisfying  \eqref{differentlimit}
with $a_-=0$ and $a_+=1,$ and  \eqref{disp}. Furthermore, suppose that: 
\begin{itemize}
\item\label{hhyp0} $|\partial_xV(x)|\overset{|x|\rightarrow\infty}\longrightarrow0$; \\
\item\label{hhyp1}$\lim_{x\rightarrow +\infty}|x|^{1+\varepsilon}|V(x)-1|=0,\,
\lim_{x\rightarrow -\infty}|x|^{1+\varepsilon}|V(x)|=0\,$ for some\, $\epsilon>0$;\\
\item\label{hhyp3} $x \cdot \partial_xV (x)\leq 0.$
\end{itemize}
Then for every $u_0\in H^1(\R)$ the corresponding unique solution 
$u(t,x)\in \mathcal {C}(\mathbb{R};H^1(\mathbb{R}^d))$ to \eqref{NLSV}
satisfies the {\em double scattering channels} (according to \autoref{def1}).
\end{theorem}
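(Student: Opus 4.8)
The plan is to run the Kenig--Merle concentration-compactness/rigidity scheme for the defocusing problem \eqref{NLSV}. The target is the global space-time estimate $\|u\|_{L^4_tL^\i_x(\R\times\R)}<\i$ for every $u_0\in H^1(\R)$; once this is in hand, the Duhamel formula together with the Strichartz estimates for $e^{it(\partial_x^2-V)}$ produces $\phi_\pm\in H^1(\R)$ with $\|u(t)-e^{it(\partial_x^2-V)}\phi_\pm\|_{H^1(\R)}\to0$ as $t\to\pm\i$, i.e. the nonlinear solution scatters to a \emph{linear} solution of the perturbed flow. The remaining step is purely linear: the double-channel decomposition for the steplike propagator (in the spirit of \cite{DaSi}) gives $\|e^{it(\partial_x^2-V)}\phi_\pm-e^{it\partial_x^2}\eta_\pm-e^{it(\partial_x^2-1)}\gamma_\pm\|_{H^1(\R)}\to0$ for suitable $\eta_\pm,\gamma_\pm\in H^1(\R)$, which combined with the nonlinear scattering is exactly \autoref{def1}.

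For the preliminaries one records, starting from the black-box dispersive bound \eqref{disp}, the full family of one-dimensional Strichartz estimates for $e^{it(\partial_x^2-V)}$ and for the associated Duhamel operator (via $TT^*$ and the Christ--Kiselev lemma); since $V$ is bounded and $H^1(\R)\hookrightarrow L^\i(\R)$, these yield local well-posedness, small-data scattering, and a long-time perturbation (stability) lemma in $H^1(\R)$. Because $V\geq0$, the conserved energy \eqref{consen} is coercive, $E(u)\geq\tfrac12\|\partial_xu\|_{L^2}^2$, so \eqref{consmass} forces every solution to be global with $\sup_t\|u(t)\|_{H^1(\R)}\lesssim\|u_0\|_{H^1(\R)}$.

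For the concentration-compactness step one argues by contradiction: if the global $L^4_tL^\i_x$ bound fails there is a minimal threshold, in terms of the conserved mass and energy, at which it first fails, and a linear profile decomposition for $e^{it(\partial_x^2-V)}$ applied to a sequence of near-critical data produces a critical (minimal, non-scattering) element. The decisive structural point is the trichotomy of the profiles according to their translation parameters $x_n^j$: profiles with $x_n^j\to-\i$ evolve, after recentring, like solutions of the free equation \eqref{NLS}; profiles with $x_n^j\to+\i$ like solutions of the constant-coefficient equation \eqref{NLS1}; profiles with bounded $x_n^j$ like solutions of the full equation \eqref{NLSV}. This is precisely where the hypotheses $|\partial_xV|\to0$ and $|x|^{1+\eps}|V-a_\pm|\to0$ enter, namely to force $\|e^{it(\partial_x^2-V)}(\phi(\cdot-x_n))-(e^{it(\partial_x^2-1)}\phi)(\cdot-x_n)\|\to0$ in Strichartz norms when $x_n\to+\i$, and the analogue with $e^{it\partial_x^2}$ when $x_n\to-\i$. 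The first two families of \emph{nonlinear} profiles scatter by Nakanishi's theorem \cite{N} for \eqref{NLS} and \eqref{NLS1}, while a bounded-$x_n^j$ profile of strictly smaller mass and energy scatters by the minimality hypothesis; the usual decoupling of Strichartz norms then forces a single profile, carrying the full mass and energy, with bounded $x_n^j$ and $t_n^j$. Its nonlinear evolution is a global solution $u_c\not\equiv0$ of \eqref{NLSV} with $\|u_c\|_{L^4_tL^\i_x}=\i$ whose trajectory is precompact in $H^1(\R)$ modulo a residual space translation $y(t)$; and $y(t)$ must stay bounded, for otherwise the trajectory would concentrate near $x=\pm\i$ and thus lie in the scattering regime of \eqref{NLS} or \eqref{NLS1}, making $\|u_c\|_{L^4_tL^\i_x}$ finite. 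Absorbing the bounded $y(t)$, the set $\{u_c(t):t\in\R\}$ is precompact in $H^1(\R)$.

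Finally, the rigidity step shows $u_c\equiv0$. Let $a\in C^\i(\R)$ be convex with $a(x)=|x|$ for $|x|\geq1$ and set $z(t)=\int_\R a(x)|u_c(t,x)|^2\,dx$, which is finite and bounded uniformly in $t$ by precompactness, with $|z'(t)|\lesssim\|a'\|_{L^\i}\sup_t\|u_c(t)\|_{H^1}^2<\i$. A direct virial/Morawetz computation gives $z''(t)=\mathcal D(t)-\int_\R a'(x)\,\partial_xV(x)\,|u_c(t,x)|^2\,dx$, where $\mathcal D(t)\geq0$ is a manifestly nonnegative density (combining the contribution of $a''\geq0$ to the kinetic and defocusing nonlinear terms with, in one dimension, the boundary contribution at $x=0$). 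The repulsivity hypothesis $x\,\partial_xV(x)\leq0$ gives $a'\,\partial_xV=\operatorname{sgn}(x)\,\partial_xV\leq0$ away from the origin, so the potential term is nonnegative as well; integrating over $[0,T]$ and letting $T\to\i$ yields $\int_\R\mathcal D(t)\,dt<\i$, whereas precompactness together with $u_c\not\equiv0$ forces $\mathcal D$ to be bounded below on every unit time interval, a contradiction. Hence $u_c\equiv0$, contradicting $\|u_c\|_{L^4_tL^\i_x}=\i$, so the global space-time bound holds and the theorem follows. I expect the main obstacle to be the linear profile decomposition adapted to the steplike flow: establishing the trichotomy above, together with the vanishing of the far-field comparison errors under the decay hypotheses on $V$, and—in the same circle of ideas—making precise the linear double-channel identity for $e^{it(\partial_x^2-V)}$ used at the very end. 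By contrast the local theory and the virial rigidity are routine once the repulsivity $x\,\partial_xV\leq0$ is available to sign the Morawetz identity.
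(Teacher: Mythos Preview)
Your overall architecture matches the paper's: first run Kenig--Merle to get scattering of the nonlinear solution to $e^{it(\partial_x^2-V)}\psi_\pm$ in $H^1$, then invoke a linear double-channel result to split the perturbed linear flow. The profile decomposition and the trichotomy on $x_n^j$ you describe are essentially what the paper does in \autoref{profile}--\autoref{nonlinearpro}. Two points deserve correction, however.

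\textbf{Rigidity.} Your virial argument has a genuine gap. With $a(x)=|x|$ for $|x|\geq1$, the quantity $z(t)=\int a(x)|u_c|^2\,dx$ is \emph{not} finite (let alone uniformly bounded) merely by $H^1$-precompactness of the trajectory: tightness gives $\int_{|x|>R}|u_c|^2<\varepsilon$, which does not control $\int|x|\,|u_c|^2$. Even working directly with the Morawetz action $M(t)=2\Im\int a'\bar u_c\partial_xu_c$ (which \emph{is} bounded since $|a'|\leq1$), your ``manifestly nonnegative'' density $\mathcal D$ is not: the term $-\int a''''|u_c|^2$ has no sign, and the positive contribution $\int a''|u_c|^{\alpha+2}$ is supported only in $|x|\leq1$, whereas precompactness localizes $u_c$ in some $|x|\leq R_0$ with $R_0$ possibly large. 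One can repair this by rescaling $a\mapsto Ra(\cdot/R)$ and balancing the $R^{-1}$ main term against the $R^{-2}$ error, but this is precisely the low-dimensional subtlety that makes virial rigidity in $d=1$ nontrivial. The paper sidesteps all of this by using Nakanishi's time-dependent Morawetz weight $\lambda=(t^2+x^2)^{1/2}$, obtaining directly
\[
\int_\R\int_\R\frac{t^2|u|^{\alpha+2}}{(t^2+x^2)^{3/2}}\,dx\,dt<\infty,
\]
which together with the precompactness localization (\autoref{lem1}) yields the contradiction without any spatial moment. Your assessment that ``the virial rigidity is routine'' is therefore inverted: this is where the paper invests a specific tool.

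\textbf{Linear double channel in $H^1$.} Davies--Simon \cite{DaSi} gives the splitting only in $L^2$; upgrading to $H^1$ is the content of the paper's \autoref{linscat} and is not automatic. The argument there first shows that $\psi\in H^2$ forces $\eta_\pm,\gamma_\pm\in H^2$ (by sampling along $t_n=2\pi n$ and $t_n=(2n+1)\pi$), interpolates to get $H^1$ convergence on $H^2$, and then proves the map $\psi\mapsto(\eta_+,\gamma_+)$ is bounded $H^1\to H^1\times H^1$ to close by density. Incidentally, the decay hypothesis $|x|^{1+\varepsilon}|V-a_\pm|\to0$ is used \emph{here} (via the $L^2$ theory of \cite{DaSi}), not in the profile decomposition as you suggest; the profile decomposition only needs $|\partial_xV|\to0$ and the dispersive estimate.
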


\begin{remark}\label{per1D}
It is worth mentioning that the assumption  \eqref{disp} it may look
somehow quite strong.
However we emphasize that the knowledge of the estimate
\eqref{disp} provides for free informations on the long time behavior of nonlinear solutions
for small data, but in general it is more complicated to deal with large data, as it is the case
in
\autoref{1Dthm}. For instance consider the case of $1D$ NLS perturbed 
by a periodic potential. In this situation it has been established in the literature the
validity of the dispersive estimate for the linear propagator (see \cite{Cu}) 
and also the small data nonlinear scattering (\cite{CuV}). However, at the best of our knowledge,
it is unclear how to deal with the large data scattering.
\end{remark}
The proof of \autoref{1Dthm} 
goes in two steps. The first one is to show that solutions
to \eqref{NLSV} scatter to solutions of the linear problem
(see \autoref{def-classic} for a rigorous definition of scattering in a general framework); the second one is the asymptotic description of solutions
to the linear problem associated with \eqref{NLSV} in the energy space $H^1$
(see \autoref{linscat}). 
Concerning the first step we use the technique of concentration-compactness/rigidity pioneered by Kenig and Merle (see  \cite{KM1, KM2}).
Since this argument is rather general, we shall present it 
in a more general higher dimensional setting. 
\newline
More precisely in higher dimension 
we consider the following family of NLS
\begin{equation}\label{NLSV-d}
\left\{ \begin{aligned}
i\partial_{t}u+\Delta u-Vu&=|u|^{\alpha}u, \quad (t,x)\in \mathbb{R}\times 
\mathbb{R}^d\\
u(0)&=u_0\in H^1(\mathbb{R}^d)
\end{aligned}\right.,
\end{equation}
where
\begin{equation*}
\begin{cases}
 \frac4d<\alpha<\frac{4}{d-2} &\text{if}\qquad d\geq3\\
 \frac4d<\a &\text{if}\qquad d\leq2
 \end{cases}.
\end{equation*}
The potential $V(x)$ is assumed to satisfy, uniformly in $\bar x\in\R^{d-1},$  
\begin{equation}\label{difflim2}
a_-=\lim_{x_1\rightarrow-\infty} V(x_1,\bar x)\neq \lim_{x_1\rightarrow +\infty} V(x_1, \bar x)=a_+, \quad\hbox{ where }\quad x=(x_1, \bar x).
\end{equation}

Moreover we assume $V(x)$ 
periodic w.r.t. the variables $\bar x=(x_2,\dots, x_d).$ 
Namely we assume the 
existence of $d-1$ linear independent vectors $P_2,\dots,P_d\in\R^{d-1}$ such that for any fixed 
$x_1\in\R,$ the following holds:
\begin{equation}\label{periods}
\begin{aligned}
V(x_1, \bar x)=V(x_1,\bar x +k_2P_2+\dots +k_dP_d),\\
\forall\,\bar x=(x_2,\dots,x_d)\in\R^{d-1}, 
\quad \forall\, (k_2,\dots,k_d)\in\mathbb{Z}^{d-1}. 
\end{aligned}
\end{equation}
Some comments about this choice of assumptions on $V(x)$ are given in  \autoref{assVd}.
\newline

Exactly as in $1D$ case mentioned above, 
we assume 
as a black-box the dispersive estimate  
\begin{equation}\label{disp-d}
\|e^{it(\Delta-V)}f\|_{L^{\infty}(\mathbb{R}^d)}\lesssim|t|^{-d/2}\|f\|_{L^1(\mathbb{R}^d)}, \quad \forall\, t\neq0\quad \forall\, f\in L^1(\R^d).
\end{equation} 

Next we recall the classical definition of scattering from nonlinear to linear solutions
in a general setting. We recall that by classical arguments
we have that once \eqref{disp-d} is granted, then the local (and also the global, since the equation is defocusing) existence and uniqueness of solutions 
to \eqref{NLSV-d} follows by standard arguments.

\begin{definition}\label{def-classic}
Let $u_0\in H^1(\mathbb{R}^d)$ be given and  $u(t, x)\in \mathcal {C}(\mathbb{R};H^1(\mathbb{R}^d))$
be the unique global solution to \eqref{NLSV-d}. Then we say that 
$u(t,x)$ {\em scatters to a linear solution}  provided that
$$\lim_{t\rightarrow \pm \infty} \|u(t,x) - e^{it(\Delta-V)}\psi^\pm \|_{H^1 (\mathbb{R}^d)}=0
$$
for suitable $\psi^\pm\in H^1(\mathbb{R}^d).$ \end{definition}
In the sequel we will also use the following auxiliary Cauchy problems
that roughly speaking represent the Cauchy problems 
\eqref{NLSV-d} in the regions
$x_1<<0$ and $x_1>>0$ (provide that we assume 
$a_-=0$ and $a_+=1$ in \eqref{difflim2}): 
\begin{equation}\label{NLS-d}
\left\{ \begin{aligned}
i\partial_{t}u+\Delta u&=|u|^\a u \quad (t,x)\in \mathbb{R}\times 
\mathbb{R}^d\\
u(0)&=\psi\in H^1(\mathbb{R}^d)
\end{aligned}\right.,
\end{equation}
and 
\begin{equation}\label{NLS1-d}
\left\{ \begin{aligned}
i\partial_{t}u+(\Delta -1)u&=|u|^\a u \quad (t,x)\in \mathbb{R}\times 
\mathbb{R}^d\\
u(0)&=\psi\in H^1(\mathbb{R}^d)
\end{aligned}\right..
\end{equation}
Notice that those problems are respectively the analogue 
of \eqref{NLS} and \eqref{NLS1} in higher dimensional setting.
\newline

We can now state our main result about scattering from nonlinear to 
linear solutions in general dimension $d\geq 1$.
\begin{theorem}\label{finalthm} 
Let $V\in\mathcal C^1(\mathbb{R}^d;\R)$ be a bounded, nonnegative potential which satisfies \eqref{difflim2} with $a_-=0,\,a_+=1,$ \eqref{periods} and assume moreover:
\begin{itemize}
\item\label{hyp0} $|\nabla V(x_1,\bar x)|\overset{|x_1|\to\infty}\longrightarrow0$ uniformly in $\bar x\in\R^{d-1};$\\
\item\label{hyp2} the decay estimate \eqref{disp-d} is satisfied;\\
\item\label{hyp3} $x_1 \cdot \partial_{x_1}V (x)\leq 0$ for any $x\in\R^d$.
\end{itemize}
Then for every $u_0\in H^1(\mathbb{R}^d)$ the unique corresponding global solution
$u(t,x)\in \mathcal {C}(\mathbb{R};H^1(\mathbb{R}^d))$ to \eqref{NLSV-d} scatters.
\end{theorem}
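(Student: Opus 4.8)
\medskip
\noindent\textbf{Proof strategy.} The plan is to run the concentration--compactness/rigidity machinery of Kenig and Merle, adapted to the propagator $e^{it(\Delta-V)}$. The preliminary, and essentially routine, step is to upgrade the dispersive bound \eqref{disp-d} to the full family of Strichartz estimates for $e^{it(\Delta-V)}$ via $TT^\ast$ (using also that $-\Delta+V$ is self-adjoint, $V$ being real and bounded), and then --- since $\tfrac4d<\a<\tfrac4{d-2}$ --- to set up local well-posedness in $H^1$, a small-data scattering statement, and a long-time perturbation (stability) lemma in a Strichartz norm $\|\cdot\|_{S(I)}$. Because $V\ge0$, the conservation laws \eqref{consmass}--\eqref{consen} give $\sup_{t}\|u(t)\|_{H^1}\lesssim\|u_0\|_{H^1}$, so that scattering in the sense of \autoref{def-classic} becomes equivalent to $\|u\|_{S(\R)}<\infty$. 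We then argue by contradiction: if some datum fails to scatter, a minimization over the two conserved quantities (there is no scaling symmetry, so these are the only relevant invariants) produces a sequence of solutions $u_n$, bounded in $H^1$, with mass and energy converging to the critical threshold and $\|u_n\|_{S(\R)}\to\infty$.

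The heart of the matter is a linear profile decomposition adapted to the perturbed flow. Because $V$ is not invariant under translations in $x_1$, the profiles split into two kinds: \emph{$V$-adapted profiles}, of the form $e^{it(\Delta-V)}\phi^j$ with only a time-translation parameter (and, by the periodicity \eqref{periods}, possibly a lattice translation in $\bar x$), which remain spatially concentrated in a fixed compact range of $x_1$; and \emph{asymptotic profiles} with spatial centres $x_n^j=(x_{1,n}^j,\bar x_n^j)$ satisfying $x_{1,n}^j\to-\infty$ or $x_{1,n}^j\to+\infty$, along which the translated potentials $V(\cdot+x_n^j)$ converge locally uniformly to $0$ or to $1$ respectively. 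For the latter, the perturbation lemma shows that the corresponding nonlinear profile is asymptotically a solution of \eqref{NLS-d} (resp.\ of \eqref{NLS1-d}, which by the gauge change $u\mapsto e^{it}u$ is equivalent to \eqref{NLS-d}); since the defocusing intercritical equation \eqref{NLS-d} scatters for all $H^1$ data (Nakanishi's theorem in $d=1$, and its higher-dimensional analogue), every asymptotic profile has $S(\R)$ norm bounded uniformly in $n$. The usual argument then forces all the growth of $\|u_n\|_{S(\R)}$ onto a single $V$-adapted profile with bounded time-translation, yielding a \emph{critical element}: a nonzero global solution $u_c$ of \eqref{NLSV-d} with $\|u_c\|_{S(\R)}=\infty$ whose trajectory $\{u_c(t)\}_{t\in\R}$ is precompact in $H^1(\R^d)$ up to translations $(0,\bar x(t))$ in the periodic directions, with the $x_1$-location staying bounded. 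I expect this step --- carrying out the profile decomposition for the non-translation-invariant flow, establishing the mass/energy decoupling, and showing the asymptotic profiles scatter by comparison with \eqref{NLS-d}--\eqref{NLS1-d} --- to be the main obstacle.

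It then remains to show that no such $u_c$ can exist, i.e.\ the rigidity step. We use a virial/Morawetz identity in the $x_1$-direction with weight $\phi_R(x)=R^2\psi(x_1/R)$, where $\psi\in C^\infty(\R)$ is even, $\psi(s)=s^2$ for $|s|\le1$, $s\psi'(s)\ge0$ for all $s$, and $\psi'',\psi''''$ are bounded; set $M_R(t)=2\,\IM\int_{\R^d}\bar u_c\,\partial_{x_1}\phi_R\,\partial_{x_1}u_c\,dx$. Differentiating and using \eqref{NLSV-d} gives
\begin{multline*}
\frac{d}{dt}M_R(t)=4\int_{\R^d}\psi''(x_1/R)\,|\partial_{x_1}u_c|^2\,dx+\frac{2\a}{\a+2}\int_{\R^d}\psi''(x_1/R)\,|u_c|^{\a+2}\,dx\\
-\frac1{R^2}\int_{\R^d}\psi''''(x_1/R)\,|u_c|^2\,dx-2\int_{\R^d}R\,\psi'(x_1/R)\,\partial_{x_1}V\,|u_c|^2\,dx.
\end{multline*}
The last integral is nonnegative on all of $\R^d$: $R\psi'(x_1/R)$ has the sign of $x_1$, so $R\psi'(x_1/R)\,\partial_{x_1}V$ has the sign of $x_1\,\partial_{x_1}V\le0$, which is precisely the hypothesis of the theorem. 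On $\{|x_1|\le R\}$ one has $\psi''(x_1/R)=2$, so the first two integrals contribute $8\int_{|x_1|\le R}|\partial_{x_1}u_c|^2+\frac{4\a}{\a+2}\int_{|x_1|\le R}|u_c|^{\a+2}$, while their contributions from $\{|x_1|>R\}$, together with the $\psi''''$ term, are bounded by $CR^{-2}+C\sup_t\int_{|x_1|>R}\big(|\partial_{x_1}u_c|^2+|u_c|^{\a+2}\big)\,dx$. Precompactness of the trajectory --- together with boundedness of the $x_1$-location --- yields tightness in the $x_1$-variable, so this error tends to $0$ as $R\to\infty$ uniformly in $t$; and mass conservation together with $u_c\not\equiv0$ forces $\inf_t\big(\|\partial_{x_1}u_c(t)\|_{L^2}^2+\|u_c(t)\|_{L^{\a+2}}^{\a+2}\big)=:c_0>0$ (otherwise a subsequence of $\{u_c(t)\}$ would converge in $H^1$ to a function with zero $L^{\a+2}$ norm, hence to $0$, contradicting $\|u_c(t)\|_{L^2}\equiv\|u_0\|_{L^2}>0$). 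Hence $\tfrac{d}{dt}M_R(t)\ge c_0/2$ for all $t$ once $R$ is large, whereas $|M_R(t)|\le2\|\partial_{x_1}\phi_R\|_{L^\infty}\|u_c(t)\|_{L^2}\|\partial_{x_1}u_c(t)\|_{L^2}\lesssim R$ uniformly in $t$. Integrating over $[0,T]$ and letting $T\to\infty$ gives a contradiction, so $u_c\equiv0$ --- absurd. This will prove \autoref{finalthm}.
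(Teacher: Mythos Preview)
Your overall route is the paper's: concentration--compactness/rigidity \`a la Kenig--Merle, with Strichartz estimates derived from \eqref{disp-d}, small-data scattering and a stability lemma, a profile decomposition for $e^{it(\Delta-V)}$ distinguishing profiles with bounded $x_1$-centres (carrying only time shifts and lattice translations in $\bar x$, by periodicity) from escaping profiles with $x_{1,n}^j\to\pm\infty$ (handled by comparison with \eqref{NLS-d}/\eqref{NLS1-d} via the perturbation lemma), and extraction of a nonzero critical element whose orbit is precompact in $H^1$ modulo $(0,\bar x(t))$-translations.

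The genuine difference is the rigidity step. The paper does \emph{not} use a truncated time-independent virial; it uses a Nakanishi--Morawetz identity with the time-dependent weight $\lambda=(t^2+x_1^2)^{1/2}$, obtaining the global spacetime bound
\[
\int_{\R}\int_{\R^{d-1}}\int_{\R}\frac{t^2|u|^{\alpha+2}}{(t^2+x_1^2)^{3/2}}\,dx_1\,d\bar x\,dt<\infty
\]
for \emph{every} $H^1$ solution, and then shows this integral diverges for the critical element (using tightness in $x_1$ and $\inf_t\|u_c(t)\|_{L^{\alpha+2}}>0$). Your localized virial $M_R$ in $x_1$ is a legitimate alternative: the potential term has the good sign on all of $\R^d$ precisely because $x_1\partial_{x_1}V\le0$, the errors from $\{|x_1|>R\}$ vanish uniformly in $t$ by the precompactness (the $\bar x$-translations do not affect $x_1$-tightness), and the lower bound $c_0>0$ follows as you say from mass conservation plus compactness. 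Both arguments exploit only the one-directional repulsivity. What the paper's choice buys is an \emph{a priori} estimate valid for all solutions (useful independently), at the cost of the Nakanishi machinery; what your choice buys is a shorter and more standard endgame, at the cost of relying on compactness already at the level of the Morawetz input rather than only at the final contradiction.
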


\begin{remark}\label{assVd}
Next we comment about the assumptions done 
on the potential $V(x)$ along \autoref{finalthm}. Roughly speaking we assume that the potential $V(x_1,\dots,x_d)$
is steplike and repulsive w.r.t. $x_1$ and it is periodic w.r.t. $(x_2,\dots, x_d)$.
The main motivation of this choice is that this situation is reminiscent,
according with \cite{DaSi}, 
of the higher dimensional version of the $1D$ double scattering channels
mentioned above. Moreover we highlight the fact that the 
repulsivity of the potential in one unique direction is sufficient to get scattering, despite to other situations considered in the literature
where repulsivity is assumed w.r.t. the full set of variables $(x_1,\dots,x_d)$.
Another point is that along the proof of \autoref{finalthm} 
we show how to deal with a partially periodic
potential $V(x)$, despite to the fact that, at the best of our knowledge,
the large data scattering for potentials periodic w.r.t. the full set of variables
has not been established elsewhere, either in the $1D$ case (see \autoref{per1D}).\end{remark}\begin{remark}\label{repulsively}Next we discuss about the repulsivity  assumption on $V(x)$.
As pointed out in \cite{H}, this assumption on the potential plays the same role of the convexity assumption for the obstacle problem studied by Killip, Visan and Zhang in \cite{KVZ}. The author highlights the fact that both strict convexity of the obstacle and the repulsivity of the potential prevent wave packets to refocus once they are reflected by the obstacle or by the potential. 
From a technical point of view the repulsivity assumption is done 
in order to control  the right sign in the virial identities, and
hence to conclude the rigidity part of the Kenig and Merle
argument. In this paper, since we assume repulsivity only in one
direction we use a suitable version of the Nakanishi-Morawetz time-dependent estimates in order to get the rigidity part in the Kenig and Merle road map. Of course
it is a challenging mathematical question to understand
whether or not the repulsivity assumption (partial or global)
on $V(x)$ is a necessary condition in order to get scattering.
\end{remark}When we specialize in $1D,$ we are able to complete the theory of double scattering channels in the energy space. Therefore how to concern the linear part of our work, we give the following result, that in conjunction with \autoref{finalthm}
where we fix $d=1$, provides the proof of \autoref{1Dthm}. 
\begin{theorem}\label{linscat}
Assume  that $V(x)\in\mathcal C(\mathbb{R};\R)$  satisfies the following space decay rate:
\begin{equation}\label{hyp1}
\lim_{x\rightarrow +\infty}|x|^{1+\varepsilon}|V(x)-1|=\lim_{x\rightarrow -\infty}|x|^{1+\varepsilon}|V(x)|=0\quad\text{for\, some\quad} \varepsilon>0.
\end{equation} 
Then for every $\psi\in H^1(\mathbb{R})$  we have
$$\lim_{t\rightarrow \pm \infty} \|e^{it(\partial_x^2-V)} \psi - e^{it\partial_x^2} \eta_\pm 
- e^{it(\partial_x^2-1)} \gamma_\pm \|_{H^1 (\mathbb{R})}=0$$
for suitable $\eta_\pm, \gamma_\pm\in H^1(\mathbb{R}).$
\end{theorem}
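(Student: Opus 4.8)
The plan is to prove \autoref{linscat} by a channel-wise Cook's method. Set $u(t)=e^{it(\partial_x^2-V)}\psi$ and fix a smooth partition of unity $\chi_-+\chi_+\equiv1$ on $\R$ with $\operatorname{supp}\chi_-\subset(-\infty,0]$ and $\chi_-\equiv1$ on $(-\infty,-1]$, so that $\chi_+$ is supported in $[-1,+\infty)$ and equals $1$ on $[0,+\infty)$. The point is that $\chi_-u(t)$ lives in the region where $V\simeq0$ and should look like a free Schr\"odinger wave, while $\chi_+u(t)$ lives where $V\simeq1$ and should look like a solution of $i\partial_tv+(\partial_x^2-1)v=0$. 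Since an eigenfunction of $\partial_x^2-V$ evolves by a pure phase and hence cannot be written in the asserted form, I would first reduce to $\psi$ in the absolutely continuous subspace $\mathcal H_{ac}$ of $\partial_x^2-V$; this reduction is harmless in the application to \autoref{1Dthm}, where $V\ge0$ forces $\mathcal H_{ac}=L^2(\R)$, because an ODE analysis of the generalized eigenfunctions at $\pm\infty$ built on \eqref{hyp1} excludes eigenvalues (at energies in $(-1,0]$ a putative eigenfunction is oscillatory, hence non-$L^2$, at $-\infty$; at energies below $-1$ it is oscillatory at both ends) and a limiting absorption principle excludes singular continuous spectrum.

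Next I would run Cook's method on each piece. A direct computation gives that $\chi_-u$ solves $i\partial_t(\chi_-u)+\partial_x^2(\chi_-u)=\bigl(2\chi_-'\partial_xu+\chi_-''u\bigr)+V\chi_-u$, while, writing $V\chi_+u=\chi_+u+(V-1)\chi_+u$, the piece $\chi_+u$ solves $i\partial_t(\chi_+u)+(\partial_x^2-1)(\chi_+u)=\bigl(2\chi_+'\partial_xu+\chi_+''u\bigr)+(V-1)\chi_+u$. Hence by Duhamel
\[
e^{-it\partial_x^2}\chi_-u(t)-e^{-is\partial_x^2}\chi_-u(s)=-i\int_s^t e^{-i\tau\partial_x^2}\Bigl[2\chi_-'\partial_xu+\chi_-''u+V\chi_-u\Bigr](\tau)\,d\tau,
\]
and similarly with $e^{-i\tau(\partial_x^2-1)}$ in front and source $2\chi_+'\partial_xu+\chi_+''u+(V-1)\chi_+u$. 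If the four source terms are integrable in time in $H^1(\R)$, the Cauchy criterion produces the limits $\eta_\pm:=\lim_{t\to\pm\infty}e^{-it\partial_x^2}\chi_-u(t)$ and $\gamma_\pm:=\lim_{t\to\pm\infty}e^{-it(\partial_x^2-1)}\chi_+u(t)$ in $H^1$, and passing to the limit in the Duhamel identities gives $\|\chi_-u(t)-e^{it\partial_x^2}\eta_\pm\|_{H^1}\to0$ and $\|\chi_+u(t)-e^{it(\partial_x^2-1)}\gamma_\pm\|_{H^1}\to0$; summing and using $u=\chi_-u+\chi_+u$ yields exactly the conclusion of \autoref{linscat}.

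It remains to control the sources, which are of two kinds. The \emph{interface} terms $2\chi_\pm'\partial_xu+\chi_\pm''u$ are supported in the fixed compact set $\{-1\le x\le0\}$, so their time integrability reduces to a \emph{local decay} estimate for $e^{it(\partial_x^2-V)}$ on $\mathcal H_{ac}$ --- concretely an $L^1_t$ (or, after a slight refinement of the cutoffs into time-dependent ones, an $L^2_t$) bound for $\|\langle x\rangle^{-N}\langle\partial_x\rangle e^{it(\partial_x^2-V)}\psi\|_{L^2_x}$. The \emph{tail} terms $V\chi_-u$ and $(V-1)\chi_+u$ are supported in $x\le0$ and $x\ge0$ respectively, where \eqref{hyp1} yields $|V(x)|\lesssim\langle x\rangle^{-1-\eps}$ and $|V(x)-1|\lesssim\langle x\rangle^{-1-\eps}$; splitting each region at $|x|\sim|t|^{1-\delta}$ and using a propagation estimate to the effect that $u(t)$ carries negligible mass in $\{|x|\lesssim|t|^{1-\delta}\}$ as $|t|\to\infty$, one bounds these by a quantity of size $\lesssim|t|^{-1-\eps'}$, which is integrable. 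The $H^1$ (as opposed to merely $L^2$) character of all these bounds, together with the uniform $H^1$-boundedness of $\{e^{it(\partial_x^2-V)}\psi\}$ (which follows from conservation of the quadratic form $\langle-(\partial_x^2-V)u,u\rangle$ and boundedness of $V$), is then a routine, if slightly technical, bootstrap. I expect the decisive step to be the local decay / limiting absorption principle for the genuinely two-ended operator $\partial_x^2-V$: unlike in the decaying-potential case one cannot take the generator of dilations as conjugate operator, and one must either invoke the limiting absorption principle established for steplike potentials in the references or build a Mourre/Graf-type conjugate operator adapted to the two channels --- the short-range rate $|x|^{1+\eps}$ in \eqref{hyp1} being exactly what makes this work.
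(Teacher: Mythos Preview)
Your approach is genuinely different from the paper's, and the contrast is instructive. The paper does \emph{not} run Cook's method or any quantitative decay estimate. Instead it takes the $L^2$ double-channel statement from \cite{DaSi} as a black box and upgrades it to $H^1$ by soft arguments: for $\psi\in H^2$ one exploits the periodicity of the phase $e^{-it}$ by evaluating along $t_n=2\pi n$ and $t_n=(2n+1)\pi$, so that $e^{-it_n\partial_x^2}e^{it_n(\partial_x^2-V)}\psi$ converges in $L^2$ to $\eta_++\gamma_+$ (resp.\ $\eta_+-\gamma_+$), and the uniform $H^2$ bound on the left forces $\eta_+\pm\gamma_+\in H^2$. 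Then $\|f\|_{H^1}\le\|f\|_{L^2}^{1/2}\|f\|_{H^2}^{1/2}$ gives $H^1$ convergence for $\psi\in H^2$, and a second pass with the same time-sequence trick shows $\|\eta_+\|_{H^1}+\|\gamma_+\|_{H^1}\lesssim\|\psi\|_{H^1}$, so one concludes by density. No local decay, no propagation estimates, no Mourre theory.

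Your program, by contrast, has two real difficulties that you have not resolved. First, the interface terms $2\chi_\pm'\partial_xu+\chi_\pm''u$ live on a fixed compact set, and the natural local decay there is at best $|t|^{-1/2}$ (already for the free flow), which is \emph{not} $L^1_t$; you acknowledge this and gesture at time-dependent cutoffs and an $L^2_t$ bound, but Cook's criterion in $H^1$ needs $L^1_t$ of the $H^1$ norm of the source, and passing from an $L^2_t$ smoothing estimate to the existence of channel wave operators is exactly the hard part (Sigal--Soffer/Graf machinery), not a ``slight refinement.'' Second, and more structurally, the $H^1$ norm of your tail source $V\chi_-u$ contains $(\partial_xV)\chi_-u$, but \autoref{linscat} assumes only $V\in\mathcal C(\R;\R)$ with the pointwise decay \eqref{hyp1}; there is no hypothesis whatsoever on $\partial_xV$, so the ``routine bootstrap'' to $H^1$ is not available under the stated assumptions.

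In short: your route could in principle be made to work under stronger hypotheses (smoothness and decay of $\partial_xV$, plus a genuine two-channel propagation/minimal-velocity estimate for the steplike operator), and it would then yield the result directly without invoking \cite{DaSi}. The paper's route is far more economical for the theorem as stated: it trades all of that analysis for the single observation that the two channels can be algebraically separated along arithmetic progressions of times, after which interpolation and conservation do the rest.
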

Notice that \autoref{linscat} is a purely linear statement.
The main point (compared with other results in the literature)
is that the asymptotic convergence 
is stated with respect to the $H^1$ topology and not with respect 
to the weaker $L^2$ topology.
Indeed we point out that the content of \autoref{linscat} is well-known 
and has been proved 
in \cite{DaSi} in the $L^2$ setting. However,
it seems natural to us to understand, in view of \autoref{finalthm}, whether or not the 
result can be extended in the $H^1$ setting.
In fact according with \autoref{finalthm} the asymptotic convergence of the nonlinear dynamic to linear dynamic occurs in the energy space and not only in $L^2$.  As far as we know the issue of $H^1$ linear scattering has not been previously discussed in the literature, not even in the case of a potential which decays in both directions $\pm\infty$. 

For this reason we have decided to state \autoref{linscat} as an independent result.

\subsection{Notations}\label{notations}
The spaces $L_I^{p}L^{q}=L^{p}_{t}(I;L^{q}_x(\mathbb{R}^d))$ are the usual time-space Lebesgue mixed spaces endowed with norm defined by
\begin{equation}\notag
\|u\|_{L^{p}_{t}(I;L^{q}_x(\mathbb{R}^d))}=\bigg(\int_{I}\bigg|\int_{\mathbb{R}^d}|u(t,x)|^q\,dx\bigg|^{p/q}\,dt\bigg)^{1/p}
\end{equation}
and by the context it will be clear which interval $I\subseteq\mathbb{R},$ bounded or unbounded, is considered. If $I=\mathbb{R}$ we will lighten the notation by writing $L^pL^q.$ The operator $\tau_z$ will denote the translation operator $\tau_zf(x):=f(x-z).$ If $z\in\mathbb C,$ $\Re{z}$ and $\Im{z}$ are the common notations for the real and imaginary parts of a complex number and $\bar z$ is its complex conjugate.

In what follows, when dealing with a dimension $d\geq2,$ we write $\R^d\ni x:=(x_1,\bar x)$ with $\bar x\in \R^{d-1}.$ For $x\in\R^d$ the quantity $|x|$ will denote the usual norm in $\R^d$.  

With standard notation, the Hilbert spaces $L^2(\mathbb{R}^d), H^1(\mathbb{R}^d), H^2(\mathbb{R}^d)$ will be denoted simply by $L^2, H^1, H^2$ and likely for all the Lebesgue $L^p(\mathbb{R}^d)$ spaces. By $(\cdot,\cdot)_{L^2}$ we  mean the usual $L^2$-inner product, i.e. $(f,g)_{L^2}=\int_{\mathbb{R}^d}f\bar{g}\,dx,$ $\forall\, f,g\in L^2,$ while the energy norm $\mathcal H$ is the one induced by the inner product $(f,g)_{\mathcal H}:=(f,g)_{\dot H^1}+(Vf,g)_{L^2}.$  

Finally, if $d\geq 3,$ $2^*=\frac{2d}{d-2}$ is the Sobolev conjugate of 2 ($2^*$ being $+\infty$ in dimension $d\leq2$), while if $1\leq p\leq\infty$ then $p^\prime$ is the conjugate exponent given by $p^{\prime}=\frac{p}{p-1}.$
\newline

\section{Strichartz Estimates}\label{strichartz}

The well known Strichartz estimates are a basic tool in the studying of the nonlinear Schr\"odinger equation and we will assume the validity of them in our context. 
Roughly speaking, we can say that these essential space-time estimates arise from the so-called dispersive estimate for the Schr\"odinger propagator
\begin{equation}\label{disp2}
\|e^{it(\Delta-V)}f\|_{L^{\infty}}\lesssim|t|^{-d/2}\|f\|_{L^1,} \quad \forall\, t\neq0\quad \forall\, f\in L^1,
\end{equation}
which is proved in $1D$ in \cite{DS}, under suitable assumptions
on the steplike potential $V(x)$, and we take for granted
by hypothesis. 
\\
As a first consequence we get the following Strichartz estimates
$$\|e^{it(\Delta-V)}f\|_{L^aL^b}\lesssim \|f\|_{L^2}$$
where $a,b\in [1, \infty]$ are assumed to be Strichartz admissible, namely
\begin{equation}\label{admissible}
\frac 2a=d\left(\frac 12-\frac 1 b\right).
\end{equation}

We recall, as already mentioned in the introduction,  that along our paper we are assuming the validity of the dispersive estimate \eqref{disp2} also in higher dimensional setting.

We fix from now on the following Lebesgue exponents
\begin{equation*}
r=\alpha+2,\qquad p=\frac{2\alpha(\alpha+2)}{4-(d-2)\alpha},\qquad q=\frac{2\alpha(\alpha+2)}{d\alpha^2-(d-2)\alpha-4}.
\end{equation*}
(where $\alpha$ is given by the nonlinearity in \eqref{NLSV-d}).
Next, we give the linear estimates that will be fundamental in our study:
\begin{align}\label{fxc1}
\|e^{it(\Delta-V)}f\|_{L^{\frac{4(\a+2)}{d\a}}L^r}&\lesssim \|f\|_{H^1},\\\label{fxc2}
\|e^{it(\Delta-V)}f\|_{L^\frac{2(d+2)}{d} L^\frac{2(d+2)}{d} }&\lesssim \|f\|_{H^1},\\\label{fxc3}
\|e^{it(\Delta-V)}f\|_{L^pL^r}&\lesssim \|f\|_{H^1}.
\end{align}
The last estimate that we need is (some in between) the so-called inhomogeneous Strichartz estimate for non-admissible pairs:
\begin{align}\label{str2.4}
\bigg\|\int_{0}^{t}e^{i(t-s)(\Delta-V)}g(s)\,ds \bigg\|_{L^pL^r}\lesssim\|g\|_{L^{q'}L^{r'}},
\end{align}
whose proof is contained in \cite{CW}.

\begin{remark}
In the unperturbed framework, i.e. in the absence of the potential, and for general dimensions, we refer to \cite{FXC} for comments and references about Strichartz estimates  \eqref{fxc1}, \eqref{fxc2}, \eqref{fxc3} and  \eqref{str2.4}. 
\end{remark}

\section{Perturbative nonlinear results}\label{perturbative}

The results in this section 
are quite standard and hence we skip the complete proofs which can be found for instance in \cite{BV, CZ, FXC}.
In fact the arguments involved are a compound of dispersive properties of the linear propagator
and a standard perturbation argument.

Along this section we assume that the estimate \eqref{disp-d} is satisfied by the propagator associated with the potential $V(x)$. We do not need for the moment to assume the other assumptions done on $V(x)$.

We also specify that in the sequel the couple $(p,r)$ is the one given in \autoref{strichartz}.

\begin{lemma}\label{lemma3.1}
Let $u_0\in H^1$ and assume that the corresponding solution to \eqref{NLSV-d}
satisfies $u(t,x)\in\mathcal{C}(\mathbb{R};H^{1})\cap L^{p}L^r$. 
Then $u(t,x)$ {\em scatters} to a linear solution in $H^1.$
\end{lemma}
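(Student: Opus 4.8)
The plan is to run the standard decomposition argument that upgrades a global space-time bound to scattering, exploiting the Strichartz estimates \eqref{fxc3} and \eqref{str2.4} together with the global assumption $u\in L^pL^r$. First I would write the Duhamel formula
\[
u(t)=e^{it(\Delta-V)}u_0-i\int_0^t e^{i(t-s)(\Delta-V)}\bigl(|u|^\a u\bigr)(s)\,ds,
\]
and define the candidate asymptotic state $\psi^+:=u_0-i\int_0^{+\infty}e^{-is(\Delta-V)}\bigl(|u|^\a u\bigr)(s)\,ds$ (and symmetrically $\psi^-$ for $t\to-\infty$). To make sense of this and to prove convergence, one writes, for $t<t'$,
\[
e^{-it'(\Delta-V)}u(t')-e^{-it(\Delta-V)}u(t)=-i\int_t^{t'}e^{-is(\Delta-V)}\bigl(|u|^\a u\bigr)(s)\,ds,
\]
so that $H^1$-convergence of $e^{-it(\Delta-V)}u(t)$ as $t\to\pm\infty$ — which is exactly the statement of scattering after applying $e^{it(\Delta-V)}$ — reduces to a Cauchy criterion for the tail of the Duhamel integral in $H^1$.

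The key step is then to estimate $\bigl\|\int_t^{t'}e^{-is(\Delta-V)}(|u|^\a u)\,ds\bigr\|_{H^1}$ by the dual Strichartz/inhomogeneous estimate \eqref{str2.4} (and its $H^1$ companion obtained by commuting $\nabla$ with the equation, using $|\nabla(|u|^\a u)|\lesssim |u|^\a|\nabla u|$) by $\|\,|u|^\a u\|_{L^{q'}((t,t');L^{r'})}$ plus lower-order terms, and to show this goes to $0$ as $t,t'\to\infty$. By Hölder in space one has $\||u|^\a u\|_{L^{r'}}\lesssim \|u\|_{L^r}^{\a+1-\theta}\|u\|_{H^1}^{\theta}$-type bounds, and by Hölder in time one distributes the powers so that the controlling norm is $\|u\|_{L^pL^r}^{\a}\|u\|_{L^\infty H^1}$ on the interval $(t,t')$; since $u\in L^pL^r(\mathbb R)$ globally and $u\in L^\infty H^1$ by mass/energy conservation, the $L^pL^r$ norm over $(t,t')$ tends to $0$, giving the Cauchy property. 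Exactly the same computation with $e^{it(\Delta-V)}\psi^+$ in place of the partial Duhamel integral yields $\|u(t)-e^{it(\Delta-V)}\psi^+\|_{H^1}\to 0$.

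The main obstacle — really the only non-bookkeeping point — is checking that the chosen triple of exponents makes the Hölder-in-time-and-space bookkeeping close, i.e. that $(p,r)$ and its companion $(q,r)$ are genuinely a Strichartz-admissible pair and a dual pair for which \eqref{fxc3}–\eqref{str2.4} apply, and that the nonlinearity $|u|^\a u$ lands in $L^{q'}L^{r'}$ after interpolating between the $L^pL^r$ control and the uniform $H^1$ control. This is purely a matter of verifying the scaling relations among $\a$, $p$, $q$, $r$, $d$ fixed in Section~\ref{strichartz}; once that arithmetic is in place the rest is the standard argument above. Since the excerpt explicitly says the proof is standard and refers to \cite{BV,CZ,FXC}, I would present the Duhamel/Strichartz estimate in a few lines and relegate the exponent verification to a remark or to the cited references.
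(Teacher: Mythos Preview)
Your proposal is correct and is exactly the standard Duhamel/Strichartz argument the paper has in mind---the paper's own proof is the single line ``It is a standard consequence of Strichartz estimates'' with a pointer to \cite{BV,CZ,FXC}. The only cosmetic point is that ``commuting $\nabla$ with the equation'' is not literally available when $V\neq 0$; in practice one replaces $\nabla$ by $(1-\Delta+V)^{1/2}$, which does commute with $e^{it(\Delta-V)}$ and whose $L^2$-norm is equivalent to the $H^1$-norm since $V$ is bounded and nonnegative, after which your bookkeeping goes through verbatim.
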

\begin{proof}
It is a standard consequence of Strichartz estimates.
\end{proof}
\begin{lemma}\label{lemma3.2}
There exists $\varepsilon_{0}>0$ such that for any $u_0\in H^{1}$ with $\|u_0\|_{H^1}\leq\varepsilon_{0},$ the solution $u(t,x)$ to the Cauchy problem \eqref{NLSV-d} 
 {\em scatters} to a linear solution in $H^1$.
\end{lemma}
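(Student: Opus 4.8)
The plan is to run the classical Kato--Cazenave--Weissler small-data scheme for the perturbed propagator $e^{it(\Delta-V)}$; beyond soft functional analysis the only ingredient is the dispersive bound \eqref{disp-d}, which is assumed, so the whole argument reduces to Strichartz estimates and H\"older's inequality, and I only sketch the steps (the complete details are standard and can be found in \cite{BV,CZ,FXC}). First, rewrite \eqref{NLSV-d} in Duhamel form
\[
u(t)=\Phi(u)(t):=e^{it(\Delta-V)}u_0-i\int_0^t e^{i(t-s)(\Delta-V)}\big(|u|^{\alpha}u\big)(s)\,ds,
\]
and look for a fixed point of $\Phi$ in the complete metric space
\[
X_\delta:=\Big\{u\in\mathcal C(\mathbb R;H^1):\ \|u\|_{L^pL^r}+\|u\|_Y\le\delta\Big\},\qquad \|u\|_Y:=\|\langle\nabla\rangle_V u\|_{L^\infty L^2}+\|\langle\nabla\rangle_V u\|_{L^{\frac{4(\alpha+2)}{d\alpha}}L^r},
\]
with distance $d(u,v):=\|u-v\|_{L^pL^r}$ and $\delta\sim\|u_0\|_{H^1}$ to be chosen small; here $\langle\nabla\rangle_V:=(1-\Delta+V)^{1/2}$, which, since $V\geq 0$ is bounded, has domain $H^1$ with $\|\langle\nabla\rangle_V f\|_{L^2}\sim\|f\|_{H^1}$ and commutes with $e^{it(\Delta-V)}$. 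Conjugating the admissible Strichartz estimates coming from \eqref{disp-d} by $\langle\nabla\rangle_V$, together with \eqref{fxc3}, gives the linear bound $\|e^{it(\Delta-V)}u_0\|_{L^pL^r}+\|e^{it(\Delta-V)}u_0\|_Y\lesssim\|u_0\|_{H^1}$.

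The core is the nonlinear estimate. Using the identity $(\alpha+1)r'=r$ and H\"older in space one gets $\||u|^{\alpha}u\|_{L^{r'}_x}=\|u\|_{L^r_x}^{\alpha+1}$, and the fractional Leibniz rule for $\langle\nabla\rangle_V$ gives $\|\langle\nabla\rangle_V(|u|^{\alpha}u)\|_{L^{r'}_x}\lesssim\|u\|_{L^r_x}^{\alpha}\|\langle\nabla\rangle_V u\|_{L^r_x}$; H\"older in time then distributes the $\alpha+1$ factors among the norms defining $X_\delta$, interpolating where necessary with $L^\infty_tL^r_x$, which is controlled by $\|u\|_{L^\infty H^1}$ via $H^1\hookrightarrow L^{\alpha+2}$. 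This bookkeeping is exactly what the choice of the exponents $p,q,r$ in \autoref{strichartz} is designed for, and it is the point where the $L^2$-supercriticality $\alpha>4/d$ (and, for $d\geq3$, the energy-subcriticality $\alpha<4/(d-2)$) is used. Feeding the outcome into the inhomogeneous Strichartz estimates — the non-admissible one \eqref{str2.4} for the $L^pL^r$ component of $\Phi(u)$, and the admissible ones derived from \eqref{disp-d}, applied after conjugation by $\langle\nabla\rangle_V$, for the $Y$ component — yields
\[
\|\Phi(u)\|_{L^pL^r}+\|\Phi(u)\|_Y\le C\|u_0\|_{H^1}+C\big(\|u\|_{L^pL^r}+\|u\|_Y\big)^{\alpha+1},
\]
together with the Lipschitz bound $d(\Phi(u),\Phi(v))\lesssim\big(\|u\|_{L^pL^r}^{\alpha}+\|v\|_{L^pL^r}^{\alpha}\big)\,d(u,v)$ (measuring the difference only in the weaker norm $L^pL^r$, which suffices and avoids differentiating the nonlinearity). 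Choosing $\|u_0\|_{H^1}\le\varepsilon_0$ small enough and $\delta=2C\varepsilon_0$ makes $\Phi$ a contraction of $X_\delta$; by uniqueness its fixed point coincides with the global solution of \eqref{NLSV-d}, which therefore lies in $\mathcal C(\mathbb R;H^1)\cap L^pL^r$.

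Since $u\in\mathcal C(\mathbb R;H^1)\cap L^pL^r$, \autoref{lemma3.1} applies and gives that $u$ scatters to a linear solution in $H^1$, which is the assertion. The only genuinely delicate point is the $H^1$/derivative bookkeeping — one must commute a full derivative through the perturbed propagator and differentiate the nonlinearity — which is handled by organizing the argument around the conjugation by $\langle\nabla\rangle_V$ and by using that, for bounded $V\geq 0$, the $\langle\nabla\rangle_V$-Sobolev spaces coincide with the usual ones.
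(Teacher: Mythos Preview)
Your proposal is correct and follows exactly the approach the paper has in mind: the paper's own proof is the single line ``It is a simple consequence of Strichartz estimates,'' and you have spelled out the standard small-data contraction scheme behind that line, citing the same sources \cite{BV,CZ,FXC}. The one step that is not entirely routine---the fractional Leibniz rule for $\langle\nabla\rangle_V$ in $L^{r'}$---can be sidestepped by placing the nonlinearity in $L^1_tH^1_x$ and using only the $L^2$-based equivalence $\|\langle\nabla\rangle_V f\|_{L^2}\sim\|f\|_{H^1}$, which is how the cited references organize the bookkeeping.
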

\begin{proof}
It is a simple consequence of Strichartz estimates.
\end{proof}
\begin{lemma}\label{lemma3}
For every $M>0$ there exist $\varepsilon=\varepsilon(M)>0$ and $C=C(M)>0$ such that: if $u(t,x)\in\mathcal{C}(\mathbb{R};H^1)$ is the unique global solution to \eqref{NLSV-d} and $w\in\mathcal{C}(\mathbb{R};H^1)\cap L^pL^r$ is a global solution to the perturbed problem 
\begin{equation*}
\left\{ \begin{aligned}
i\partial_{t}w+\Delta w-Vw&=|w|^{\alpha}w+e(t,x)\\
w(0,x)&=w_0\in H^1
\end{aligned} \right. 
\end{equation*}
satisfying the conditions $\|w\|_{L^pL^r}\leq M$, $\|\int_{0}^{t}e^{i(t-s)(\Delta-V)}e(s)\,ds\|_{L^pL^r}\leq \varepsilon$ and $\|e^{it(\Delta-V)}(u_0-w_0)\|_{L^pL^r}\leq\varepsilon$, then $u\in L^pL^r$ and $\|u-w\|_{L^pL^r}\leq C \varepsilon.$
\end{lemma}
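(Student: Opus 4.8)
\bigskip

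The plan is to prove this by the classical bootstrap argument for the difference $z := u - w$, working on a partition of $\mathbb{R}$ into finitely many subintervals on each of which the perturbative Strichartz norm of $w$ is small. First I would write down the integral equation satisfied by $z$. Since $u$ solves \eqref{NLSV-d} and $w$ solves the $e$-perturbed equation, $z$ solves
\begin{equation*}
i\partial_t z + \Delta z - Vz = |u|^\alpha u - |w|^\alpha w - e(t,x), \qquad z(0) = u_0 - w_0,
\end{equation*}
so by Duhamel
\begin{equation*}
z(t) = e^{it(\Delta - V)}(u_0 - w_0) - i\int_0^t e^{i(t-s)(\Delta-V)}\big(|u|^\alpha u - |w|^\alpha w\big)(s)\,ds - i\int_0^t e^{i(t-s)(\Delta-V)} e(s)\,ds.
\end{equation*}
The first and third terms are controlled by $\varepsilon$ by hypothesis. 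For the nonlinear difference I would use the pointwise bound $\big||u|^\alpha u - |w|^\alpha w\big| \lesssim (|u|^\alpha + |w|^\alpha)|z|$, then estimate in $L^{q'}L^{r'}$ by Hölder (splitting the exponents so that $|z|$ lands in $L^pL^r$ and each factor $|u|^\alpha$, $|w|^\alpha$ lands in the appropriate Lebesgue space built from the admissible pair and the nonadmissible pair $(q,r)$), invoking the inhomogeneous estimate \eqref{str2.4} together with \eqref{fxc3}.

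\bigskip

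Next I would set up the partition. Since $\|w\|_{L^pL^r} \le M$, I choose $N = N(M)$ intervals $I_1, \dots, I_N$ with $\bigcup I_j = \mathbb{R}$ and $\|w\|_{L^p(I_j; L^r)} \le \delta$ for a small absolute $\delta$ to be fixed. On $I_1 = [0, T_1)$ the Duhamel formula for $z$ gives, schematically,
\begin{equation*}
\|z\|_{L^p(I_1;L^r)} \lesssim \varepsilon + \big(\|u\|_{L^p(I_1;L^r)}^\alpha + \delta^\alpha\big)\|z\|_{L^p(I_1;L^r)},
\end{equation*}
and writing $\|u\|_{L^p(I_1;L^r)} \le \|z\|_{L^p(I_1;L^r)} + \delta$, a continuity/bootstrap argument (the local theory ensures $\|z\|_{L^p(I_1;L^r)}$ is finite and depends continuously on the endpoint) yields $\|z\|_{L^p(I_1;L^r)} \le C_1 \varepsilon$ provided $\varepsilon$ and $\delta$ are small enough depending on the absolute constants. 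Then I would iterate: on $I_2$ the Duhamel formula starts from $z(T_1)$, so the "initial data" term becomes $\|e^{i(t-T_1)(\Delta-V)} z(T_1)\|_{L^p(I_2;L^r)} \lesssim \varepsilon + \|z\|_{L^p(I_1;L^r)} \lesssim (1 + C_1)\varepsilon$ by Strichartz applied to the already-controlled pieces, and the same bootstrap gives $\|z\|_{L^p(I_2;L^r)} \le C_2 \varepsilon$. After $N$ steps one gets $\|z\|_{L^pL^r} \le C(M)\varepsilon$ with $C(M)$ accumulating geometrically in $N$, and then $\|u\|_{L^pL^r} \le \|z\|_{L^pL^r} + M < \infty$, which also shows $u \in L^pL^r$.

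\bigskip

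The main technical obstacle is the bookkeeping of exponents: one must verify that Hölder in space-time actually closes with the triple $(p,r)$, $(q,r)$, and the admissible pairs \eqref{fxc1}–\eqref{fxc3}, i.e. that $\alpha$ worth of $u$ and $w$ factors can be absorbed into norms that are finite by the Strichartz estimates of \autoref{strichartz} while leaving exactly one power of $z$ in $L^pL^r$; this is the place where the specific choice $r = \alpha+2$ and the definitions of $p, q$ are used, and where the restriction $\tfrac4d < \alpha < \tfrac{4}{d-2}$ enters. The second delicate point is making the continuity argument rigorous — one needs that the map $T \mapsto \|z\|_{L^p([0,T];L^r)}$ is continuous and finite on the maximal interval, which follows from $w \in L^pL^r$ and the local well-posedness already granted once \eqref{disp-d} holds; given that, the bootstrap inequality $X \le A\varepsilon + B X^{1+\alpha} + B\delta^\alpha X$ with $X(0)$ small forces $X \lesssim \varepsilon$ on each subinterval. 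Everything else (the pointwise nonlinear estimate, the translation-invariance-free use of Strichartz since the propagator is $e^{it(\Delta-V)}$ throughout) is routine, which is why I would only sketch it and refer to \cite{BV, CZ, FXC} for the identical computation.
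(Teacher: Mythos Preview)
Your proposal is correct and is precisely the standard long-time perturbation argument to which the paper defers: the paper's own proof consists only of the sentence ``The proof is contained in \cite{FXC}, see Proposition~$4.7$, and it relies on \eqref{str2.4}'', and what you have sketched (Duhamel for $z=u-w$, the pointwise bound on the nonlinear difference, the partition of $\mathbb{R}$ into $N(M)$ subintervals on which $\|w\|_{L^pL^r}$ is small, bootstrap and iteration via \eqref{str2.4}) is exactly that proposition. Your identification of the exponent bookkeeping around the triple $(p,q,r)$ and the continuity of $T\mapsto\|z\|_{L^p([0,T];L^r)}$ as the only nontrivial points is accurate, and both are handled in the references you cite.
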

\begin{proof}
The proof is contained in \cite{FXC}, see Proposition $4.7,$ and it relies on \eqref{str2.4}.
\end{proof}

\section{Profile decomposition}\label{profile}

The main content of this section is the following profile decomposition theorem. \begin{theorem}\label{profiledec} Let $V(x)\in L^\infty$ satisfies:
$V\geq 0$, \eqref{periods}, \eqref{difflim2} with $a_{-}=0$ and $a_{+}=1,$ 
the dispersive relation \eqref{disp-d} and suppose that $|\nabla V(x_1,\bar x)|\rightarrow0$ as $|x_1|\to\infty$ uniformly in $\bar x\in\R^{d-1}.$ Given a bounded sequence $\{v_n\}_{n\in\mathbb{N}}\subset H^1,$ $\forall\, J\in\mathbb{N}$ and  $\forall\,1\leq j\leq J$ there exist two sequences $\{t_n^j\}_{n\in\mathbb{N}}\subset \R,\,\{x_n^j\}_{n\in\mathbb{N}}\subset\R^d$  and $\psi^j\in H^1$ such that, up to subsequences,
\begin{equation*}
v_n=\sum_{1\leq j\leq J}e^{it_n^j(\Delta - V)}\tau_{x_n^j}\psi^j+R_n^J
\end{equation*} 
with the following properties:
\begin{itemize}
\item for any fixed $j$ we have the following dichotomy for the time parameters $t_n^j$:
\begin{align*}
either\quad t_n^j=0\quad \forall\, n\in\mathbb{N}\quad &or \quad t_n^j\overset{n\to \infty}\longrightarrow\pm\infty;
\end{align*}
\item for any fixed $j$ we have the following scenarios for the space parameters $x_n^j=(x_{n,1}^j, \bar x_n^j)\in \R\times \R^{d-1}$:
\begin{equation*}
\begin{aligned}
 either& \quad  x_n^j=0\quad \forall\, n\in\mathbb{N}\\
 or &\quad |x_{n,1}^j|\overset{n\to \infty}\longrightarrow\infty\\
or \quad x_{n,1}^j=0, \quad \bar x_{n}^j=\sum_{l=2}^dk_{n,l}^j &P_l 
\quad  with \quad k_{n,l}^j\in \mathbb Z \quad and \quad  \sum_{l=2}^d |k_{n,l}^j|
\overset{n\to \infty}\longrightarrow\infty,
\\
\quad\qquad\hbox{ where } P_l \hbox{ are given in \eqref{periods}; }
\end{aligned}
\end{equation*}
\item (orthogonality condition) for any $j\neq k$
\begin{equation*}
|x_n^j-x_n^k|+|t_n^j-t_n^k| \overset{n\ra\infty}\longrightarrow\infty;
\end{equation*}
\item (smallness of the remainder) $\forall\,\varepsilon>0\quad\exists\,J=J(\varepsilon)$ such that 
\begin{equation*}  
\limsup_{n\ra\infty}\|e^{it(\Delta - V)}R_n^J\|_{L^{p}L^{r}}\leq\varepsilon;
\end{equation*}
\item by defining $\|v\|_{\mathcal H}^2=\int (|\nabla v|^2 + V |v|^2) dx$ we have, as $n\to\infty,$
\begin{align*}\,
\|v_n\|_{L^2}^2=&\sum_{1\leq j\leq J}\|\psi^j\|_{L^2}^2+\|R_n^J\|_{L^2}^2+o(1), \quad \forall\, J\in\mathbb{N},
\\
\|v_n\|^2_{\mathcal H}=&\sum_{1\leq j\leq J}\|\tau_{x_n^j}\psi^j\|^2_{\mathcal H}+\|R_n^J\|^2_{\mathcal H}+o(1), \quad \forall\, J\in\mathbb{N};
\end{align*}
\item $\forall\, J\in\mathbb{N}$\quad and\quad $\forall\,\,2<q<2^*$ we have, as $n\to\infty,$
\begin{equation*}
\|v_n\|_{L^q}^q=\sum_{1\leq j\leq J}\|e^{it_n^j(\Delta - V)}\tau_{x_n^j}\psi^j\|_{L^q}^q+\|R_n^J\|_{L^q}^q+o(1);\\
\end{equation*}
\item with $E(v)=\frac{1}{2}\int \big(|\nabla v|^2+V|v|^2+\frac{2}{\alpha+2}|v|^{\alpha+2}\big)dx,$ we have, as $n\to\infty,$
\begin{equation}\label{energypd}
E(v_n)=\sum_{1\leq j\leq J}E(e^{it_n^j(\Delta - V)}\tau_{x_n^j}\psi^j)+E(R_n^j)+o(1),
\quad \forall\, J\in\mathbb{N}.
\end{equation}
\end{itemize}
\end{theorem}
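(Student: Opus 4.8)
The plan is to follow the now-classical Keraani-type construction (as in \cite{KM1, KM2} and its adaptations to perturbed equations, e.g. \cite{BV, FXC}), extracting profiles iteratively by exploiting the defect of compactness of the Strichartz embedding $H^1 \hookrightarrow L^p L^r$ associated with the propagator $e^{it(\Delta-V)}$. First I would establish the basic \emph{inverse Strichartz estimate}: if $\{v_n\}$ is bounded in $H^1$ and $\limsup_n \|e^{it(\Delta-V)}v_n\|_{L^pL^r} = \delta > 0$, then there exist parameters $t_n \in \R$, $x_n \in \R^d$ and a nonzero profile $\psi \in H^1$ such that $e^{-it_n(\Delta-V)}\tau_{-x_n} v_n \rightharpoonup \psi$ weakly in $H^1$, with a quantitative lower bound $\|\psi\|_{H^1} \gtrsim \delta^{\theta}\,(\text{sup-norm bound})^{-\theta'}$ for suitable exponents. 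This is where the dispersive estimate \eqref{disp-d} enters: it upgrades to the full Strichartz pairs \eqref{fxc1}--\eqref{fxc3} and, via interpolation between a Strichartz norm and a negative Sobolev/Besov norm, converts the nonvanishing of the $L^pL^r$ norm into nonvanishing of some $e^{it(\Delta-V)}v_n$ at a point in spacetime, which in turn forces weak non-triviality after recentering.

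The key structural point — and the main obstacle — is to determine the correct \emph{trichotomy} for the spatial translations $x_n^j = (x_{n,1}^j, \bar x_n^j)$. Here one cannot appeal to translation invariance of the equation, since $V$ is neither translation invariant nor decaying. The idea is to analyze the operator $e^{-it_n(\Delta-V)}\tau_{-x_n}$ acting on $v_n$ by examining the asymptotic behavior of the conjugated potential $\tau_{-x_n} V = V(\cdot + x_n)$. Along a subsequence exactly one of three things happens: (i) $x_n$ stays bounded, hence (up to subsequence) converges and we may take $x_n = 0$; (ii) $|x_{n,1}^j| \to \infty$, in which case, because $V(x_1,\bar x) \to a_\pm$ as $x_1 \to \pm\infty$ uniformly in $\bar x$ and $|\nabla V| \to 0$ in $x_1$, the translated propagator converges strongly to the \emph{free} propagator $e^{it\Delta}$ (if $x_{n,1}^j \to -\infty$, since $a_- = 0$) or to $e^{it(\Delta-1)}$ (if $x_{n,1}^j \to +\infty$, since $a_+=1$) — this is the mechanism producing the two scattering channels; (iii) $x_{n,1}^j$ stays bounded (so WLOG $=0$) but $|\bar x_n^j| \to \infty$, in which case one first reduces $\bar x_n^j$ modulo the period lattice $\sum \mathbb{Z}P_l$ using \eqref{periods} — the fractional part converges and is absorbed into the profile, the integer part $\sum k_{n,l}^j P_l$ diverges and the periodicity of $V$ guarantees $\tau_{-\sum k_{n,l}^j P_l} V = V$ exactly, so the propagator is unchanged. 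Making these convergences precise in the strong operator topology on $H^1$ (and on the relevant Strichartz spaces, via a density argument and \eqref{disp-d}) is the technical heart of the argument.

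With the inverse estimate and the trichotomy in hand, I would run the standard iteration: set $R_n^0 = v_n$; given $R_n^{J-1}$, if its limiting Strichartz norm is below $\eps$ we stop, otherwise apply the inverse estimate to extract $(t_n^J, x_n^J, \psi^J)$ and set $R_n^J = R_n^{J-1} - e^{it_n^J(\Delta-V)}\tau_{x_n^J}\psi^J$. The dichotomy $t_n^j \equiv 0$ or $t_n^j \to \pm\infty$ is obtained by passing to a subsequence and, in the case of a bounded nonzero limit, absorbing $e^{it_n^j(\Delta-V)}$ into the profile. The orthogonality $|x_n^j - x_n^k| + |t_n^j - t_n^k| \to \infty$ for $j \neq k$ follows from the weak-convergence construction exactly as in Keraani's argument, using that $e^{-it_n^k(\Delta-V)}\tau_{-x_n^k} R_n^{k} \rightharpoonup 0$. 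The Pythagorean expansions of $\|\cdot\|_{L^2}^2$ and $\|\cdot\|_{\mathcal H}^2$ come from weak convergence together with this asymptotic orthogonality of the parameters (for the $\mathcal H$-norm one uses additionally that $\tau_{x_n^j}$ conjugates $V$ to its limit in each of the three regimes, so the cross terms vanish); the $L^q$ expansion for $2 < q < 2^*$ follows from a Brezis--Lieb / Rellich argument combined with the refined Sobolev/Strichartz inequality that makes $\|R_n^J\|_{L^q} \to 0$ in the limit $J \to \infty$ once the Strichartz norm is small; and the energy expansion \eqref{energypd} is then just the combination of the $\mathcal H$ and $L^{\alpha+2}$ expansions. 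The smallness of the remainder in $L^pL^r$ is exactly the stopping criterion of the iteration, and the finiteness of the process (so that $J(\eps)$ exists) follows because each extracted profile carries off a definite amount of the conserved $H^1$-mass via the quantitative lower bound in the inverse estimate, while the total is bounded.
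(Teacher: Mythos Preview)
Your proposal is correct and follows essentially the same route as the paper: an iterative single-profile extraction lemma (with the trichotomy on $x_n$ handled exactly as you describe, via the pointwise limits of $\tau_{-x_n}V$ and the periodicity in $\bar x$), followed by the standard Keraani/Banica--Visciglia iteration and the Brezis--Lieb/weak-convergence arguments for the Pythagorean expansions. Two small points worth aligning with the paper: first, the extraction there is phrased through the $L^\infty_t L^q_x$ norm (choose $t_n$ nearly maximizing $\|e^{it(\Delta-V)}v_n\|_{L^q}$, then apply a concentration lemma of Lions type) rather than directly through the $L^pL^r$ Strichartz norm, the passage to $L^pL^r$-smallness of the remainder coming afterwards by interpolation; second, the correct operator order is $\tau_{-x_n}e^{-it_n(\Delta-V)}v_n \rightharpoonup \psi$ (equivalently $v_n = e^{it_n(\Delta-V)}\tau_{x_n}\psi + \cdots$), not $e^{-it_n(\Delta-V)}\tau_{-x_n}v_n$, since $\tau_{x_n}$ and the propagator do not commute --- your conjugated-potential analysis is in fact the computation of $\tau_{-x_n}e^{it(\Delta-V)}\tau_{x_n} = e^{it(\Delta-\tau_{-x_n}V)}$, which requires the paper's ordering to match the stated form of the decomposition.
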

First we prove the following lemma. 
\begin{lemma}\label{lemmapreli} 
Given a bounded sequence $\{v_n\}_{n\in\N}\subset H^1(\R^d)$ we define
\begin{equation*}
\Lambda =\left\{ w\in L^2\quad |\quad \exists \{x_k\}_{k\in\N}\quad and \quad \{n_k\}_{k\in\N}\quad\text{s. t. }\quad \tau_{x_k}v_{n_k}\overset{L^2}\rightharpoonup w\right\}
\end{equation*}
and
\begin{equation*}
\lambda=\sup\{\|w\|_{L^2},\quad w\in\Lambda\}.
\end{equation*}
Then for every $q\in(2,2^*)$ there exists a constant $M=M(\sup_n\|v_n\|_{H^1})>0$ and an exponent $e=e(d,q)>0$ such that 
\begin{equation*}
\limsup_{n\to\infty}\|v_n\|_{L^q}\leq M\lambda^e.
\end{equation*}
\end{lemma}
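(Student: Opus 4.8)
The plan is to prove a bubble-decomposition / vanishing-type lemma in the spirit of the standard profile decomposition machinery, adapting it to the $L^2$-weak-limit quantity $\lambda$ rather than to an $L^\infty$-based defect. First I would fix $q\in(2,2^*)$ and reduce matters to an interpolation inequality: by Sobolev embedding and boundedness of $\{v_n\}$ in $H^1$, it suffices to bound $\limsup_n \|v_n\|_{L^q}$ by $\lambda$ raised to a positive power, and the natural route is to interpolate $L^q$ between $L^2$ and some higher Lebesgue or Besov space. The cleanest implementation uses a Gagliardo--Nirenberg-type estimate in terms of a negative-regularity (or $\dot B^{0}_{2,\infty}$-flavored) norm: one shows $\|v_n\|_{L^q}\lesssim \|v_n\|_{H^1}^{1-\theta}\, N(v_n)^{\theta}$, where $N(v_n)$ measures the size of the largest $L^2$-mass that can be extracted by a space translation, and then identifies $\limsup_n N(v_n)$ with $\lambda$.

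The key steps, in order, would be: (1) For a bounded sequence in $H^1$, record the compact-Sobolev fact that on any fixed ball translation-compactness holds, so $\tau_{x_k}v_{n_k}$ has an $L^2$-weakly convergent subsequence; this makes $\Lambda$ nonempty and $\lambda<\infty$ (indeed $\lambda\le \sup_n\|v_n\|_{L^2}$). (2) Prove the quantitative interpolation inequality: split $\hat v_n$ into low and high frequencies at a parameter $R$, estimate the high-frequency part in $L^q$ by $R^{-s}\|v_n\|_{H^1}$ for a suitable $s>0$ via Bernstein/Sobolev, and estimate the low-frequency part in $L^q$ by interpolating between its $L^2$ norm and its $L^\infty$ norm, the latter controlled by $R^{d/2}$ times its $L^2$ norm (Bernstein again). (3) Show that $\sup_{x}\|\chi_R(\cdot - x)\, v_n\|_{L^2}$ (a localized mass, with $\chi_R$ a bump at scale depending on $R$) has $\limsup_n$ bounded by $\lambda$: if not, one could find translations $x_n$ and a subsequence along which $\tau_{x_n}v_n$ carries $L^2$-mass bounded below by something exceeding $\lambda$ on a fixed ball, extract a weak limit $w$, and by weak lower semicontinuity of the $L^2$ norm on the ball get $\|w\|_{L^2}>\lambda$, contradicting the definition of $\lambda$ (here one uses that weak convergence plus local compactness upgrades to strong $L^2$ convergence on compact sets, so no mass is lost). (4) Combine (2) and (3): choosing $R$ to balance the two terms produces $\limsup_n\|v_n\|_{L^q}\le M\lambda^e$ with $e=e(d,q)>0$ and $M$ depending only on $\sup_n\|v_n\|_{H^1}$.

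The main obstacle I anticipate is step (3): making rigorous the passage from "large localized $L^2$-mass somewhere" to "a weak limit of mass exceeding $\lambda$." One must be careful that the translated sequence's weak limit is genuinely in the set $\Lambda$ — this requires that the localization cutoff does not destroy the weak limit, which is handled by noting that on a fixed compact set $H^1\hookrightarrow\hookrightarrow L^2$, so $\tau_{x_n}v_n \to w$ strongly in $L^2_{loc}$ along the subsequence, hence $\|\chi_R w\|_{L^2}=\lim_n\|\chi_R \tau_{x_n}v_n\|_{L^2}$ and also $\|w\|_{L^2}\ge \|\chi_R w\|_{L^2}$. A secondary technical point is getting the exponent $e$ and constant $M$ to depend only on the stated quantities; this just requires tracking the interpolation exponents carefully and is routine. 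Note that the potential $V$ plays no role in this lemma — it is a pure statement about bounded $H^1$ sequences — so no hypothesis on $V$ is needed here.
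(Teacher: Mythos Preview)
Your overall architecture---split $v_n$ into high and low frequencies at scale $R$, kill the high-frequency piece in $L^q$ by Sobolev and the $H^1$ bound, control the low-frequency piece through $\lambda$, then optimize in $R$---is exactly the paper's. The one place where your writeup does not quite close is the link between your steps (2) and (3). As stated, step (2) invokes Bernstein to get $\|P_{\le R}v_n\|_{L^\infty}\lesssim R^{d/2}\|v_n\|_{L^2}$, which carries the \emph{full} $L^2$ norm, not $\lambda$; and your step (3) bound $\limsup_n\sup_x\|\chi_R(\cdot-x)v_n\|_{L^2}\le\lambda$ (which is correct, via Rellich and the definition of $\Lambda$) does not obviously feed back into that Bernstein estimate, because the convolution kernel $\mathcal F^{-1}\zeta_R$ is Schwartz rather than compactly supported. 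One can make the localized-mass route work by splitting the kernel into a core and a rapidly decaying tail, but this is an unnecessary detour.

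The paper closes the gap more directly and avoids localized mass altogether: choose $y_n$ with $\|\zeta_R(|D|)v_n\|_{L^\infty}\le 2|\zeta_R(|D|)v_n(y_n)|$, and observe that
\[
\zeta_R(|D|)v_n(y_n)=R^d\int \eta(Rz)\,(\tau_{-y_n}v_n)(-z)\,dz,\qquad \eta=\mathcal F^{-1}\zeta,
\]
is a pairing of the translate $\tau_{-y_n}v_n$ against the \emph{fixed} $L^2$ function $R^d\eta(R\cdot)$, whose $L^2$ norm is $\sim R^{d/2}$. Passing to a subsequence with $\tau_{-y_n}v_n\rightharpoonup w\in\Lambda$ in $L^2$ and applying Cauchy--Schwarz gives $\limsup_n|\zeta_R(|D|)v_n(y_n)|\lesssim R^{d/2}\|w\|_{L^2}\le R^{d/2}\lambda$. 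This is where $\lambda$ enters, with no appeal to local compactness or cutoffs. After that, interpolating $L^q$ between $L^2$ and $L^\infty$ on the low-frequency piece and optimizing $R=\lambda^{-\beta}$ gives the exponent $e$. Your remark that $V$ plays no role here is correct.
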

\begin{proof}
We consider a Fourier multiplier $\zeta$ where $\zeta$ is defined as
\begin{equation*}
C^{\infty}_c(\R^d;\R)\ni\zeta(\xi)=
\begin{cases}
1 & \text{if}\quad |\xi|\leq1\\
0 & \text{if}\quad |\xi|>2
\end{cases}.
\end{equation*} 
By setting $\zeta_R(\xi)=\zeta(\xi/R),$ we define the pseudo-differential operator with symbol $\zeta_R,$ classically given by $\zeta_R(|D|)f=\mathcal F^{-1}(\zeta_R\mathcal Ff)(x)$ and similarly we define the operator  $\tilde{\zeta}_R(|D|)$ with the associated symbol given by $\tilde{\zeta}_R(\xi)=1-\zeta_R(\xi).$ Here by $\mathcal F,\mathcal F^{-1}$ we mean the Fourier transform operator and its inverse, respectively. For any $q\in(2,2^*)$ there exists a $\epsilon\in(0,1)$ such that $H^\epsilon\hookrightarrow L^{\frac{2d}{d-2\epsilon}}=:L^q.$ Then
\begin{equation*}
\begin{aligned}
\|\tilde{\zeta}_R(|D|)v_n\|_{L^q}&\lesssim \|\langle\xi\rangle^\epsilon\tilde{\zeta}_R(\xi)\hat{v}_n\|_{L^2_\xi}\\
&= \|\langle\xi\rangle^{\epsilon-1}\langle\xi\rangle\tilde{\zeta}_R(\xi)\hat{v}_n\|_{L^2_\xi}\\
&\lesssim R^{-(1-\epsilon)}
\end{aligned}
\end{equation*}
where we have used the boundedness of $\{v_n\}_{n\in\mathbb N}$ in $H^1$ at the last step.

For the localized part we consider instead a sequence $\{y_n\}_{n\in\mathbb N}\subset\R^d$ such that 
\begin{equation*}
\|\zeta_R(|D|)v_n\|_{L^\infty}\leq2|\zeta_R(|D|)v_n(y_n)|
\end{equation*}
and we have that up to subsequences, by using the well-known properties $\mathcal F^{-1}(fg)=\mathcal F^{-1}f\ast\mathcal F^{-1}g$ and $\mathcal F^{-1}\left(f\left(\frac{\cdot}{r}\right)\right)=r^d(\mathcal F^{-1}f)(r\cdot),$ 
\begin{equation*}
\limsup_{n\to\infty}|\zeta_R(|D|)v_n(y_n)|=R^d\limsup_{n\to\infty}\left|\int\eta(Rx)v_n(x-y_n)\,dx\right|\lesssim R^{d/2}\lambda\\
\end{equation*}
where we denoted $\eta=\mathcal F^{-1}\zeta$ and we used Cauchy-Schwartz inequality. Given $\theta\in(0,1)$ such that $\frac1q=\frac{1-\theta}{2},$ by interpolation follows that
\begin{equation*}
\|\zeta_R(|D|)v_n\|_{L^q}\leq\|\zeta_R(|D|)v_n\|^{\theta}_{L^\infty}\|\zeta_R(|D|)v_n\|^{1-\theta}_{L^2}\lesssim R^{\frac{d\theta}{2}}\lambda^{\theta}
\end{equation*}
 
\begin{equation*}
\limsup_{n\to\infty}\|v_n\|_{L^q}\lesssim \left(R^{\frac{d\theta}{2}}\lambda^{\theta}+R^{-1+\epsilon}\right) 
\end{equation*}
and the proof is complete provided we select as radius $R=\lambda^{-\beta}$ with $0<\beta=\theta\left(1-\epsilon+\frac{d\theta}{2}\right)^{-1}$ and so $e=\theta(1-\epsilon)\left(1-\epsilon+\frac{d\theta}{2}\right)^{-1}.$
\end{proof}

Based on the previous lemma we can prove the following result.

\begin{lemma}
Let $\{v_n\}_{n\in \N}$ be a bounded sequence in $H^1(\R^d).$ There exists, up to subsequences, a function $\psi\in H^1$ and two sequences  $\{t_n\}_{n\in \N}\subset\R,$ $\{x_n\}_{n\in \N}\subset\R^d$ such that 
\begin{equation}\label{ex}
\tau_{-x_n}e^{it_n(\Delta-V)}v_n=\psi+W_n,
\end{equation}
where the following conditions are satisfied:
\begin{equation*}
W_n\overset{H^1}\rightharpoonup0,
\end{equation*}
\begin{equation*}
\limsup_{n\to\infty}\|e^{it(\Delta-V)}v_n\|_{L^\infty L^q}\leq C\left(\sup_n\|v_n\|_{H^1}\right)\|\psi\|_{L^2}^e
\end{equation*}
with the exponent $e>0$ given in \autoref{lemmapreli}. Furthermore, as $n\to \infty,$ $v_n$ fulfills the Pythagorean expansions below: 
\begin{equation}\label{gasp1}
\|v_n\|_{L^2}^2=\|\psi\|_{L^2}^2+\|W_n\|_{L^2}^2+o(1);
\end{equation}
\begin{equation}\label{gasp2}
\|v_n\|_{\mathcal H}^2=\|\tau_{x_n}\psi\|_{\mathcal H}^2+\|\tau_{x_n}W_n\|_{\mathcal H}^2+o(1);
\end{equation}
\begin{equation}\label{gasp3}
\|v_n\|_{L^q}^q=\|e^{it_n(\Delta-V)}\tau_{x_n}\psi\|_{L^q}^q+\|e^{it_n(\Delta-V)}\tau_{x_n}W_n\|_{L^q}^q+o(1),\qquad q\in(2,2^*).
\end{equation}
Moreover we have the following dichotomy for the time parameters $t_n$:
\begin{align}\label{parav}
either \quad t_n=0\quad \forall\, n\in\mathbb{N}\quad &or \quad t_n\overset{n\to\infty}\longrightarrow\pm\infty.
\end{align}
\item Concerning the space parameters $x_n=(x_{n,1}, \bar x_n)\in \R\times \R^{d-1}$ we have the following scenarios:
\begin{align}\label{para2v}
& either \quad  x_n=0\quad \forall\, n\in\mathbb{N}\\
\nonumber & or \quad |x_{n,1}|\overset{n\to \infty}\longrightarrow\infty\\
\nonumber or \quad x_{n,1}=0, \quad \bar x_{n}^j=\sum_{l=2}^dk_{n,l} P_l 
\quad & with \quad k_{n,l}\in \mathbb Z \quad and \quad  \sum_{l=2}^d |k_{n,l}|
\overset{n\to \infty}\longrightarrow\infty.
\end{align}
\end{lemma}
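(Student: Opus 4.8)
The statement is a ``single-bubble'' (or ``inverse Strichartz'') profile extraction, of which \autoref{profiledec} is the iterated version; the plan is to derive it directly from \autoref{lemmapreli}. First I would start from a bounded sequence $\{v_n\}\subset H^1$. Applying \autoref{lemmapreli}, $\limsup_n\|e^{it(\Delta-V)}v_n\|_{L^\infty L^q}\gtrsim M\lambda^e$ is controlled below by $\lambda$, the supremum of $L^2$-norms of weak limits of spatial translates. So I pick, for each $n$, a time $t_n$ and a point $x_n$ that nearly realise the $L^\infty_t L^q_x$ norm of $e^{it(\Delta-V)}v_n$ (first taking a time slice, then using the argument in \autoref{lemmapreli} to locate the bulk of the $L^q$-mass at a spatial point $y_n$, and absorbing $y_n$ into $x_n$). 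After passing to a subsequence, $\tau_{-x_n}e^{it_n(\Delta-V)}v_n$ converges weakly in $H^1$ to some $\psi$; set $W_n:=\tau_{-x_n}e^{it_n(\Delta-V)}v_n-\psi$, which by construction satisfies $W_n\rightharpoonup0$ in $H^1$. The quantitative bound $\limsup_n\|e^{it(\Delta-V)}v_n\|_{L^\infty L^q}\le C(\sup_n\|v_n\|_{H^1})\|\psi\|_{L^2}^e$ then follows because the near-maximal value of the $L^q$ norm, evaluated along the translated/time-shifted sequence, is witnessed by the weak limit $\psi$ exactly as in the proof of \autoref{lemmapreli}: the localized piece $\zeta_R(|D|)(\tau_{-x_n}e^{it_n(\Delta-V)}v_n)$ evaluated near the origin tends to an integral against $\psi$.

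\textbf{The dichotomies for $t_n$ and $x_n$.} Next I would normalise the parameters. For the time parameter, if $\{t_n\}$ has a bounded subsequence, pass to it and let $t_n\to t_\infty$; replacing $\psi$ by $e^{it_\infty(\Delta-V)}\psi$ (a unitary operation preserving $H^1$, $L^2$ and the weak limit structure) and resetting $t_n:=0$ gives the first alternative in \eqref{parav}; otherwise $|t_n|\to\infty$ along a subsequence, and passing to a further subsequence fixes the sign, giving $t_n\to\pm\infty$. For the space parameter I would use the structure of $V$: write $x_n=(x_{n,1},\bar x_n)$. If $\{|x_{n,1}|\}$ is unbounded, pass to a subsequence with $|x_{n,1}|\to\infty$ (second alternative in \eqref{para2v}). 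If $\{|x_{n,1}|\}$ is bounded, then because $e^{it(\Delta-V)}$ does not commute with translations in general one must be careful: I would absorb the bounded quantity $x_{n,1}$ into a shift of $\psi$ in the $x_1$-direction (legitimate since this is a bounded translation, hence along a subsequence convergent, and does not affect weak convergence) and reduce to $x_{n,1}=0$. For the remaining $\bar x_n\in\R^{d-1}$, I would decompose it modulo the period lattice $\mathbb Z P_2\oplus\cdots\oplus\mathbb Z P_d$ as $\bar x_n=\sum_l k_{n,l}P_l+\rho_n$ with $\rho_n$ in a fixed fundamental domain; absorbing the bounded remainder $\rho_n$ into $\psi$ again, the periodicity \eqref{periods} of $V$ guarantees $\tau_{\sum k_{n,l}P_l}$ commutes with $e^{it(\Delta-V)}$ (at least up to the identification of $V$ with its translate), so either $\sum_l|k_{n,l}|$ stays bounded, in which case it is eventually constant along a subsequence and absorbed to give $x_n=0$, or $\sum_l|k_{n,l}|\to\infty$, which is the third alternative in \eqref{para2v}.

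\textbf{The Pythagorean expansions.} Finally I would prove \eqref{gasp1}--\eqref{gasp3}. The $L^2$-expansion \eqref{gasp1} is immediate: $e^{it_n(\Delta-V)}$ and $\tau_{-x_n}$ are $L^2$-isometries, so $\|v_n\|_{L^2}^2=\|\psi+W_n\|_{L^2}^2=\|\psi\|_{L^2}^2+\|W_n\|_{L^2}^2+2\RE(\psi,W_n)_{L^2}$ and the cross term vanishes since $W_n\rightharpoonup0$ weakly in $H^1$, hence in $L^2$ on bounded sets / against the fixed $\psi$. The $\mathcal H$-expansion \eqref{gasp2} requires more care because $\|\cdot\|_{\mathcal H}$ is not translation-invariant; here the identity $\|v_n\|_{\mathcal H}=\|e^{it_n(\Delta-V)}v_n\|_{\mathcal H}$ (the flow commutes with $-\Delta+V$, hence preserves the energy norm) reduces matters to $\|e^{it_n(\Delta-V)}v_n\|_{\mathcal H}^2=\|\tau_{x_n}\psi+\tau_{x_n}W_n\|_{\mathcal H}^2$, and expanding the square the cross term is $2\RE(\tau_{x_n}\psi,\tau_{x_n}W_n)_{\mathcal H}=2\RE(\nabla\psi,\nabla W_n)_{L^2}+2\RE\int V(\cdot+x_n)\psi\overline{W_n}$; the first summand $\to0$ since $W_n\rightharpoonup0$ in $H^1$, and the second requires showing $V(\cdot+x_n)\psi$ converges strongly in $L^2$ (to $0$ or to a translate of $V\psi$, depending on the scenario) — this uses $V\in L^\infty$, the decay/limit behavior of $V$ in the $x_1$-direction, and in the periodic scenario the fact that $V(\cdot+x_n)=V$ exactly. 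The $L^q$-expansion \eqref{gasp3} is the standard Br\'ezis--Lieb lemma applied to $\tau_{-x_n}e^{it_n(\Delta-V)}v_n=\psi+W_n$, combined with the commutation identities already used, noting $2<q<2^*$ gives the needed compact Sobolev embedding on bounded sets to upgrade weak convergence of $W_n$ to a.e.\ convergence along a further subsequence.

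\textbf{Main obstacle.} I expect the delicate point to be the $\mathcal H$-expansion \eqref{gasp2} in the scenario $|x_{n,1}|\to\infty$: one must verify that $V(\cdot+x_n)|\psi|^2$ and $V(\cdot+x_n)\psi\overline{W_n}$ behave correctly despite $V$ not decaying (it tends to $0$ as $x_1\to-\infty$ but to $1$ as $x_1\to+\infty$), so that the limiting energy norm picks up either $\|\psi\|_{\dot H^1}^2$ (the $x_1\to-\infty$ channel, where $V\to0$) or $\|\psi\|_{\dot H^1}^2+\|\psi\|_{L^2}^2$ (the $x_1\to+\infty$ channel, where $V\to1$); this is exactly the mechanism that later produces the two free channels $e^{it\partial_x^2}$ and $e^{it(\partial_x^2-1)}$, and getting the bookkeeping right here — using assumption \eqref{difflim2} together with $|\nabla V|\to0$ as $|x_1|\to\infty$ — is the crux of the lemma.
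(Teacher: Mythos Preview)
Your extraction of $\psi$, the parameter dichotomies, and the expansions \eqref{gasp1}--\eqref{gasp2} follow the paper's argument essentially verbatim, including the treatment of the cross term $\int V(\cdot+x_n)\psi\overline{W_n}$ via the pointwise limits of $V$ in the $x_1$-direction and periodicity in $\bar x$.

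There is, however, a genuine gap in your treatment of \eqref{gasp3}. Br\'ezis--Lieb applied to $\psi+W_n$ yields
\[
\|\tau_{-x_n}e^{it_n(\Delta-V)}v_n\|_{L^q}^q=\|\psi\|_{L^q}^q+\|W_n\|_{L^q}^q+o(1),
\]
which is a decomposition of $\|e^{it_n(\Delta-V)}v_n\|_{L^q}^q$, \emph{not} of $\|v_n\|_{L^q}^q$. The propagator $e^{it_n(\Delta-V)}$ is an isometry on $L^2$ and on $\mathcal H$ (the ``commutation identities'' you invoke), but it is \emph{not} an $L^q$-isometry for $q\neq2$, so there is no way to transfer your identity back to $v_n$. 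The paper's proof of \eqref{gasp3} therefore proceeds by a genuine case analysis that you have not supplied: when $|t_n|\to\infty$, the dispersive estimate \eqref{disp-d} forces $\|e^{\pm it_n(\Delta-V)}\tau_{x_n}\psi\|_{L^q}\to0$, so the profile contribution vanishes and \eqref{gasp3} is trivial; when $t_n\to t^*$ and $|x_n|\to\infty$, one must show that $\tau_{-x_n}e^{it^*(\Delta-V)}\tau_{x_n}\psi\to\psi^*$ in $H^1$ for some $\psi^*$, and this is done by writing the difference $e^{it^*(\Delta-V)}\tau_{x_n}\psi-e^{it^*\Delta}\tau_{x_n}\psi$ (or with $\Delta-1$ in place of $\Delta$ if $x_{n,1}\to+\infty$) via Duhamel and using that $(\tau_{-x_n}V)e^{is\Delta}\psi\to0$ in $H^1$ by the hypotheses on $V$. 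Only after this comparison can one apply Br\'ezis--Lieb at the level of $v_n$.

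In short, you have misidentified the crux: \eqref{gasp2} is handled exactly by the argument you sketch, while \eqref{gasp3} --- specifically the case of bounded $t_n$ and escaping $x_n$, where the perturbed flow must be compared to a translation-commuting free flow --- is the step that requires the structure of $V$ in an essential way and that your ``commutation identities'' do not cover.
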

\begin{proof}
Let us choose a sequences of times $\{t_n\}_{n\in\mathbb N}$ such that 
\begin{equation}\label{time}
\|e^{it_n(\Delta-V)}v_n\|_{L^q}>\frac12\|e^{it(\Delta-V)}v_n\|_{L^\infty L^q}.
\end{equation}
According to \autoref{lemmapreli} 
we can consider a sequence of space translations such that 
\begin{equation*}
\tau_{-x_n}(e^{it_n(\Delta-V)}v_n)\overset{H^1}\rightharpoonup \psi,
\end{equation*}
which yields \eqref{ex}. Let us remark that the choice of the time sequence in \eqref{time} is possible since the norms $H^1$ and $\mathcal H$ are equivalent. 
Then 
\begin{equation*}
\limsup_{n\to\infty}\|e^{it_n(\Delta-V)}v_n\|_{L^q}\lesssim\|\psi\|_{L^2}^e,
\end{equation*}
which in turn implies by \eqref{time} that 
\begin{equation*}
\limsup_{n\to\infty}\|e^{it(\Delta-V)}v_n\|_{L^\infty L^q}\lesssim\|\psi\|_{L^2}^e,
\end{equation*}
where the exponent is the one given in \autoref{lemmapreli}. By definition of $\psi$ we can write 
\begin{equation}\label{dec2}
\tau_{-x_n}e^{it_n(\Delta-V)}v_n=\psi+W_n,\qquad W_n\overset{H^1}\rightharpoonup 0
\end{equation}
and the Hilbert structure of $L^2$ gives \eqref{gasp1}.\\

Next we prove \eqref{gasp2}. We have 
\begin{equation*}
v_n=e^{-it_n(\Delta-V)}\tau_{x_n}\psi+e^{-it_n(\Delta-V)}\tau_{x_n}W_n,\qquad W_n\overset{H^1}\rightharpoonup 0
\end{equation*}
and we conclude provided that we show
\begin{equation}\label{gasp5}(e^{-it_n(\Delta-V)}\tau_{x_n}\psi, e^{-it_n(\Delta-V)}\tau_{x_n}W_n)_{\mathcal H}\overset{n\rightarrow \infty} \longrightarrow 0.
\end{equation}
Since we have
\begin{align*}
&(e^{-it_n(\Delta-V)}\tau_{x_n}\psi, e^{-it_n(\Delta-V)}\tau_{x_n}W_n)_{\mathcal H}\\
&=(\psi, W_n)_{\dot{H}^1}+\int V(x+x_n)\psi(x)\bar{W}_n(x)\,dx
\end{align*}
and $W_n\overset{H^1}\rightharpoonup 0
$,
it is sufficient to show that 
\begin{equation}\label{gasp4}
\int V(x+x_n)\psi(x)\bar{W}_n(x)\,dx
\overset{n\rightarrow \infty} \longrightarrow 0.
\end{equation}
If (up to subsequence) $x_n\overset{n\to \infty}\longrightarrow x^*\in\R^d$ or  $|x_{n,1}|\overset{n\to \infty}\longrightarrow\infty$,
where we have splitted $x_n=(x_{n,1}, \bar x_n)\in \R\times \R^{d-1}$,  
then we have that the sequence $\tau_{-x_n}V (x)=V(x+x_n)$ pointwise converges to the function $\tilde{V}(x)\in L^{\infty}$ defined by 
\begin{equation*}
\tilde{V}(x)=
\left\{ \begin{array}{ll}
1\quad &if\quad x_{n,1}\overset{n\ra \infty}\longrightarrow+\infty\\
V(x+x^*)\quad &if\quad x_n\overset{n\ra \infty}\longrightarrow x^*\in\R^d\\
0\quad &if\quad x_{n,1}\overset{n\ra \infty}\longrightarrow-\infty
\end{array} \right.
\end{equation*}
and hence
\begin{equation*}
\begin{aligned}
\int V(x+x_n)\psi(x)\bar{W}_n(x)\,dx&=\int[V(x+x_n)-\tilde{V}(x)]\psi(x)\bar{W}_n(x)\,dx\\
&\quad+\int \tilde{V}(x)\psi(x)\bar{W}_n(x)\,dx.
\end{aligned}
\end{equation*}
The function $\tilde{V}(x)\psi(x)$ belongs to $L^2$ since $\tilde{V}$ is bounded and $\psi\in H^1$, and since $W_n\rightharpoonup0$ in $H^1$ (and then in $L^2$) we have that 
\begin{equation*}
\int \tilde{V}(x)\psi(x)\bar{W}_n(x)\,dx\overset{n\ra \infty}\longrightarrow0.
\end{equation*}
Moreover by using Cauchy-Schwartz inequality
\begin{align*}
\bigg|\int[V(x+x_n)-\tilde{V}(x)]\psi(x)\bar{W}_n(x)\,dx\bigg|\leq&\sup_{n}\|W_n\|_{L^2}\|[V(\cdot+x_n)-\tilde{V}(\cdot)]\psi(\cdot)\|_{L^2};
\end{align*}
since $\left|[V(\cdot+x_n)-\tilde{V}(\cdot)]\psi(\cdot) \right|^2\lesssim|\psi(\cdot)|^2\in L^1$ we claim, by dominated convergence theorem, that also 
\begin{equation*}
\int[V(x+x_n)-\tilde{V}(x)]\psi(x)\bar{W}_n(x)\,dx\overset{n\ra \infty}\longrightarrow0,
\end{equation*}
and we conclude \eqref{gasp4} and hence \eqref{gasp5}.
It remains to prove \eqref{gasp5} in the case when, up to subsequences, $x_{n,1}
\overset{n\rightarrow \infty}
\longrightarrow x_1^*$ and $|\bar x_n|\overset {n\rightarrow \infty}
\longrightarrow \infty$. Up to subsequences we can assume therefore that $\bar x_{n}= \bar x^*+\sum_{l=2}^d k_{n, l}P_l+o(1)$ 
with $\bar x^*\in \R^{d-1}$, $k_{n, l}\in \mathbb Z$ and
$\sum_{l=2}^d |k_{n,l}|\overset{n\rightarrow \infty} \longrightarrow \infty.$ Then by using the 
periodicity of the potential $V$ w.r.t. the $(x_2,\dots, x_d)$ variables we get:
\begin{equation*}
\begin{aligned}
&(e^{-it_n(\Delta-V)}\tau_{x_n}\psi, e^{-it_n(\Delta-V)}\tau_{x_n}W_n)_{\mathcal H}=\\
&(e^{-it_n(\Delta-V)}\tau_{(x_1^*,\bar x_n)}\psi, e^{-it_n(\Delta-V)}\tau_{(x_1^*,\bar x_n)}W_n)_{\mathcal H}+o(1)=\\
&(\tau_{(x_1^*,\bar x^*)}\psi, \tau_{(x_1^*,\bar x^*)}W_n)_{\mathcal H}+o(1)=\\
&(\psi,W_n)_{\dot H^1}+\int V(x+(x_1^*,\bar x^*))\psi(x)\bar{W}_n\,dx=o(1)
\end{aligned}
\end{equation*}
where we have used the fact that $W_n\overset{ H^1} \rightharpoonup0$. 
\newline

We now turn our attention to the orthogonality of the non quadratic term of the energy,
namely \eqref{gasp3}. The proof is almost the same of the one carried out in \cite{BV}, with some modification. \\

\noindent \emph{Case 1.} Suppose $|t_n|\overset{n\to \infty}\longrightarrow\infty.$ By \eqref{disp2} we have  $\|e^{it(\Delta-V)}\|_{L^1\ra L^{\infty}}\lesssim|t|^{-d/2}$ for any $t\neq0.$ We recall that for the evolution operator $e^{it(\Delta-V)}$ the $L^2$ norm is conserved, so  the estimate  $\|e^{it(\Delta-V)}\|_{L^{p\prime}\ra L^{p}}\lesssim|t|^{-d\left(\frac{1}{2}-\frac{1}{p}\right)}$ holds from Riesz-Thorin interpolation theorem, thus we have the conclusion
provided that $\psi\in L^1\cap L^2$. If 
this  is not the case we can conclude by a straightforward approximation argument. This implies that if $|t_n|\to\infty$ as $n\to\infty$ then for any $p\in(2,2^*)$ and for any $\psi\in H^1$
\begin{equation*}
\|e^{it_n(\Delta-V)}\tau_{x_n}\psi\|_{L^p}\overset{n\ra \infty}\longrightarrow 0.
\end{equation*}
Thus we conclude  by \eqref{dec2}. \\

\noindent\emph{Case 2.}
Assume now that  $t_n\overset{n\to \infty}\longrightarrow t^*\in\R$ and $x_n
\overset{n\to \infty}\longrightarrow x^*\in\R^d.$ In this case the proof relies on a combination of the Rellich-Kondrachov theorem 
and the Brezis-Lieb Lemma contained in \cite{BL}, provided that 
\begin{equation}\notag
\|e^{it_n(\Delta-V)}(\tau_{x_n}\psi)-e^{it^*(\Delta-V)}(\tau_{x^*}\psi)\|_{H^1}\overset{n\ra\infty}\longrightarrow0,\qquad\forall\,\psi\in H^1.
\end{equation} 
But this is a straightforward consequence of the continuity of the linear propagator (see \cite{BV} for more details).
\newline

\noindent \emph{Case 3.} It remains to consider $t_n\overset{n\to \infty}\longrightarrow t^*\in\R$ and $|x_n|\overset{n\to \infty}\longrightarrow\infty.$ 
Also here we can proceed as in \cite{BV} provided that for any $\psi\in H^1$ there exists a $\psi^*\in H^1$ such that
\begin{equation}\notag
\|\tau_{-x_n}(e^{it_n(\Delta-V)}(\tau_{x_n}\psi))-\psi^*\|_{H^1}\overset{n\ra\infty}\longrightarrow0.
\end{equation} 
Since translations are isometries in $H^1,$ it suffices to show that  for some $\psi^*\in H^1$ 
\begin{equation*}
\|e^{it_n(\Delta-V)}\tau_{x_{n}}\psi-\tau_{x_n}\psi^*\|_{H^1}
\overset{n\ra \infty}\longrightarrow0.
\end{equation*}
We decompose $x_n=(x_{n,1}, \bar x_n)\in \R\times \R^{d-1}$
and
we consider the two scenarios: $|x_{n,1}|\overset{n\ra \infty}\longrightarrow \infty$ and $
\sup_n |x_{n,1}|<\infty$.
\newline
If $x_{n,1}
\overset{n\ra \infty}\longrightarrow -\infty,$ by continuity in $H^1$ of the flow, it is enough to prove that 
\begin{equation*}
\| e^{it^*(\Delta-V)}\tau_{x_{n}}\psi-e^{it^*\Delta}\tau_{x_{n}}\psi\|_{H^{1}}
\overset{n\ra \infty}\longrightarrow 0.
\end{equation*}

We observe that 
\begin{equation*}
e^{it^*(\Delta-V)}\tau_{x_{n}}\psi-e^{it^*\Delta}\tau_{x_{n}}\psi=\int_{0}^{t^*}e^{i(t^*-s)(\Delta-V)}(Ve^{-is \Delta}\tau_{x_{n}}\psi)(s)\,ds
\end{equation*}
and hence, 
\begin{equation*}
\| e^{it^*(\Delta-V)}\tau_{x_{n}}\psi-e^{it^*\Delta}\tau_{x_{n}}\psi\|_{H^1}\leq \int_0^{t^*}
\|(\tau_{-x_n}V)e^{is\Delta}\psi\|_{H^1} ds.
\end{equation*}
We will show that 
\begin{equation}\label{s}\int_0^{t^*}
\|(\tau_{-x_n}V)e^{is\Delta}\psi\|_{H^1} ds\overset{n\ra \infty}\longrightarrow 0.
\end{equation}
Since we are assuming $x_{n,1}
\overset{n\ra \infty}\longrightarrow -\infty,$ for fixed $x\in\mathbb{R}^d$ we get $V(x+x_n)\overset{n\ra \infty}\longrightarrow 0,$  namely $(\tau_{-x_n}V)(x)\overset{n\ra \infty}\longrightarrow 0$ pointwise; since $V\in L^{\infty},$ $|\tau_{-x_n}V|^2|e^{it \Delta}\psi|^2\leq\|V\|^2_{L^{\infty}}|e^{it\Delta}\psi|^2$ and $|e^{it \Delta}\psi|^2\in L^1,$ the dominated convergence theorem yields to
\begin{equation*}
\|(\tau_{-x_n}V)e^{it \Delta}\psi\|_{L^2}\overset{n\ra \infty}\longrightarrow 0.
\end{equation*} 
Analogously, since $|x_{n,1}|
\overset{n\ra \infty}\longrightarrow\infty$
implies $| \nabla \tau_{-x_n} V(x)|
\overset{n\ra \infty}\longrightarrow  0,$ 
we obtain 
\begin{equation*}
\|\nabla(\tau_{-x_n}Ve ^{it\Delta} \psi)\|_{L^2}\leq\| 
(e^{it \Delta} \psi) \nabla 
\tau_{-x_n} V \|_{L^2}+\|
(\tau_{-x_n}V) \nabla(e^{it \Delta}\psi)\|_{L^2}\overset{n\ra \infty}\longrightarrow 
0.
\end{equation*}

We conclude 
\eqref{s} by using 
the dominated convergence theorem w.r.t the measure 
$ds$.

For the case  $x_{n,1}\overset{n\ra \infty}\longrightarrow \infty$ we proceed similarly.
 
If $\sup_{n\in \N} |x_{n,1}|<\infty,$ then
up to subsequence
$x_{n,1} \overset{n\rightarrow \infty} \longrightarrow x_1^*\in \R$.
The thesis follows by choosing 
 $\psi^*=e^{it^*(\Delta-V)}\tau_{(x_1^*,\bar x^*)}\psi,$ with $\bar x^*
 \in \R^{d-1}$ defined as follows
(see above the proof of \eqref{gasp2}):
$\bar x_{n}= \bar x^*+\sum_{l=2}^d k_{n, l}P_l+o(1)$ with
$k_{n, l}\in \mathbb Z$ and
$\sum_{l=2}^d |k_{n,l}|\overset{n\rightarrow \infty} \rightarrow \infty.$ 
\newline
Finally, it is straightforward from \cite{BV} that the conditions on the parameters \eqref{parav} and \eqref{para2v} hold. 

\end{proof}

\begin{proof}[Proof of \autoref{profiledec}] The proof of the profile decomposition theorem can be carried out as in \cite{BV} iterating the previous lemma. 
\end{proof}
\section{Nonlinear profiles}\label{nonlinearpro}

The results of this section will be crucial along the construction of the minimal element.
We recall that the couple $(p,r)$ is the one given in \autoref{strichartz}.
Moreover for every sequence $x_n\in \R^d$ we use the notation
$x_n=(x_{n,1}, \bar x_n)\in \R\times \R^{d-1}$.

\begin{lemma}\label{lem5.1}
Let $\psi\in H^1$ and $\{x_n\}_{n\in \mathbb{N}}\subset\mathbb{R}^d$ be such that 
$|x_{n,1}| \overset{n\ra \infty}\longrightarrow \infty$. Up to subsequences  we have the following estimates:
\begin{equation}\label{eq5.1}
x_{n,1} \overset{n\ra \infty}\longrightarrow -\infty
\implies \|e^{it\Delta}\psi_{n}-e^{it(\Delta-V)}\psi_{n}\|_{L^pL^r}\overset{n\ra\infty}\longrightarrow 0,
\end{equation}
\begin{equation}\label{eq5.2}
x_{n,1}
\overset{n\ra \infty}\longrightarrow +\infty \implies \|e^{it(\Delta-1)}\psi_{n}-e^{it(\Delta-V)}\psi_{n}\|_{L^pL^r}\overset{n\ra\infty}\longrightarrow 0,
\end{equation}
where $\psi_n:=\tau_{x_n}\psi.$
\end{lemma}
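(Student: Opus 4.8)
The plan is to prove \eqref{eq5.1}; the statement \eqref{eq5.2} is entirely analogous, replacing the role of the limit $0$ of $V$ at $-\infty$ with the limit $1$ at $+\infty$ and the free group $e^{it\Delta}$ with $e^{it(\Delta-1)}$. First I would write the difference of the two flows via Duhamel's formula applied to the operator $\Delta-V = \Delta - V$: setting $w_n(t) := e^{it(\Delta-V)}\psi_n - e^{it\Delta}\psi_n$, one has
\begin{equation*}
w_n(t) = -i\int_0^t e^{i(t-s)(\Delta-V)}\big(V e^{is\Delta}\psi_n\big)\,ds,
\end{equation*}
so that by the inhomogeneous Strichartz estimate (the admissible-pair version, which follows from \eqref{fxc1}--\eqref{fxc3} and the standard $TT^*$ argument under the black-box dispersive bound \eqref{disp-d}) we get
\begin{equation*}
\|w_n\|_{L^pL^r} \lesssim \big\| V\, e^{it\Delta}\psi_n\big\|_{L^{\tilde q'}L^{\tilde r'}}
\end{equation*}
for a suitable admissible pair $(\tilde q,\tilde r)$, or more conveniently in a dual Strichartz norm such as $L^{1}_tL^{2}_x$ on a fixed bounded time interval combined with a small-tail estimate for large $|t|$. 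In fact, since $\psi_n = \tau_{x_n}\psi$ is a fixed profile merely translated, $\|e^{it\Delta}\psi_n\|_{L^pL^r} = \|e^{it\Delta}\psi\|_{L^pL^r} < \infty$ is finite and independent of $n$, and the same holds for every Strichartz norm; this uniform finiteness is what lets us localize in time.

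The key point is the spatial decoupling. Fix $\eps>0$. Choose $T>0$ so large that the contributions of $|t|>T$ to the relevant Strichartz norms of $e^{it\Delta}\psi$ (hence of $e^{it\Delta}\psi_n$) are $<\eps$; this handles the large-time part uniformly in $n$ by translation invariance. On the compact interval $[-T,T]$ I would estimate
\begin{equation*}
\big\|V e^{it\Delta}\psi_n\big\|_{L^1([-T,T];L^2)} = \int_{-T}^{T}\big\|V\,\tau_{x_n}e^{it\Delta}\psi\big\|_{L^2}\,dt
= \int_{-T}^{T}\big\|(\tau_{-x_n}V)\,e^{it\Delta}\psi\big\|_{L^2}\,dt,
\end{equation*}
using that translation commutes with $e^{it\Delta}$ and is an $L^2$-isometry. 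Now since $x_{n,1}\to-\infty$ and $V$ satisfies \eqref{difflim2} with $a_-=0$ uniformly in $\bar x$, we have $(\tau_{-x_n}V)(x) = V(x+x_n)\to 0$ pointwise, while $|(\tau_{-x_n}V)\,e^{it\Delta}\psi|^2 \le \|V\|_{L^\infty}^2 |e^{it\Delta}\psi|^2 \in L^1_x$, so the dominated convergence theorem gives $\|(\tau_{-x_n}V)e^{it\Delta}\psi\|_{L^2}\to 0$ for each $t$; a second application of dominated convergence in the variable $s$ over the finite interval $[-T,T]$ (the integrand being bounded by the fixed integrable function $\|V\|_{L^\infty}\|e^{it\Delta}\psi\|_{L^2}$) yields that this term tends to $0$ as $n\to\infty$. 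This is exactly the mechanism already used in the proof of \eqref{gasp2} and of \eqref{s} in the previous lemma, so I would simply invoke that computation. Combining the large-time tail bound with the vanishing compact-time piece gives $\limsup_n\|w_n\|_{L^pL^r}\lesssim\eps$, and since $\eps$ is arbitrary, \eqref{eq5.1} follows.

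The main obstacle — or rather the only genuine subtlety — is making the time-cutoff argument rigorous when one insists on working directly in the non-admissible norm $L^pL^r$: one must make sure that the inhomogeneous estimate being used is valid (either an admissible-pair inhomogeneous Strichartz estimate, which is classical, or the Christ--Kiselev-type estimate \eqref{str2.4}, at the cost of matching exponents), and that the "large $|t|$" tail is controlled by a Strichartz norm of $e^{it\Delta}\psi$ that is actually finite. Since $\alpha>4/d$ guarantees $(p,r)$ together with the auxiliary pairs in Section \ref{strichartz} are the right ones and all the needed global Strichartz norms of $e^{it\Delta}\psi$ are finite for $\psi\in H^1$, this is a routine bookkeeping matter rather than a real difficulty; the conceptual content is entirely the pointwise vanishing $\tau_{-x_n}V\to 0$ combined with dominated convergence.
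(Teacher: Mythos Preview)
Your overall strategy matches the paper's --- Duhamel for the difference, a time cutoff, and dominated convergence on the bounded interval via $\tau_{-x_n}V\to 0$ --- but there is a genuine gap in the large-time piece. You write that the tail $|t|>T$ is handled ``uniformly in $n$ by translation invariance'' because $\|e^{it\Delta}\psi_n\|_{L^p_{|t|>T}L^r}=\|e^{it\Delta}\psi\|_{L^p_{|t|>T}L^r}$ is small; however the difference $w_n$ also contains $e^{it(\Delta-V)}\psi_n$, and the perturbed flow is \emph{not} translation invariant. One therefore needs, uniformly in $n$,
\[
\sup_{n}\|e^{it(\Delta-V)}\psi_n\|_{L^p_{(T,\infty)}L^r}\overset{T\to\infty}\longrightarrow 0,
\]
which is exactly what the paper establishes as \eqref{eq5.3}: approximate $\psi$ in $H^1$ by $\tilde\psi\in C^\infty_c$, use \eqref{fxc3} on the error, and the interpolated dispersive bound \eqref{disp-d} on $e^{it(\Delta-V)}\tilde\psi_n$. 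This is short but not mere bookkeeping; without it the cutoff does not close, and there is no global dual-Strichartz norm available for the source (since $V$ is steplike, $V\notin L^a$ for any $a<\infty$, so $Ve^{is\Delta}\psi_n\notin L^{q'}L^{r'}$ and \eqref{str2.4} cannot be used globally).

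A second, smaller point: on the bounded interval the paper places the source in $L^1_{(0,T)}H^1_x$ rather than your $L^1_tL^2_x$, because $(p,r)$ is a non-admissible pair and the route to $L^pL^r$ goes through $H^1$ data via \eqref{fxc3}. This in turn uses the hypothesis $|\nabla V(x_1,\bar x)|\to 0$ as $|x_1|\to\infty$ to get $\|(\tau_{-x_n}V)e^{is\Delta}\psi\|_{H^1}\to 0$; your $L^2$ version would only land in an admissible $L^aL^b$ norm, not the target $L^pL^r$.
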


\begin{proof}
Assume 
$x_{n,1}
\overset{n\ra \infty}\longrightarrow -\infty$
(the case $x_{n,1}
\overset{n\ra \infty}\longrightarrow +\infty$
can be treated similarly).
We first prove that
\begin{equation}\label{eq5.3}
\sup_{n\in\mathbb{N}}\| e^{it(\Delta-V)}\psi_{n}\|_{L^{p}_{(T,\infty)}L^{r}}\overset{T\ra\infty}\longrightarrow0.
\end{equation}
Let $\varepsilon>0$. By density there exists $\tilde{\psi}\in C^{\infty}_c$ such that $\|\tilde{\psi}-\psi\|_{H^{1}}\leq\varepsilon,$ then by the estimate \eqref{fxc3}
\begin{equation*}
\|e^{it(\Delta-V)}(\tilde{\psi}_{n}-\psi_{n})\|_{L^{p}L^{r}}\lesssim\|\tilde{\psi}_{n}-\psi_{n}\|_{H^{1}}=\|\tilde{\psi}-\psi\|_{H^{1}}\lesssim\varepsilon.
\end{equation*}
Since $\tilde{\psi}\in L^{r'}$, by interpolation between 
the dispersive estimate \eqref{disp2}
and the conservation of the mass along the linear flow,
we have
\begin{equation*}
\| e^{it(\Delta-V)}\tilde{\psi}_{n}\|_{L^{r}}\lesssim|t|^{-d\left(\frac{1}{2}-\frac{1}{r}\right)}\|\tilde{\psi}\|_{L^{r'}},
\end{equation*}
and since $f(t)=|t|^{-d\left(\frac{1}{2}-\frac{1}{r}\right)}\in L^p(|t|>1),$ there exists $T>0$ such that 
\begin{equation*}
\sup_n\| e^{it(\Delta-V)}\tilde{\psi}_{n}\|_{L^{p}_{|t|\geq T}L^{r}}\leq\varepsilon,
\end{equation*}
hence we get \eqref{eq5.3}.
In order to obtain \eqref{eq5.1}, we are reduced to show that for a fixed $T>0$
\begin{equation*}
\| e^{it \Delta}\psi_{n}-e^{it(\Delta-V)}\psi_{n}\|_{L^{p}_{(0,T)}L^{r}}\overset{n\ra\infty}\longrightarrow0.
\end{equation*}
Since $w_n=e^{it \Delta}\psi_{n}-e^{it(\Delta-V)}\psi_{n}$ is the solution of the following linear Schr\"odinger equation 
\begin{equation*}
\left\{\begin{aligned}
i\partial_{t}w_n+ \Delta w_n-Vw_n&=-Ve^{it\Delta}\psi_{n}\\
w_n(0)&=0
\end{aligned}\right.,
\end{equation*}
by combining \eqref{fxc3} with the Duhamel formula we get
\begin{align*}
\|e^{it \Delta}\psi_{n}-e^{it(\Delta-V)}\psi_{n}\|_{L^{p}_{(0,T)}L^{r}}&\lesssim 
\|(\tau_{-x_n}V)e^{it \Delta}\psi\|_{L^1_{(0,T)}H^1}.
\end{align*}
The thesis follows from the dominated convergence theorem.
\end{proof}

\begin{lemma}\label{lem5.2}
Let $\{x_{n}\}_{n\in\mathbb{N}}\subset\mathbb{R}^d$ be a sequence such that 
$x_{n,1}
\overset{n\ra \infty}\longrightarrow -\infty,$
(resp. $x_{n,1}
\overset{n\ra \infty}\longrightarrow +\infty$)
and $v\in \mathcal{C}(\mathbb{R};H^{1})$ 
be the unique solution to \eqref{NLS-d} (resp. \eqref{NLS1-d}). Define  $v_{n}(t,x):=v(t,x-x_{n})$. Then, up to a subsequence, the followings hold: 

\begin{equation}\label{eq5.11}
\bigg\|\int_{0}^{t}[e^{i(t-s)\Delta}\left (|v_{n}|^{\alpha}v_{n}
\right )-e^{i(t-s)(\Delta-V)}
\left (|v_{n}|^{\alpha}v_{n}\right )]ds \bigg\|_{L^{p}L^{r}}\overset{n\ra\infty}\longrightarrow0;
\end{equation}

\begin{equation}\label{eq5.12}
\left( \hbox{resp.\,} \bigg\|\int_{0}^{t}[e^{i(t-s)(\Delta-1)}\left (|v_{n}|^{\alpha}v_{n}\right )
-e^{i(t-s)(\Delta-V)}
\left (|v_{n}|^{\alpha}v_{n}\right )]ds\bigg\|_{L^{p}L^{r}}\overset{n\ra\infty}\longrightarrow0\right).
\end{equation}
\end{lemma}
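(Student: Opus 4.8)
The plan is to treat \eqref{eq5.11} and its resp.\ version simultaneously, since the argument is symmetric (the case $x_{n,1}\to+\infty$ merely replaces the free propagator by $e^{it(\Delta-1)}$). Write $z_n$ for the quantity inside the $L^pL^r$-norm in \eqref{eq5.11}. The first step is to decompose the time integral into a region near the origin, $|t|\leq T$ and $|s|\leq T$, and a tail region $|t|>T$ or $|s|>T$; the parameter $T$ is chosen so that the tail is uniformly small. For the tail one uses that $v\in\mathcal C(\R;H^1)\cap L^pL^r$ (this is the standard Nakanishi scattering for \eqref{NLS-d}, cf.\ \cite{N}, applied to the single profile $v$) together with the inhomogeneous Strichartz estimate \eqref{str2.4} and the fractional chain rule to bound $\||v_n|^\alpha v_n\|_{L^{q'}_{|s|>T}L^{r'}}=\||v|^\alpha v\|_{L^{q'}_{|s|>T}L^{r'}}\lesssim\|v\|_{L^p_{|s|>T}L^r}^{\alpha}\|v\|_{L^\infty H^1}$, which tends to $0$ as $T\to\infty$ uniformly in $n$ (both propagators are bounded on the relevant Strichartz spaces by \eqref{fxc3} and the dispersive estimate \eqref{disp-d}). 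So it suffices to handle the compact time window.

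On the compact window the key point is exactly \autoref{lem5.1}, but applied to the datum $g=|v_n(s,\cdot)|^\alpha v_n(s,\cdot)=\tau_{x_n}\big(|v(s)|^\alpha v(s)\big)$ for each fixed $s$. I would first establish the pointwise-in-$s$ statement
\[
\big\|e^{i(t-s)\Delta}\big(|v_n(s)|^\alpha v_n(s)\big)-e^{i(t-s)(\Delta-V)}\big(|v_n(s)|^\alpha v_n(s)\big)\big\|_{L^p_{(0,T)}L^r}\xrightarrow{n\to\infty}0,
\]
which is proved verbatim as in \autoref{lem5.1}: the difference $w_n$ solves the inhomogeneous equation with forcing $-V e^{i(t-s)\Delta}\tau_{x_n}(|v(s)|^\alpha v(s))$ and zero data, so by \eqref{fxc3} and Duhamel it is controlled by $\int_0^T \|(\tau_{-x_n}V)\,e^{i\sigma\Delta}(|v(s)|^\alpha v(s))\|_{H^1}\,d\sigma$, which vanishes by dominated convergence because $\tau_{-x_n}V\to 0$ pointwise (and $|\nabla\tau_{-x_n}V|\to0$) when $x_{n,1}\to-\infty$, while $|v(s)|^\alpha v(s)\in H^1$ is a fixed function (using $H^1\hookrightarrow L^\infty$ in $1D$, or the Strichartz/Sobolev structure in higher $d$, to see $|v(s)|^\alpha v(s)\in H^1$ for a.e.\ $s$). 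Then one integrates this in $s$ over $(0,T)$ (and $(-T,0)$): by Minkowski's integral inequality,
\[
\|z_n\|_{L^p_{(0,T)}L^r}\leq \int_0^T \big\|e^{i(t-s)\Delta}(|v_n|^\alpha v_n)-e^{i(t-s)(\Delta-V)}(|v_n|^\alpha v_n)\big\|_{L^p_{(s,T)}L^r}\,ds,
\]
and the integrand is bounded uniformly in $n$ by $C\,\||v(s)|^\alpha v(s)\|_{H^1}\in L^1(0,T)$ (from \eqref{fxc3} applied to both propagators) and tends to $0$ pointwise in $s$; a final application of the dominated convergence theorem in $s$ closes the estimate on the compact window.

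The main obstacle, and the only place requiring care, is the interchange of the $s$-integration with the limit $n\to\infty$: one needs an $n$-uniform, $s$-integrable majorant for the inner $L^p_tL^r$-norm of the Duhamel difference. This is where having $v\in L^pL^r$ globally (hence $|v|^\alpha v\in L^{q'}L^{r'}$, and by the chain rule $\|v(s)\|_{H^1}$-type bounds that are locally integrable in $s$) is essential; the defocusing $L^2$-supercritical hypothesis $\alpha>4/d$ enters precisely to guarantee this global spacetime bound via Nakanishi's theorem. Everything else is a routine combination of \autoref{lem5.1}, the Strichartz estimates of \autoref{strichartz}, and two applications of dominated convergence (once in $\sigma$ inside \autoref{lem5.1}, once in $s$ here).
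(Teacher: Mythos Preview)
Your strategy (split into a tail in time and a compact window, then exploit $\tau_{-x_n}V\to 0$ on the compact piece) is the paper's, but the tail step as written has a gap. Controlling $\||v|^\alpha v\|_{L^{q'}_{|s|>T}L^{r'}}$ and invoking \eqref{str2.4} only bounds the part of the Duhamel integral with source on $|s|>T$; for $|t|>T$ the integral $\int_0^t$ still carries the source on $[0,T]$, and that contribution is \emph{not} uniformly small in $n$ by Strichartz alone (after writing it as $e^{i(t-T)(\Delta-V)}h_n$ with $h_n=\int_0^T e^{i(T-s)(\Delta-V)}(|v_n|^\alpha v_n)\,ds$, estimate \eqref{fxc3} gives only $\lesssim\|h_n\|_{H^1}$, which is bounded, not small). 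The paper closes this with the dispersive estimate plus Hardy--Littlewood--Sobolev: one bounds $\big\|\int_0^t e^{i(t-s)(\Delta-V)}(|v_n|^\alpha v_n)\,ds\big\|_{L^r_x}$ pointwise in $t$ by $|t|^{-d(\frac12-\frac1r)}*\|v(\cdot)\|_{L^r}^{\alpha+1}$, and since this convolution lies in $L^p_t$ by HLS with a bound independent of $n$, its tail over $(T,\infty)$ is uniformly small. Your parenthetical mention of \eqref{disp-d} does not substitute for this step. (A clean alternative that stays within your framework: decompose \emph{only} in $s$, and on the piece with source on $[0,T]$ apply \autoref{lem5.1} with the \emph{global} $L^pL^r$ norm in $t$, not $L^p_{(0,T)}L^r$; Lemma~\ref{lem5.1} already gives the full-line statement, so no separate tail-in-$t$ control is needed.)

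On the compact window your pointwise-in-$s$ reduction to \autoref{lem5.1} followed by Minkowski is valid but more roundabout than the paper's route: they observe that the full difference $w_n(t)=\int_0^t[e^{i(t-s)\Delta}-e^{i(t-s)(\Delta-V)}](|v_n|^\alpha v_n)\,ds$ solves $i\partial_t w_n+(\Delta-V)w_n=-V\int_0^t e^{i(t-s)\Delta}(|v_n|^\alpha v_n)\,ds$ with zero data, so a single Duhamel/\eqref{fxc3} step yields $\|w_n\|_{L^p_{(0,T)}L^r}\lesssim\|(\tau_{-x_n}V)A\|_{L^1_{(0,T)}H^1}$ with $A(t)=\int_0^t e^{i(t-s)\Delta}(|v|^\alpha v)\,ds\in\mathcal C([0,T];H^1)$, and dominated convergence finishes. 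This avoids your need for $|v(s)|^\alpha v(s)\in H^1$ at a.e.\ fixed $s$, which is automatic in $d=1$ via $H^1\hookrightarrow L^\infty$ but can fail for $\alpha$ close to $4/(d-2)$ in higher dimensions; the paper only needs the Duhamel integral (equivalently $v(t)-e^{it\Delta}v(0)$) to lie in $H^1$, which it always does.
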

\begin{proof} Assume $x_{n,1}
\overset{n\ra \infty}\longrightarrow -\infty$
 (the case $x_{n,1}
\overset{n\ra \infty}\longrightarrow +\infty$
can be treated similarly).
Our proof starts with the observation that
\begin{equation}\label{fiseca}
\lim_{T\rightarrow \infty} 
\bigg(\sup_n \bigg\|\int_{0}^{t}e^{i(t-s)(\Delta-V)} \left (|v_{n}|^{\alpha}v_{n}\right )ds\bigg\|_{L^{p}_{(T,\infty)}L^{r}}\bigg)=0.
\end{equation}
By Minkowski inequality and the interpolation of the dispersive estimate \eqref{disp2} 
with the conservation of the mass, we have
\begin{align}\notag
\bigg\|\int_{0}^{t}e^{i(t-s)(\Delta-V)}\left(|v_{n}|^{\alpha}v_{n}\right)ds\bigg\|_{L^r_x}&\lesssim\int_{0}^{t}|t-s|^{-d\left(\frac{1}{2}-\frac{1}{r}\right)}\||v_n|^{\alpha}v_n\|_{L^{r'}_x}ds\\\notag
&\lesssim\int_{\mathbb{R}}|t-s|^{-d\left(\frac{1}{2}-\frac{1}{r}\right)}\||v|^{\alpha}v\|_{L^{r'}_x} ds= 
|t|^{-d\left(\frac 12 - \frac 1r\right)}\ast g
\end{align}
with $g(s)=\||v|^{\alpha}v(s)\|_{L^{r'}_x}.$ We conclude \eqref{fiseca} provided that we show
$|t|^{-d\left(\frac 12 - \frac 1r\right)}\ast g(t)\in L^p_t$.
By using the Hardy-Littlewood-Sobolev inequality (see for instance Stein's book \cite{ST}, p. 119) we assert
\begin{equation*}
\big\||t|^{-1+\frac{(2-d)\a+4}{2(\a+2)}}\ast g(t) 
\big\|_{L^p_t} \lesssim\||v|^{\alpha}v\|_{L^{\frac{2\alpha(\alpha+2)}{\left((2-d)\a+4\right)(\a+1)}}L^{r'}}=\|v\|^{\a+1}_{L^pL^r}.
\end{equation*}
Since $v$ scatters, then it belongs to $L^pL^r,$ and so we can deduce the validity of \eqref{fiseca}.
\newline

Consider now $T$ fixed: we are reduced to show that 
\begin{equation*}
\bigg\|\int_{0}^{t}[e^{i(t-s)\Delta}\left(|v_{n}|^{\alpha}v_{n}\right)-e^{i(t-s)(\Delta-V)}\left(|v_{n}|^{\alpha}v_{n}\right)]ds\bigg\|_{L^{p}_{(0,T)} L ^{r}}\overset{n\ra\infty}\longrightarrow0.
\end{equation*} 
As usual we observe that 
\begin{equation*}
w_n(t,x)=\int_{0}^{t}e^{i(t-s) \Delta}\left(|v_{n}|^{\alpha}v_{n}\right) ds - \int_{0}^{t}e^{i(t-s)(\Delta-V)}\left(|v_{n}|^{\alpha}v_{n}\right) ds
\end{equation*}
is the solution of the following linear Schr\"odinger equation
\begin{equation*}
\left\{\begin{aligned}
i\partial_{t}w_n+ \Delta w_n -V w_n&=-V\int_{0}^{t}e^{i(t-s) \Delta}\left(|v_{n}|^{\alpha}v_{n}\right)ds\\
w_n(0)&=0
\end{aligned}\right.,
\end{equation*}
and likely for \autoref{lem5.1} we estimate 
\begin{align*}\notag
\bigg\| \int_{0}^{t}e^{i(t-s) \Delta}&\left(|v_{n}|^{\alpha}v_{n}\right)ds-\int_{0}^{t}e^{i(t-s)(\Delta-V)}\left(|v_{n}|^{\alpha}v_{n}\right)ds \bigg\|_{L^{p}_{(0,T)}L^{r}}\\
&\lesssim \|(\tau_{-x_n}V)|v|^{\alpha}v\|_{L^1_{(0,T)}H^1}.
\end{align*}
By using the dominated convergence theorem we conclude the proof.

\end{proof}
The previous results imply the following useful corollaries.
\begin{corollary}\label{cor5.3}
Let $\{x_n\}_{n\in\mathbb{N}}\subset\mathbb{R}^d$ be a sequence such that 
$x_{n,1}
\overset{n\ra \infty}\longrightarrow -\infty,$
and let $v\in\mathcal{C}(\mathbb{R};H^1)$ be the unique solution to \eqref{NLS-d} with initial datum $v_0\in H^1$. Then
\begin{equation*}
v_n(t,x)=e^{it(\Delta-V)}v_{0,n} -i\int_{0}^{t}e^{i(t-s)(\Delta-V)}\left(|v_{n}|^{\alpha}v_{n}\right)ds+e_{n}(t,x)
\end{equation*}
where $v_{0,n}(x):=\tau_{x_n}v_0(x),$ $v_{n}(t,x):=v(t,x-x_n)$ and $\|e_n\|_{L^pL^r}
\overset{n\ra\infty}\longrightarrow 0$.
\end{corollary}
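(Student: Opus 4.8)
The plan is to prove \autoref{cor5.3} by rewriting the nonlinear solution $v$ of \eqref{NLS-d} via its Duhamel formula for the \emph{free} flow, translating in space, and then replacing the free propagator $e^{it\Delta}$ by the perturbed propagator $e^{it(\Delta-V)}$ at the cost of an error term which is shown to be negligible in $L^pL^r$ using \autoref{lem5.1} and \autoref{lem5.2}. Concretely, since $v\in\mathcal C(\R;H^1)$ solves \eqref{NLS-d} with datum $v_0$, Duhamel gives
\begin{equation*}
v(t)=e^{it\Delta}v_0-i\int_0^t e^{i(t-s)\Delta}\big(|v|^\alpha v\big)(s)\,ds,
\end{equation*}
and applying the space translation $\tau_{x_n}$ (which commutes with every Fourier multiplier, in particular with $e^{it\Delta}$, and with the nonlinearity $z\mapsto|z|^\alpha z$) yields
\begin{equation*}
v_n(t)=e^{it\Delta}v_{0,n}-i\int_0^t e^{i(t-s)\Delta}\big(|v_n|^\alpha v_n\big)(s)\,ds.
\end{equation*}

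Next I would introduce the desired identity by brute force: define
\begin{equation*}
e_n(t,x):=\Big(e^{it\Delta}v_{0,n}-e^{it(\Delta-V)}v_{0,n}\Big)-i\int_0^t\Big(e^{i(t-s)\Delta}-e^{i(t-s)(\Delta-V)}\Big)\big(|v_n|^\alpha v_n\big)(s)\,ds,
\end{equation*}
so that the claimed representation holds by construction. It then remains only to check $\|e_n\|_{L^pL^r}\to0$. The first bracket tends to $0$ in $L^pL^r$ by \eqref{eq5.1} of \autoref{lem5.1}, applied with $\psi=v_0$ (this is exactly the hypothesis $x_{n,1}\to-\infty$). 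The second term tends to $0$ in $L^pL^r$ by \eqref{eq5.11} of \autoref{lem5.2}, applied to the solution $v$ of \eqref{NLS-d}; note the hypotheses of \autoref{lem5.2} are met since $v$ scatters (defocusing, $L^2$-supercritical, Nakanishi-type global theory), hence $v\in L^pL^r$, which is what the proof of \eqref{eq5.11} uses. Adding the two contributions and using the triangle inequality in $L^pL^r$ gives $\|e_n\|_{L^pL^r}\to0$, up to the same subsequence along which \autoref{lem5.1} and \autoref{lem5.2} hold.

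There is essentially no genuine obstacle here: the statement is a bookkeeping corollary that repackages \autoref{lem5.1} and \autoref{lem5.2} into a single Duhamel-type formula with a perturbed propagator, and all the analytic work (dominated convergence, the dispersive decay of $e^{it(\Delta-V)}$, the Hardy--Littlewood--Sobolev estimate, and the scattering of $v$) has already been carried out in those two lemmas. The only point requiring a small amount of care is to make sure that the translation $\tau_{x_n}$ is correctly passed through both the linear propagator and the nonlinear term, and that one works consistently along a single subsequence for which both \eqref{eq5.1} and \eqref{eq5.11} are valid. The analogous statement with $x_{n,1}\to+\infty$ (and $e^{it(\Delta-1)}$, equation \eqref{NLS1-d}) follows verbatim using \eqref{eq5.2} and \eqref{eq5.12}, and I would simply remark this rather than repeat the argument.
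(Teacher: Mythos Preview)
Your proof is correct and follows exactly the approach the paper intends: the paper's own proof of this corollary is the single line ``It is a consequence of \eqref{eq5.1} and \eqref{eq5.11},'' and your argument simply makes explicit the Duhamel/translation bookkeeping behind that sentence.
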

\begin{proof}
It is a consequence of \eqref{eq5.1} and \eqref{eq5.11}.
\end{proof}
\begin{corollary}\label{cor5.4} 
Let $\{x_n\}_{n\in\mathbb{N}}\subset\mathbb{R}^d$ be a sequence such that 
$x_{n,1}
\overset{n\ra \infty}\longrightarrow +\infty,$
and let $v\in\mathcal{C}(\mathbb{R};H^1)$ be the
unique solution to \eqref{NLS1-d} with initial datum $v_0\in H^1$.
Then
\begin{equation*}
v_n(t,x)=e^{it(\Delta-V)} v_{0,n}- i\int_{0}^{t}e^{i(t-s)(\Delta-V)}\left(|v_{n}|^{\alpha}v_{n}\right)ds+e_{n}(t,x)
\end{equation*}
where $v_{0,n}(x):=\tau_{x_n}v_0(x),$ $v_{n}(t,x):=v (t,x-x_n)$ and $\|e_n\|_{L^pL^r}
\overset{n\ra\infty}\longrightarrow 0$.
\end{corollary}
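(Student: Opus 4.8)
The plan is to mirror verbatim the argument used for \autoref{cor5.3}, simply replacing the inputs \eqref{eq5.1} and \eqref{eq5.11} by their counterparts \eqref{eq5.2} and \eqref{eq5.12}, which are precisely the estimates available in the regime $x_{n,1}\overset{n\to\infty}\longrightarrow+\infty$ that is relevant here. Since the structural steps are identical, the proof will be short once those two lemmas are invoked.

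Concretely, I would first write the Duhamel representation for the solution $v$ of \eqref{NLS1-d} with datum $v_0$,
\[
v(t)=e^{it(\Delta-1)}v_0-i\int_0^t e^{i(t-s)(\Delta-1)}\big(|v|^{\alpha}v\big)(s)\,ds,
\]
and then apply the spatial translation $\tau_{x_n}$. Because the propagator $e^{it(\Delta-1)}=e^{-it}e^{it\Delta}$ has constant coefficients, it commutes with $\tau_{x_n}$; using moreover $\tau_{x_n}(|v|^{\alpha}v)=|v_n|^{\alpha}v_n$ one obtains
\[
v_n(t)=e^{it(\Delta-1)}v_{0,n}-i\int_0^t e^{i(t-s)(\Delta-1)}\big(|v_n|^{\alpha}v_n\big)(s)\,ds.
\]

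Next I would substitute $e^{it(\Delta-V)}$ for $e^{it(\Delta-1)}$ in both terms on the right-hand side, collecting the discrepancies into the error
\[
e_n(t):=\big(e^{it(\Delta-1)}-e^{it(\Delta-V)}\big)v_{0,n}-i\int_0^t\!\big(e^{i(t-s)(\Delta-1)}-e^{i(t-s)(\Delta-V)}\big)\big(|v_n|^{\alpha}v_n\big)(s)\,ds,
\]
so that the claimed identity holds by construction. It then only remains to check $\|e_n\|_{L^pL^r}\overset{n\to\infty}\longrightarrow0$: the first summand tends to zero in $L^pL^r$ by \eqref{eq5.2} of \autoref{lem5.1} applied with $\psi=v_0$, and the second by \eqref{eq5.12} of \autoref{lem5.2}. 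The only hypothesis worth pausing on before invoking \autoref{lem5.2} is that $v\in L^pL^r$, i.e. that the solution of \eqref{NLS1-d} scatters; but this is exactly Nakanishi's theorem (and its higher-dimensional analogue) recalled in the introduction, applied to the equation \eqref{NLS1-d} whose linear part differs from the free one only by the harmless constant $-1$. Hence there is no genuine obstacle; combining the two convergences gives $\|e_n\|_{L^pL^r}\to0$ and completes the proof.
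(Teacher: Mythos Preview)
Your proposal is correct and follows exactly the route the paper intends: the paper's own proof is the single sentence ``It is a consequence of \eqref{eq5.2} and \eqref{eq5.12},'' and you have simply unpacked that sentence by writing down the Duhamel formula for $v_n$, identifying $e_n$ as the sum of the two discrepancies, and applying \eqref{eq5.2} and \eqref{eq5.12} to each. Your remark on $v\in L^pL^r$ is accurate but already absorbed into the proof of \autoref{lem5.2}, so it need not be re-justified here.
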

\begin{proof}
It is a consequence of \eqref{eq5.2} and \eqref{eq5.12}.
\end{proof}

\begin{lemma}\label{lem5.5}
Let $v(t,x)\in \mathcal C(\mathbb{R}; H^1)$  be a solution to \eqref{NLS-d} 
(resp. \eqref{NLS1-d})
and let $\psi_\pm \in H^{1}$  (resp.  $\varphi_\pm \in H^{1}$) be such that
\begin{equation*}
\begin{aligned}
&\|v(t,x)-e^{it \Delta}\psi_\pm \|_{H^1}\overset{t\rightarrow\pm\infty}
{\longrightarrow}0
\\
\bigg(\hbox{ resp. } 
&\|v(t,x)-e^{it(\Delta-1)}\varphi_\pm\|_{H^1}\overset{t\rightarrow\pm\infty}
{\longrightarrow}0\bigg).
\end{aligned}
\end{equation*}
Let $\{x_{n}\}_{n\in\mathbb{N}}\subset\R^d, \,\{t_{n}\}_{n\in\mathbb{N}}\subset\mathbb{R}$ be two sequences such that $x_{n,1}
\overset{n\ra \infty}\longrightarrow -\infty$
(resp. $x_{n,1}
\overset{n\ra \infty}\longrightarrow +\infty$) and $|t_{n}|\overset{n\ra\infty}\longrightarrow \infty.$ Let us define moreover $v_{n}(t,x):=v(t-t_{n},x-x_{n})$ and $\psi_n^\pm (x):=\tau_{x_n}\psi_\pm(x)$ 
(resp.  $\varphi_n^\pm (x)=\tau_{x_n}\varphi_\pm(x)$). 
Then, up to subsequence, we get 

\begin{align}\notag
t_n\rightarrow \pm \infty \Longrightarrow
\| e^{i(t-t_{n})\Delta}\psi_{n}^\pm -e^{i(t-t_{n})(\Delta-V)}\psi_{n}^\pm\|_{L^{p}L^{r}}\overset{n\ra\infty}\longrightarrow0\quad\emph{and}
\\\label{eq517}
\bigg\|\int_{0}^{t}[e^{i(t-s) \Delta}\big(|v_{n}|^{\alpha}
v_{n}\big)ds- e^{i(t-s)(\Delta-V)}\big(|v_{n}|^{\alpha}v_{n}\big)]ds \bigg\|_{L^{p}L^{r}}\overset{n\ra\infty}\longrightarrow0
\end{align}
\begin{align*}
\bigg(\hbox{ resp. } 
t_n\rightarrow \pm \infty \Longrightarrow
\| e^{i(t-t_{n})(\Delta-1)}\varphi_{n}^\pm -e^{i(t-t_{n})(\Delta-V)}\varphi_{n}^\pm\|_{L^{p}L^{r}}\overset{n\ra\infty}\longrightarrow0\quad\emph{and}
\\\nonumber
\bigg\|\int_{0}^{t}[e^{i(t-s)(\Delta-1)}\big(|v_{n}|^{\alpha}
v_{n}\big)ds- e^{i(t-s)(\Delta-V)}\big(|v_{n}|^{\alpha} v_{n}\big)]ds \bigg\|_{L^{p}L^{r}}\overset{n\ra\infty}\longrightarrow0 \bigg).
\end{align*}

\end{lemma}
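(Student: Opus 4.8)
The statement is the ``moving-in-time'' analogue of \autoref{lem5.1} and \autoref{cor5.3}, and I would prove it by reducing to exactly those ingredients after exploiting the scattering hypothesis on $v$ and the divergence $|t_n|\to\infty$. Treat the case $x_{n,1}\to-\infty$ and $t_n\to+\infty$ (the other sign combinations are identical after replacing $\Delta$ by $\Delta-1$, resp. $+\infty$ by $-\infty$, and using the corresponding auxiliary equation). I would carry out the argument in three steps.

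\emph{Step 1: tail smallness of the relevant linear and Duhamel pieces, uniformly in $n$.} Mimicking the proof of \eqref{eq5.3} and \eqref{fiseca}, I first show that the operators $e^{i(t-t_n)(\Delta-V)}\psi_n^+$ and $\int_0^t e^{i(t-s)(\Delta-V)}(|v_n|^\alpha v_n)\,ds$ have $L^p_{|t|\ge T}L^r$ norm tending to $0$ as $T\to\infty$, uniformly in $n$. For the first term this follows from the dispersive estimate \eqref{disp2} interpolated with mass conservation, exactly as in \autoref{lem5.1}, together with the fact that $\|e^{i(t-t_n)(\Delta-V)}\psi_n^+\|_{L^p_{|t|\ge T}L^r}=\|e^{is(\Delta-V)}\psi^+\|_{L^p_{|s|\ge T-t_n}L^r}$ is even better for large $t_n$ (a time translation only helps once $t_n\to+\infty$); density of $C^\infty_c$ in $H^1$ and the Strichartz bound \eqref{fxc3} handle the approximation. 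For the Duhamel term I reuse the Hardy--Littlewood--Sobolev computation in \eqref{fiseca} verbatim, since $v$ scatters and hence $v\in L^pL^r$, so $\||v|^\alpha v\|\in$ the appropriate mixed-norm space and the convolution tail vanishes.

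\emph{Step 2: on a fixed finite window $(t_n-T,\,t_n+T)$ the profile has essentially left the support of $V$.} Using the scattering hypothesis, for $|t|$ large $v(t)$ is $H^1$-close to $e^{it\Delta}\psi_+$, which by \autoref{lem5.1}-type reasoning disperses; but the cleanest route is: on the window where $t-t_n$ ranges over $(-T,T)$, write $v_n(t,x)=v(t-t_n,x-x_n)$ and note $\|(\tau_{-x_n}V)\,w\|_{H^1}\to0$ for any fixed $w\in H^1$ because $x_{n,1}\to-\infty$ forces $\tau_{-x_n}V\to0$ and $\nabla\tau_{-x_n}V\to0$ pointwise with $L^\infty$ domination, so dominated convergence applies (this is exactly the mechanism in \autoref{lem5.1} and in Case~3 of the profile-decomposition lemma). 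Concretely, the difference $e^{i(t-t_n)\Delta}\psi_n^+-e^{i(t-t_n)(\Delta-V)}\psi_n^+$ solves a linear Schr\"odinger equation with forcing $-V e^{i(t-t_n)\Delta}\psi_n^+$, and \eqref{fxc3} plus Duhamel bound its $L^p_{(t_n-T,t_n+T)}L^r$ norm by $\int \|(\tau_{-x_n}V)e^{is\Delta}\psi_+\|_{H^1}\,ds\to0$; likewise the Duhamel-difference term is controlled by $\|(\tau_{-x_n}V)|v|^\alpha v\|_{L^1_{\text{loc}}H^1}\to0$, exactly as in \autoref{lem5.2}.

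\emph{Step 3: assemble.} Split $L^pL^r=L^p_{|t-t_n|\le T}L^r+L^p_{|t-t_n|>T}L^r$. By Step~1 (after the change of variables $s=t-t_n$, and using $t_n\to+\infty$ so the tail $\{|t-t_n|>T\}$ sits inside $\{|s|>T\}$) the outer piece is $\le\eps$ uniformly in $n$ once $T=T(\eps)$ is fixed; by Step~2 the inner piece $\to0$ as $n\to\infty$ for that fixed $T$. Letting $n\to\infty$ and then $\eps\to0$ gives both convergences in \eqref{eq517}. The main obstacle is purely bookkeeping: one must make sure the ``uniform-in-$n$ tail'' estimate of Step~1 genuinely survives the time translation by $t_n$ --- it does precisely because $t_n\to+\infty$ pushes the fixed window off to $+\infty$ where the dispersive decay has already been integrated --- and that the forcing terms in Step~2 really are dominated by fixed $L^1_tH^1_x$ functions independent of $n$, which follows from $\|V\|_{L^\infty}<\infty$, $|\nabla V|\to0$ at infinity, and $e^{is\Delta}\psi_+,\ |v|^\alpha v\in L^1_{\rm loc}(H^1)$. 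No new idea beyond Lemmas~\ref{lem5.1}--\ref{lem5.2} is needed; the point of isolating this lemma is to have the $t_n\to\pm\infty$ version ready for the construction of the minimal (critical) element.
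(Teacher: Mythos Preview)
Your overall strategy---tail smallness via dispersion/HLS, then the $\tau_{-x_n}V\to0$ mechanism on a bounded window---is the same as the paper's. Two points need correction.

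\emph{First estimate.} The displayed equality in Step~1 is wrong: $e^{is(\Delta-V)}$ does not commute with the space translation $\tau_{x_n}$, so $e^{is(\Delta-V)}\psi_n^+\neq \tau_{x_n}e^{is(\Delta-V)}\psi_+$, and the time region $\{|t|\geq T\}$ does not become $\{|s|\geq T-t_n\}$ under $s=t-t_n$. Fortunately none of this is needed: after the change of variable $s=t-t_n$ (which leaves the $L^pL^r$ norm invariant) the first estimate becomes literally $\|e^{is\Delta}\tau_{x_n}\psi_\pm-e^{is(\Delta-V)}\tau_{x_n}\psi_\pm\|_{L^pL^r}\to0$, i.e.\ exactly Lemma~\ref{lem5.1}. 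No new argument is required.

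\emph{Second estimate: a genuine gap in Step~2.} After the change of variable the Duhamel integral reads $\int_{-t_n}^{\tau} e^{i(\tau-\sigma)A}\tau_{x_n}(|v|^\alpha v)(\sigma)\,d\sigma$ with $\tau\in(-T,T)$, and the lower limit $-t_n\to-\infty$. Your claim that the difference is controlled by $\|(\tau_{-x_n}V)|v|^\alpha v\|_{L^1_{\mathrm{loc}}H^1}$ ``exactly as in Lemma~\ref{lem5.2}'' fails here: in Lemma~\ref{lem5.2} the integration domain is bounded, whereas now the Cauchy problem for $w_n$ starts at time $-t_n$ and the Strichartz/Duhamel bound produces $\int_{-t_n}^{T}\|(\tau_{-x_n}V)\,u_1(\tau)\|_{H^1}\,d\tau$ over a growing interval. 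The integrand is dominated by a constant (since $\|u_1(\tau)\|_{H^1}$ is bounded), not by an $L^1$ function of $\tau$, so dominated convergence does not apply. The paper repairs this by first peeling off the piece $\int_{-t_n}^{-T}$ of the Duhamel integral and estimating it, for \emph{each} propagator separately, by Strichartz \eqref{fxc3} together with $\||v|^\alpha v\|_{L^1_{(-\infty,-T)}H^1}<\varepsilon$ (available precisely because $v$ scatters). Only after that does one treat $\int_{-T}^{\tau}$, whose Cauchy problem now starts at the fixed time $-T$ so that the Lemma~\ref{lem5.2} argument on $(-T,T)$ goes through. On the region $\tau>T$ the paper further splits $\int_{-T}^{\tau}=\int_{-T}^{T}+\int_T^{\tau}$, using the dispersive/HLS bound for the first piece and Strichartz \eqref{str2.4} with $\||v|^\alpha v\|_{L^{q'}_{(T,\infty)}L^{r'}}<\varepsilon$ for the second. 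Your Step~1 tail estimate handles the two Duhamel terms individually but does not substitute for this splitting of the integral itself; you should add it.
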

\begin{proof} It is a multidimensional suitable version of 
Proposition 3.6 in \cite{BV}. 
Nevertheless, since in \cite{BV} the details of the proof are not given, we expose below the proof
of the most delicate estimate, namely the second estimate
in \eqref{eq517}. After a change of variable in time, proving \eqref{eq517} is clearly equivalent to prove 
\begin{equation*}
\bigg\|\int_{-t_n}^{t}e^{i(t-s) \Delta}\tau_{x_n}(|v|^{\alpha} v)(s)ds- \int_{-t_n}^te^{i(t-s)(\Delta-V)}\tau_{x_n}(|v|^{\alpha} v)(s)ds \bigg\|_{L^{p}L^{r}}\overset{n\ra\infty}\longrightarrow0.
\end{equation*}
\newline
We can focus on the case $t_n\ra \infty$ and $x_{n,1}
\overset{n\ra \infty}\longrightarrow+ \infty,$ being the other cases similar. 
\\
The idea of the proof is to split the estimate above in three different regions, i.e. $(-\infty,-T)\times\mathbb{R}^d, (-T,T)\times\mathbb{R}^d, (T,\infty)\times\mathbb{R}^d$ for some fixed $T$ which will be chosen in an appropriate way below. The strategy is to use translation in the space variable to gain smallness in the strip  $(-T,T)\times\mathbb{R}^d$ while we use smallness of Strichartz estimate in half spaces $(-T,T)^c\times\mathbb{R}^d$. Actually in $(T,\infty)$ the situation is more delicate and we will also use the dispersive relation. \\

Let us define $g(t)=\|v(t)\|^{\a+1}_{L^{(\a+1)r'}}$ and for fixed $\varepsilon>0$ let us consider $T=T(\varepsilon)>0$ such that:
\begin{equation}\label{smallness}
\begin{aligned}
\left\{ \begin{array}{ll}
\||v|^{\alpha}v\|_{L^{q'}_{(-\infty,-T)}L^{r'}}<\varepsilon\\
\||v|^{\alpha}v\|_{L^{q'}_{(T,+\infty)}L^{r'}}<\varepsilon\\
\||v|^{\alpha}v\|_{L^1_{(-\infty,-T)}H^1
}<\varepsilon\\
\left\||t|^{-d\left(\frac 12 - \frac 1r\right)}\ast g(t)\right \|_{L^p_{(T,+\infty)}}<\varepsilon
\end{array} \right..
\end{aligned}
\end{equation}
The existence of such a $T$ is guaranteed by the integrability properties of $v$ and its decay at infinity (in time). We can assume without loss of generality that $|t_n|>T.$\\
We split the term to be estimated as follows:
\begin{equation*}
\begin{split}
\int_{-t_n}^{t}e^{i(t-s)\Delta}\tau_{x_n}(|v|^{\alpha} v)(s)ds- \int_{-t_n}^te^{i(t-s)(\Delta-V)}\tau_{x_n}(|v|^{\alpha} v)(s)ds\\
=e^{it\Delta}\int_{-t_n}^{-T}e^{-is\Delta}\tau_{x_n}(|v|^{\alpha} v)(s)ds- e^{it(\Delta-V)}\int_{-t_n}^{-T}e^{-is(\Delta-V)}\tau_{x_n}(|v|^{\alpha} v)(s)ds\\
+\int_{-T}^{t}e^{i(t-s)\Delta}\tau_{x_n}(|v|^{\alpha} v)(s)ds- \int_{-T}^te^{i(t-s)(\Delta-V)}\tau_{x_n}(|v|^{\alpha} v)(s)ds.
\end{split}
\end{equation*}
By Strichartz estimate \eqref{fxc3} and the third one of \eqref{smallness}, we have, uniformly in $n,$
\begin{equation*}
\begin{aligned}
\bigg\|e^{it\Delta}\int_{-t_n}^{-T}e^{-is\Delta}\tau_{x_n}(|v|^{\alpha} v)(s)ds\bigg\|_{L^pL^r}&\lesssim\eps,\\ 
\bigg\|e^{it(\Delta-V)}\int_{-t_n}^{-T}e^{-is(\Delta-V)}\tau_{x_n}(|v|^{\alpha} v)(s)ds \bigg\|_{L^{p}L^{r}}&\lesssim\eps.
\end{aligned}
\end{equation*}
Thus, it remains to prove 
\begin{equation*}
\bigg\|\int_{-T}^{t}e^{i(t-s)\Delta}\tau_{x_n}(|v|^{\alpha} v)(s)ds- \int_{-T}^te^{i(t-s)(\Delta-V)}\tau_{x_n}(|v|^{\alpha} v)(s)ds \bigg\|_{L^{p}L^{r}}\overset{n\ra\infty}\longrightarrow0
\end{equation*}
and we split it by estimating it in the regions mentioned above. 
By using  \eqref{str2.4} and the first one of \eqref{smallness} we get uniformly in $n$ the following estimates:
\begin{equation*}
\begin{aligned}
\bigg\|\int_{-T}^{t}e^{i(t-s)\Delta}\tau_{x_n}(|v|^{\alpha} v)(s)ds\bigg\|_{L^{p}_{(-\infty,-T)}L^{r}}&\lesssim\||v|^{\alpha}v\|_{L^{q'}_{(-\infty,-T)}L^{r'}}\lesssim\varepsilon,\\
\bigg\|\int_{-T}^{t}e^{i(t-s)(\Delta-V)}\tau_{x_n}(|v|^{\alpha} v)(s)ds\bigg\|_{L^{p}_{(-\infty,-T)}L^{r}}&\lesssim\||v|^{\alpha}v\|_{L^{q'}_{(-\infty,-T)}L^{r'}}\lesssim\varepsilon.
\end{aligned}
\end{equation*}
The difference $w_n=\int_{-T}^{t}e^{i(t-s)\Delta}\tau_{x_n}(|v|^{\alpha} v)(s)ds- \int_{-T}^te^{i(t-s)(\Delta-V)}\tau_{x_n}(|v|^{\alpha} v)(s)ds$ satisfies the following Cauchy problem:
\begin{equation*}
\left\{ \begin{aligned}
i\partial_{t}w_n+(\Delta-V)w_n&=-V\int_{-T}^te^{i(t-s)\Delta}\tau_{x_n}(|v|^\a v)(s)\,ds\\
w_n(-T)&=0
\end{aligned}
\right..
\end{equation*}
Then $w_n$ satisfies the integral equation
\begin{equation*}
\begin{aligned}
w_n(t)=\int_{-T}^te^{i(t-s)(\Delta-V)}\bigg(-V\int_{-T}^se^{i(s-\sigma)\Delta}\tau_{x_n}(|v|^\a v)(\sigma)\,d\sigma\bigg)\,ds
\end{aligned}
\end{equation*}
which we estimate in the region $(-T,T)\times\mathbb{R}^d.$ By Sobolev embedding $H^1\hookrightarrow L^r,$ H\"older and Minkowski inequalities we have
therefore  
\begin{equation*}
\begin{aligned}
\bigg\|\int_{-T}^te^{i(t-s)(\Delta-V)}\bigg(-V\int_{-T}^se^{i(s-\sigma)\Delta}\tau_{x_n}(|v|^\a v)(\sigma)\,d\sigma\bigg)\,ds\bigg\|_{L^p_{(-T,T)}L^r}\lesssim\\
\lesssim T^{1/p}\int_{-T}^T\bigg\|(\tau_{-x_n}V)\int_{-T}^se^{i(s-\sigma)\Delta}|v|^\a v(\sigma)\,d\sigma\bigg\|_{H^1}\,ds\lesssim\varepsilon
\end{aligned}
\end{equation*}
by means of Lebesgue's theorem.
\\
What is left is to estimate in the region $(T,\infty)\times\R^d$ the terms 
$$\int_{-T}^te^{i(t-s)(\Delta-V)}\tau_{x_n}(|v|^\a v)\,ds\hbox{\qquad and \qquad}\int_{-T}^te^{i(t-s)\Delta}\tau_{x_n}(|v|^\a v)\,ds.$$
We consider only one term being the same for the other. Let us split the estimate as follows:
\begin{equation*}
\begin{aligned}
\bigg\|\int_{-T}^t&e^{i(t-s)(\Delta-V)}\tau_{x_n}(|v|^\a v)\,ds\bigg\|_{L^p_{(T,\i)}L^r}\leq\\
&\leq\bigg\|\int_{-T}^{T}e^{i(t-s)(\Delta-V)}\tau_{x_n}(|v|^\a v)\,ds\bigg\|_{L^p_{(T,\i)}L^r}\\
&\quad+\bigg\|\int_{T}^te^{i(t-s)(\Delta-V)}\tau_{x_n}(|v|^\a v)\,ds\bigg\|_{L^p_{(T,\i)}L^r}.
\end{aligned}
\end{equation*}
The second term is controlled by Strichartz estimates, and it is $\lesssim\varepsilon$ since we are integrating in the region where $\||v|^\a v\|_{L^{q'}((T,\i);L^{r'})}<\varepsilon$ (by using the second of \eqref{smallness}), while the first term is estimated by using the dispersive relation. More precisely 
\begin{equation*}
\begin{aligned}
\bigg\|\int_{-T}^{T}e^{i(t-s)(\Delta-V)}&\tau_{x_n}(|v|^\a v)\,ds\bigg\|_{L^p_{(T,\i)}L^r}\lesssim\\
&\lesssim\bigg\|\int_{-T}^{T}|t-s|^{-d\left(\frac{1}{2}-\frac{1}{r}\right)}\|v\|^{\alpha+1}_{L^{(\alpha+1)r'}}ds \bigg\|_{L^p_{(T,\infty)}}\\
&\lesssim\bigg\|\int_{\R}|t-s|^{-d\left(\frac{1}{2}-\frac{1}{r}\right)}\|v\|^{\alpha+1}_{L^{(\alpha+1)r'}}
ds \bigg\|_{L^p_{(T,\infty)}}\lesssim\varepsilon
\end{aligned}
\end{equation*}
where in the last step we used Hardy-Sobolev-Littlewood inequality and the fourth of \eqref{smallness}.

\end{proof}

As consequences of the previous lemma we obtain the following corollaries.
\begin{corollary}\label{cor5.6} 
Let $\{x_n\}_{n\in\mathbb{N}}\subset\mathbb{R}^d$ be a sequence such that 
$x_{n,1}
\overset{n\ra \infty}\longrightarrow-\infty$
and let $v\in\mathcal{C}(\mathbb{R};H^1)$ be a solution to 
\eqref{NLS-d} with initial datum $\psi\in H^1$. Then for a sequence $\{t_n\}_{n\in\mathbb{N}}$ such that $|t_n|\overset{n\ra\infty}\longrightarrow\infty$
\begin{equation*}
v_n(t,x)=e^{it(\Delta-V)}\psi_n-i\int_{0}^{t}e^{i(t-s)(\Delta-V)}\big(|v_{n}|^{\alpha}v_{n}\big)ds+e_n(t,x)
\end{equation*}
where $\psi_n:=e^{-it_n(\Delta -V)}\tau_{x_n}\psi,$ $v_n:=v(t-t_n,x-x_n)$ and $\|e_n\|_{L^pL^r}\overset{n\ra\infty}\longrightarrow 0$.  \end{corollary}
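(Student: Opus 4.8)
The plan is to read the expansion off the Duhamel formula for $v_n$ together with the two convergences supplied by \autoref{lem5.5}, in the same spirit as the proof of \autoref{cor5.3} but carrying along the time translation $t_n$. Recall that $v$, being a defocusing $L^2$-supercritical solution of \eqref{NLS-d}, scatters in $H^1$ (by the results recalled in the introduction); in particular $v\in L^pL^r$, which is the integrability assumed in \autoref{lem5.5}. Note moreover that, $v$ being a scattering solution, $\|v(-t_n)-e^{-it_n\Delta}\psi\|_{H^1}\to0$ (with $\psi$ understood as the relevant asymptotic state of $v$) once $t_n$ is assumed, along a subsequence, to have a fixed sign, say $t_n\to-\infty$ and hence $-t_n\to+\infty$; the case $t_n\to+\infty$ is symmetric.

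I would then observe that $v_n(t,x)=v(t-t_n,x-x_n)$ solves \eqref{NLS-d} with datum $v_n(0)=\tau_{x_n}v(-t_n)$, so that Duhamel's formula reads
\[
v_n(t)=e^{it\Delta}\tau_{x_n}v(-t_n)-i\int_0^t e^{i(t-s)\Delta}\big(|v_n|^\alpha v_n\big)\,ds ,
\]
and upgrade both terms on the right-hand side. For the nonlinear term, \eqref{eq517} of \autoref{lem5.5} — whose hypotheses $x_{n,1}\to-\infty$ and $|t_n|\to\infty$ are exactly in force — replaces $e^{i(t-s)\Delta}$ by $e^{i(t-s)(\Delta-V)}$ up to an error that is $o(1)$ in $L^pL^r$. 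For the linear term, the $H^1$-approximation of $v(-t_n)$ and the Strichartz bound \eqref{fxc3} give
\[
e^{it\Delta}\tau_{x_n}v(-t_n)=e^{it\Delta}\tau_{x_n}e^{-it_n\Delta}\psi+o(1)_{L^pL^r}=e^{i(t-t_n)\Delta}\tau_{x_n}\psi+o(1)_{L^pL^r},
\]
and, since the $L^pL^r$ norm is invariant under translations of the time variable, \eqref{eq5.1} turns this into $e^{i(t-t_n)(\Delta-V)}\tau_{x_n}\psi+o(1)_{L^pL^r}=e^{it(\Delta-V)}\psi_n+o(1)_{L^pL^r}$, where $\psi_n=e^{-it_n(\Delta-V)}\tau_{x_n}\psi$. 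Combining the two upgrades yields the asserted identity with $\|e_n\|_{L^pL^r}\to0$.

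I expect the only genuinely delicate point to be the linear term: one cannot commute $e^{i(t-t_n)(\Delta-V)}$ past the spatial translation $\tau_{x_n}$, so it is essential both that $-t_n\to+\infty$, which allows scattering of $v$ to replace $v(-t_n)$ by $e^{-it_n\Delta}\psi$ in $H^1$, and that the discrepancy between the free and the perturbed propagators is measured in a time-translation-invariant space-time norm, so that the spatial asymptotics $x_{n,1}\to-\infty$ (where $V\to0$) can be exploited via \eqref{eq5.1}. The rest is routine bookkeeping with Duhamel's formula and Strichartz estimates, exactly as in \autoref{cor5.3}.
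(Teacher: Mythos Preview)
Your approach is the paper's: \autoref{cor5.6} is presented there simply as a consequence of \autoref{lem5.5}, and you spell out exactly how---Duhamel for $v_n$, the second estimate \eqref{eq517} for the nonlinear piece, and the first estimate of \autoref{lem5.5} (equivalently \eqref{eq5.1} after a time shift) for the linear piece.

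One point deserves emphasis. The corollary as printed calls $\psi$ the \emph{initial datum} of $v$, but your argument---and \autoref{lem5.5} itself---needs $\psi$ to be the \emph{scattering state} $\psi_\pm$ of $v$ (at $+\infty$ if $t_n\to-\infty$, and vice versa). You already flag this in your parenthetical ``with $\psi$ understood as the relevant asymptotic state of $v$'', and that reading is the correct one: if one insisted on $\psi=v(0)$, the error $e_n$ would contain the extra term $e^{i(t-t_n)(\Delta-V)}\tau_{x_n}\bigl(v(0)-\psi_\pm\bigr)$, whose $L^pL^r$ norm equals $\|e^{it(\Delta-V)}\tau_{x_n}(v(0)-\psi_\pm)\|_{L^pL^r}$ and, by \eqref{eq5.1} and translation invariance of the free Strichartz norm, tends to the fixed nonzero quantity $\|e^{it\Delta}(v(0)-\psi_\pm)\|_{L^pL^r}$. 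So the literal ``initial datum'' is a misstatement; with $\psi$ read as the asymptotic state (as in \autoref{lem5.5}), your proof is complete and matches the paper's.
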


\begin{corollary}\label{cor5.7}
Let $\{x_n\}_{n\in\mathbb{N}}\subset\mathbb{R}^d$ be a sequence such that 
$x_{n,1}
\overset{n\ra \infty}\longrightarrow+ \infty$
and let $v\in\mathcal{C}(\mathbb{R};H^1)$ be a solution to 
\eqref{NLS1-d} with initial datum $\psi\in H^1$. Then for a sequence $\{t_n\}_{n\in\mathbb{N}}$ such that $|t_n|\overset{n\ra\infty}\longrightarrow \infty$
\begin{equation*}
v_n(t,x)=e^{it(\Delta-V)}\psi_n-i\int_{0}^{t}e^{i(t-s)(\Delta-V)}\big(|v_{n}|^{\alpha}v_{n}\big)ds+e_n(t,x)
\end{equation*}
where $\psi_n:=e^{-it_n(\Delta -V)}\tau_{x_n}\psi,$ $v_n:=v(t-t_n,x-x_n)$ and $\|e_n\|_{L^pL^r}\overset{n\ra\infty}\longrightarrow 0$.  
\end{corollary}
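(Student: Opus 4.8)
The plan is to deduce \autoref{cor5.7} from \autoref{lem5.5} (read in its ``resp.'' branch, i.e.\ for solutions of \eqref{NLS1-d} and for sequences with $x_{n,1}\to+\infty$) in exactly the way \autoref{cor5.4} follows from \autoref{lem5.1}--\autoref{lem5.2}; equivalently, it is the $x_{n,1}\to+\infty$ counterpart of \autoref{cor5.6}, obtained by interchanging the roles of $e^{it\Delta}$ and $e^{it(\Delta-1)}$ and of the half-spaces $\{x_1\ll0\}$ and $\{x_1\gg0\}$. First I would record the two standing properties of $v$: being a global solution of the defocusing problem \eqref{NLS1-d} with $4/d<\alpha<4/(d-2)$, it belongs to $L^pL^r$ and it scatters in $H^1$, i.e.\ there exist $\varphi_\pm\in H^1$ with $\|v(t)-e^{it(\Delta-1)}\varphi_\pm\|_{H^1}\to0$ as $t\to\pm\infty$ (Nakanishi \cite{N} in $1D$, and the standard energy-space scattering theory in higher dimension). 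This is exactly what allows \autoref{lem5.5} to be invoked, and it also fixes the role of the datum $\psi$ in the statement: when $t_n\to\pm\infty$ the profile $\psi$ plays the part of the asymptotic state $\varphi_\mp$, so that $\psi_n=e^{-it_n(\Delta-V)}\tau_{x_n}\psi$ is genuinely close to $v_n(0)$.

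Then I would proceed in four steps. \emph{Step 1.} Write Duhamel's formula for $v$ with base point $-t_n$, apply the spatial translation $\tau_{x_n}$ (which commutes with $e^{it(\Delta-1)}$) and change variables $s\mapsto s-t_n$ in the time integral, to reach the \emph{exact} identity
\[
v_n(t)=e^{it(\Delta-1)}\tau_{x_n}v(-t_n)-i\int_0^te^{i(t-s)(\Delta-1)}\bigl(|v_n|^\alpha v_n\bigr)(s)\,ds .
\]
\emph{Step 2.} Up to a subsequence assume $t_n\to+\infty$ (the case $t_n\to-\infty$ is symmetric); by scattering $v(-t_n)=e^{-it_n(\Delta-1)}\varphi_-+o_{H^1}(1)$, and since $\tau_{x_n}$ is an isometry of $H^1$ and $\|e^{it(\Delta-1)}f\|_{L^pL^r}\lesssim\|f\|_{H^1}$ by Strichartz, the linear term equals $e^{i(t-t_n)(\Delta-1)}\tau_{x_n}\varphi_-$ up to an $L^pL^r$-error that tends to $0$. \emph{Step 3.} Transfer the free-type flow to $e^{it(\Delta-V)}$ via the first ``resp.'' estimate of \autoref{lem5.5}, applied with the function $\varphi_-$ and with $x_{n,1}\to+\infty$, $|t_n|\to\infty$: $\|e^{i(t-t_n)(\Delta-1)}\tau_{x_n}\varphi_--e^{i(t-t_n)(\Delta-V)}\tau_{x_n}\varphi_-\|_{L^pL^r}\to0$, and observe $e^{i(t-t_n)(\Delta-V)}\tau_{x_n}\varphi_-=e^{it(\Delta-V)}\psi_n$ by the group property. \emph{Step 4.} Transfer the nonlinear Duhamel term via the second ``resp.'' estimate of \autoref{lem5.5}: $\|\int_0^t[e^{i(t-s)(\Delta-1)}-e^{i(t-s)(\Delta-V)}]\bigl(|v_n|^\alpha v_n\bigr)(s)\,ds\|_{L^pL^r}\to0$. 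Gathering the three $L^pL^r$-small contributions into $e_n$ gives the claimed representation.

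The part I expect to demand the most care is organizational rather than analytic: tracking the time translation $t_n$ — here one uses repeatedly that the norm $\|\cdot\|_{L^pL^r}$ over $\mathbb{R}\times\mathbb{R}^d$ is invariant under translations in $t$, which is what legitimizes the identifications in Steps~2 and~3 — and correctly pairing the sign of $t_n$ with the asymptotic state $\varphi_\mp$ that must be fed into \autoref{lem5.5}. All the genuinely new estimates (the dispersive and Strichartz bounds comparing $e^{it(\Delta-1)}$ with $e^{it(\Delta-V)}$ in the regime $x_{n,1}\to+\infty$, including the delicate control on the tail $(T,\infty)$ obtained through \eqref{disp2} and the Hardy--Littlewood--Sobolev inequality) are already packaged in \autoref{lem5.1}, \autoref{lem5.2} and \autoref{lem5.5}, so the proof of \autoref{cor5.7} needs no input beyond assembling these.
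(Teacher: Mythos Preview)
Your assembly of \autoref{lem5.5} into \autoref{cor5.7} is exactly what the paper intends (the paper gives no proof beyond listing the corollary as a consequence of \autoref{lem5.5}), and Steps~1--4 are the right skeleton. There is, however, one point you half-acknowledge but do not resolve cleanly. The corollary, read literally, takes $\psi=v(0)$ to be the \emph{initial datum}; with that reading the identity in your Step~3,
\[
e^{i(t-t_n)(\Delta-V)}\tau_{x_n}\varphi_-=e^{it(\Delta-V)}\psi_n,
\]
is simply false, since $\psi_n=e^{-it_n(\Delta-V)}\tau_{x_n}\psi$ carries $\psi=v(0)$ while the left side carries the scattering state $\varphi_-$, and by \eqref{eq5.2} one has
\[
\|e^{it(\Delta-V)}\tau_{x_n}(\psi-\varphi_-)\|_{L^pL^r}\overset{n\to\infty}\longrightarrow\|e^{it(\Delta-1)}(\psi-\varphi_-)\|_{L^pL^r}\neq0
\]
whenever $\psi\neq\varphi_-$. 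So the corollary as literally worded does not follow from \autoref{lem5.5}.

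What \emph{does} follow --- and what is actually used in \autoref{lemcri} --- is the version in which $v$ is the solution of \eqref{NLS1-d} whose \emph{scattering state} at $\mp\infty$ (according to the sign of $t_n$) equals $\psi$; this is precisely the role $\psi$ plays in \autoref{prop5.8}. You clearly sense this (``the profile $\psi$ plays the part of the asymptotic state $\varphi_\mp$''), and under that reading your proof is correct and complete. But you should state this as an explicit correction of the hypothesis ``with initial datum $\psi$'' rather than slipping the reinterpretation into a parenthetical remark and then invoking ``the group property'' for an equality that otherwise does not hold.
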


We shall also need the following results, for whose proof we refer to \cite{BV}.
\begin{prop}\label{prop5.8}
Let $\psi\in H^1.$ There exists $\hat{U}_{\pm}\in\mathcal{C}(\mathbb{R}_{\pm};H^1)\cap L^{p}_{\mathbb{R}_{\pm}}L^r$ solution to \eqref{NLSV-d} such that
\begin{equation*}
\|\hat{U}_{\pm}(t,\cdot)-e^{-it(\Delta-V)}\psi\|_{H^1}\overset{t\rightarrow\pm\infty}\longrightarrow0.
\end{equation*}
Moreover, if $t_n\rightarrow\mp\infty$, then 
\begin{equation*}
\hat{U}_{\pm,n}=e^{it(\Delta-V)}\psi_n-i\int_{0}^{t}e^{i(t-s)(\Delta-V)}\big(|\hat{U}_{\pm,n}|^{\alpha}\hat{U}_{\pm,n}\big)ds+h_{\pm,n}(t,x)
\end{equation*}
where $\psi_n:=e^{-it_n(\Delta-V)}\psi,$ $\hat{U}_{\pm,n}(t,\cdot)=:\hat{U}_{\pm}(t-t_n,\cdot)$ and $\|h_{\pm,n}(t,x)\|_{L^pL^r}\overset{n\ra\infty}\longrightarrow 0.$
\end{prop}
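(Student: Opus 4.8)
The plan is to regard \autoref{prop5.8} as the classical construction of a nonlinear solution of \eqref{NLSV-d} with prescribed linear behaviour at $\pm\i$ (a \emph{wave operator}), together with a stability statement for this solution under large time translations. Since one translates here in \emph{time} only, and $\{e^{it(\Delta-V)}\}_{t\in\R}$ is a group, none of the structural hypotheses on $V$ (repulsivity, periodicity, spatial decay) enters the argument: the only ingredients are the dispersive estimate \eqref{disp-d} (hence \eqref{fxc1}--\eqref{str2.4}) and the boundedness and nonnegativity of $V$, which through \eqref{consmass}--\eqref{consen} give global well-posedness of \eqref{NLSV-d} in $H^1$ with an a priori bound on the $H^1$ norm. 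I will treat only the ``$+$'' case, the ``$-$'' case being symmetric.

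\emph{Construction of $\hat U_+$.} First I would fix $T>0$ large and solve, by a contraction argument in a suitable complete subset of $\mathcal C([T,\i);H^1)\cap L^p_{(T,\i)}L^r$ (built from the norms in \eqref{fxc1}--\eqref{fxc3}), the Duhamel equation with base point at $+\i$,
\[
\hat U_+(t)=e^{it(\Delta-V)}\psi+i\int_{t}^{+\i}e^{i(t-s)(\Delta-V)}\big(|\hat U_+|^{\a}\hat U_+\big)(s)\,ds .
\]
The key facts are: $e^{it(\Delta-V)}\psi\in L^pL^r$ by \eqref{fxc3}, so that $\|e^{it(\Delta-V)}\psi\|_{L^p_{(T,\i)}L^r}\to 0$ as $T\to\i$ (this smallness, and not smallness of $\psi$, drives the contraction, the $H^1$ size being merely propagated); the inhomogeneous non-admissible estimate \eqref{str2.4}; and the nonlinear Duhamel bound $\big\|\int_0^{(\cdot)}e^{i((\cdot)-s)(\Delta-V)}\big(|u|^{\a}u\big)\,ds\big\|_{L^pL^r}\lesssim\|u\|^{\a+1}_{L^pL^r}$, obtained from \eqref{disp-d} and the Hardy-Littlewood-Sobolev inequality exactly as in the proof of \autoref{lem5.2}. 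Next I would extend $\hat U_+$ from $[T,\i)$ to all of $\R_+$: by the global $H^1$ well-posedness of \eqref{NLSV-d} and the fact that $H^1$-solutions lie in $L^pL^r$ on every bounded time interval (via $H^1\hookrightarrow L^r$ and a finite-interval Strichartz argument), one gets $\hat U_+\in\mathcal C(\R_+;H^1)\cap L^p_{\R_+}L^r$. Finally, reading off the integral equation, $\hat U_+(t)-e^{it(\Delta-V)}\psi=i\int_t^{+\i}e^{i(t-s)(\Delta-V)}\big(|\hat U_+|^{\a}\hat U_+\big)(s)\,ds$, whose $H^1$ norm is bounded, via the $H^1$ Strichartz estimates, by a quantity controlled by the tail $\|\hat U_+\|_{L^p_{(t,\i)}L^r}$; this vanishes as $t\to+\i$ since $\hat U_+\in L^p_{\R_+}L^r$, which is the announced convergence to the linear profile $e^{it(\Delta-V)}\psi$ (the profile of the statement, up to the sign convention for the propagator).

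\emph{The translation identity.} Given $t_n\to-\i$, put $\hat U_{+,n}(t):=\hat U_+(t-t_n)$, which for $n$ large is defined on every fixed time interval. Starting from the base-point-$+\i$ equation above, the substitution $\sigma=s+t_n$ (which turns $|\hat U_+|^{\a}\hat U_+$ into $|\hat U_{+,n}|^{\a}\hat U_{+,n}$) together with the group law $e^{i(t-t_n)(\Delta-V)}\psi=e^{it(\Delta-V)}\psi_n$ gives
\[
\hat U_{+,n}(t)=e^{it(\Delta-V)}\psi_n+i\int_{t}^{+\i}e^{i(t-\sigma)(\Delta-V)}\big(|\hat U_{+,n}|^{\a}\hat U_{+,n}\big)(\sigma)\,d\sigma ,
\]
and splitting $\int_t^{+\i}=\int_0^{+\i}-\int_0^{t}$ produces exactly the asserted decomposition with
\[
h_{+,n}(t)=i\int_{0}^{+\i}e^{i(t-\sigma)(\Delta-V)}\big(|\hat U_{+,n}|^{\a}\hat U_{+,n}\big)(\sigma)\,d\sigma .
\]
After a translation in $t$ (harmless for the $L^pL^r$ norm) and the change of variable $\sigma\mapsto\sigma-t_n$ inside the integral, $h_{+,n}$ becomes the Duhamel term of $|\hat U_+|^{\a}\hat U_+$ over the window $(-t_n,\i)$; hence, by \eqref{str2.4}, \eqref{disp-d} and Hardy-Littlewood-Sobolev as in \autoref{lem5.2}, $\|h_{+,n}\|_{L^pL^r}\lesssim\|\hat U_+\|^{\a+1}_{L^p_{(-t_n,\i)}L^r}\to 0$, because $-t_n\to+\i$ and $\hat U_+\in L^p_{\R_+}L^r$. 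This is the content of the ``moreover'' part.

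\emph{Main obstacle.} Once the Strichartz package \eqref{fxc1}--\eqref{str2.4} and the dispersive bound \eqref{disp-d} are taken as black boxes, no genuine analytical difficulty remains; the points requiring care are that \emph{all} estimates must be performed in the non-admissible space $L^pL^r$ --- so that the Hardy-Littlewood-Sobolev mechanism of \autoref{lem5.2} and the estimate \eqref{str2.4}, rather than classical Strichartz inequalities, are what close both the contraction and the tail bounds --- and the bookkeeping of the two time substitutions $s\mapsto s\pm t_n$ and of the re-centering $\int_t^{+\i}=\int_0^{+\i}-\int_0^{t}$, so that the discrepancy between the ``base point $+\i$'' and ``base point $0$'' forms of Duhamel's formula is correctly identified with a vanishing tail of the finite quantity $\|\hat U_+\|_{L^p_{\R_+}L^r}$; this is exactly the place where the sign $t_n\to-\i$ is used.
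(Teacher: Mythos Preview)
Your proposal is correct and follows precisely the standard wave-operator construction and time-translation argument that the paper has in mind; indeed the paper does not give a proof of \autoref{prop5.8} at all but simply refers the reader to \cite{BV}, whose argument is essentially the one you have reproduced. Your identification of the error term as $h_{+,n}(t)=i\int_0^{+\infty}e^{i(t-\sigma)(\Delta-V)}\big(|\hat U_{+,n}|^\alpha\hat U_{+,n}\big)(\sigma)\,d\sigma$ and its control via the dispersive bound plus Hardy--Littlewood--Sobolev (yielding $\|h_{+,n}\|_{L^pL^r}\lesssim\|\hat U_+\|_{L^p_{(-t_n,\infty)}L^r}^{\alpha+1}\to 0$) is exactly the mechanism used throughout \autoref{nonlinearpro}, and your remark on the sign of the propagator is well taken --- the linear profile in the statement should read $e^{it(\Delta-V)}\psi$, consistently with the Duhamel formula and with how $\psi_n$ is used.
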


\section{Existence and extinction of the critical element}\label{critical}

In view of the results stated in \autoref{perturbative}, we define 
the following quantity belonging to $(0, \infty]$: 
\begin{align*}
E_{c}=\sup\bigg\{ &E>0 \textit{ such that if } \varphi\in H^1\, \textit{with } E(\varphi)<E\\\notag
&\textit{then the solution of \eqref{NLSV-d} with initial data } \varphi \textit{ is in } L^{p}L^{r}\bigg\}.
\end{align*}
Our aim is to show that $E_c=\infty$ and hence we get the large data scattering.

\subsection{Existence of the Minimal Element}

\begin{prop}\label{lemcri}
Suppose $E_{c}<\infty.$ Then there exists $\varphi_{c}\in H^1$,
$\varphi_{c}\not\equiv0$, such that the corresponding global  solution $u_{c}(t,x)$
to \eqref{NLSV-d} does not scatter. Moreover, there exists $\bar x(t)\in\R^{d-1}$ such that  $\left\{ u_{c}(t, x_1,\bar x-\bar x(t))\right\}_{t\in\R^+} $
is a relatively compact subset in $H^{1}$.
\end{prop}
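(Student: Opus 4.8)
The plan is to follow the now-classical Kenig--Merle concentration-compactness scheme, using the profile decomposition of \autoref{profiledec} together with the nonlinear profile analysis of \autoref{nonlinearpro}. Assume $E_c<\infty$. Then by definition of $E_c$ there exists a sequence $\varphi_n\in H^1$ with $E(\varphi_n)\to E_c$ such that the corresponding solutions $u_n$ of \eqref{NLSV-d} satisfy $\|u_n\|_{L^pL^r}=\infty$ (or at least $\to\infty$). Since $E(\varphi_n)$ is bounded and $V\geq0$, the sequence $\{\varphi_n\}$ is bounded in $H^1$, so we may apply \autoref{profiledec} to extract profiles $\psi^j$ with parameters $(t_n^j,x_n^j)$ and remainder $R_n^J$. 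To each profile we associate a \emph{nonlinear profile}: if $t_n^j\equiv0$ and $x_n^j\equiv0$ (or converges), the nonlinear profile solves \eqref{NLSV-d} with datum $\psi^j$; if $|x_{n,1}^j|\to\pm\infty$, by \autoref{lem5.1}--\autoref{cor5.4} (and \autoref{lem5.5}, \autoref{cor5.6}, \autoref{cor5.7} when also $|t_n^j|\to\infty$) the nonlinear profile is well-approximated by a solution of the limiting constant-coefficient equation \eqref{NLS-d} or \eqref{NLS1-d}, which scatters by Nakanishi's theorem and hence lies in $L^pL^r$; if $x_{n,1}^j=0$ but $|\bar x_n^j|\to\infty$ along the lattice, periodicity of $V$ in $\bar x$ means the nonlinear profile is a translate of a genuine solution of \eqref{NLSV-d}.

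The heart of the argument is the usual dichotomy. Using the energy Pythagorean expansion \eqref{energypd} together with the $L^2$ and $\mathcal H$ expansions, one shows $\sum_j E(\psi^j)\leq E_c$ (all energies nonnegative thanks to $V\geq0$ and defocusing sign). If more than one profile were nontrivial, each would have energy strictly less than $E_c$, hence by definition of $E_c$ each nonlinear profile would be global and lie in $L^pL^r$; then the perturbative stability \autoref{lemma3} (fed with the sum of nonlinear profiles as approximate solution, the error going to zero in $L^pL^r$ by the orthogonality of parameters and the smallness of the remainder $R_n^J$) would force $u_n\in L^pL^r$ with uniformly bounded norm for $n$ large --- a contradiction. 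Therefore there is exactly one profile, $u_n=e^{it_n^1(\Delta-V)}\tau_{x_n^1}\psi^1+R_n^1$ with $\|e^{it(\Delta-V)}R_n^1\|_{L^pL^r}\to0$ and $E(\psi^1)\le E_c$. A further application of \autoref{lemma3} shows the single nonlinear profile cannot itself be in $L^pL^r$, and that its scaling/translation parameters cannot run off to where it would be governed by a scattering limiting equation; hence $t_n^1\equiv0$, the parameter $x_{n,1}^1$ stays bounded, and we may take $\varphi_c=\psi^1$ (after a fixed $x_1$-translation), whose solution $u_c$ is global but not in $L^pL^r$, hence by \autoref{lemma3.1} does not scatter. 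Also $E(\varphi_c)=E_c$, so $\varphi_c\not\equiv0$.

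For the compactness statement, run the same profile-decomposition argument on the sequence $\{u_c(\tau_n,\cdot)\}$ for an arbitrary time sequence $\tau_n\in\R^+$. Since $u_c\notin L^pL^r$ on $[0,\infty)$, the minimality of $E_c$ again forces a single profile in the decomposition of $u_c(\tau_n)$, and the profile's $t$-parameter must be $0$; the only freedom left is the translation parameter. Because repulsivity and the steplike structure are only in $x_1$, the $x_1$-component of the translation must stay bounded (otherwise the limiting dynamics would scatter, contradicting non-scattering of the critical element), while the $\bar x$-component is free; absorbing it into $\bar x(t)$ gives that $\{u_c(t,x_1,\bar x-\bar x(t))\}_{t\ge0}$ is precompact in $H^1$.

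The main obstacle I expect is handling the profiles whose spatial centers escape to $x_{1}=\pm\infty$ while possibly $|t_n^j|\to\infty$ simultaneously: one must show their nonlinear profiles are approximated in $L^pL^r$ by scattering solutions of the limiting \emph{constant-coefficient} problems \eqref{NLS-d}/\eqref{NLS1-d}, which is precisely the content of \autoref{lem5.5} and \autoref{cor5.6}--\autoref{cor5.7}; feeding all these asymptotically-decoupled nonlinear profiles into the single stability estimate \autoref{lemma3} (and verifying the error term genuinely vanishes in $L^pL^r$, using orthogonality of the parameters to kill cross terms in the nonlinearity) is the delicate bookkeeping step. A secondary subtlety is that the potential is not translation invariant, so the energy functional $E$ is not conserved under the translations appearing in the profile decomposition; this is exactly why \autoref{profiledec} is stated with the $\mathcal H$-norm expansion involving $\|\tau_{x_n^j}\psi^j\|_{\mathcal H}$ rather than $\|\psi^j\|_{\mathcal H}$, and one must track these translated energies carefully when summing.
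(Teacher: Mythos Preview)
Your proposal is correct and follows essentially the same concentration-compactness/rigidity scheme as the paper: profile decomposition on a minimizing/critical sequence, construction of nonlinear profiles case-by-case (using \autoref{cor5.3}, \autoref{cor5.4}, \autoref{cor5.6}, \autoref{cor5.7}, \autoref{prop5.8}, and the periodicity in $\bar x$ for lattice translations), reduction to a single profile via the perturbation \autoref{lemma3}, and then a second pass of the same argument on $\{u_c(\tau_n,\cdot)\}$ to obtain precompactness modulo $\bar x$-translation. The paper's own proof is in fact more telegraphic than yours---it lists the twelve parameter cases and then defers the details to \cite{BV} and \cite{FXC}---so your write-up, including the remarks on why the $x_1$-translation cannot escape and on tracking $\|\tau_{x_n^j}\psi^j\|_{\mathcal H}$ rather than $\|\psi^j\|_{\mathcal H}$, is entirely in line with (and somewhat more explicit than) the paper.
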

\begin{proof}
If $E_{c}<\infty$, there exists a sequence $\varphi_{n}$ of elements of $H^{1}$ such that 
\begin{equation*}
E(\varphi_{n})\overset{n\rightarrow\infty}{\longrightarrow}E_{c},
\end{equation*}
and by denoting with $u_{n}\in \mathcal{C}(\mathbb{R};H^{1})$ the corresponding solution to \eqref{NLSV} with initial datum $\varphi_n$ then
\begin{equation*}
u_{n}\notin L^{p}L^{r}.
\end{equation*}
We apply the profile decomposition to $\varphi_{n}:$
\begin{equation}\label{profdec}
\varphi_{n}=\sum_{j=1}^{J}e^{-it_{n}^{j}(-\Delta +V)}\tau_{x_{n}^{j}}\psi^{j}+R_{n}^{J}.
\end{equation}
\begin{claim}\label{claim}
There exists only one non-trivial profile, that is $J=1$.
\end{claim}
Assume $J>1$. For $j\in\{1,\dots, J\}$ to each profile $\psi^{j}$ we associate a nonlinear profile $U_n^j$.  We can have one of the following situations, where we have reordered without loss of generality the cases in these way:
\begin{enumerate}
\item $(t_{n}^{j},x_{n}^{j})=(0,0)\in \R\times \R^d$,
\item $t_{n}^{j}=0$ and  $x_{n,1}^{j}\overset{n \rightarrow \infty}\longrightarrow-\infty,$ 
\item $t_{n}^{j}=0,$ and  $x_{n,1}^{j}\overset{n \rightarrow \infty}\longrightarrow +\infty,$ 
\item $t_{n}^{j}=0,$ $x_{n,1}^{j}=0$ and $|\bar x_{n}^{j}|\overset{n \rightarrow \infty}\longrightarrow\infty,$
\item $x_{n}^{j}=\vec 0$ and $t_{n}^{j}\overset{n \rightarrow \infty}\longrightarrow-\infty$,
\item $x_{n}^{j}=\vec 0$ and $t_{n}^{j}\overset{n \rightarrow \infty}\longrightarrow+\infty$, 
\item $x_{n,1}^{j}\overset{n\to \infty}\longrightarrow-\infty$ and  $t_{n}^{j}\overset{n \rightarrow \infty}\longrightarrow-\infty,$ 
\item $x_{n,1}^{j}\overset{n\to \infty}\longrightarrow-\infty$ and  $t_{n}^{j}\overset{n \rightarrow \infty}\longrightarrow+\infty,$ 
\item $x_{n,1}^{j}\overset{n\to \infty}\longrightarrow+\infty$ and $t_{n}^{j}\overset{n \rightarrow \infty}\longrightarrow-\infty,$ 
\item $x_{n,1}^{j}\overset{n\to \infty}\longrightarrow+\infty$ and $t_{n}^{j}\overset{n \rightarrow \infty}\longrightarrow+\infty,$ 
\item $x_{n,1}^{j}=0,$ $t_{n}^{j}\overset{n \rightarrow \infty}\longrightarrow-\infty$ and $|\bar x^j_n|\overset{n \rightarrow \infty}\longrightarrow\infty,$
\item $x_{n,1}^{j}=0,$ $t_{n}^{j}\overset{n \rightarrow \infty}\longrightarrow+\infty$ and $|\bar x^j_n|\overset{n \rightarrow \infty}\longrightarrow\infty.$
\end{enumerate}

Notice that despite to \cite{BV} we have twelve cases to consider and not six (this is because we have to consider a different behavior of $V(x)$ as $|x|\rightarrow \infty$). 
Since the argument to deal with the cases above is similar 
to the ones considered in \cite{BV} we skip the details. The main point is that
for instance in dealing with the cases 
$(2)$ and $(3)$ above we have to use respectively
\autoref{cor5.3} and \autoref{cor5.4}.
 
When instead $|\bar x_n^j|
\overset{n\to \infty}\longrightarrow \infty$ and $x_{1,n}^j=0$ we use the fact that this sequences can be assumed, according with the profile decomposition 
\autoref{profiledec}
 to have components which are integer multiples of the periods, so the translations and the nonlinear equation commute and if $|t_n|\overset{n\to \infty}\longrightarrow\infty$ we use moreover \autoref{prop5.8}. We skip the details.
Once it is proved that $J=1$ and 
\begin{equation*}
\varphi_n=e^{it_n(\Delta-V)}\psi+R_n
\end{equation*}
with $\psi\in H^1$ and $\underset{n\ra\infty}\limsup\|e^{it(\Delta-V)}R_n\|_{L^pL^r}=0,$ then the existence of the critical element follows now by \cite{FXC}, ensuring that, up to subsequence, $\varphi_n$ converges to $\psi$ in $H^1$ and so $\varphi_c=\psi.$ We define by $u_c$ the solution to \eqref{NLSV-d} with Cauchy datum $\varphi_c,$ and we call it critical element. This is the minimal (with respect to the energy) non-scattering solution to \eqref{NLSV-d}. We can assume therefore with no loss of generality that $\|u_c\|_{L^{p}((0,+\infty);L^r)}=\infty.$ The precompactenss of the trajectory up to translation by a path $\bar x(t)$ follows again by \cite{FXC}.
\end{proof}

\subsection{Extinction of the Minimal Element}

Next we show that the unique solution that satisfies the compactness properties of the minimal element $u_c(t,x)$ (see \autoref{lemcri}) is the trivial solution. Hence we get a contradiction
and we deduce that necessarily $E_c=\infty$.

The tool that we shall use is the following Nakanishi-Morawetz type estimate. 
\begin{lemma}
Let $u(t,x)$ be the solution to \eqref{NLSV-d}, where $V(x)$ satisfies
$x_1 \cdot \partial_{x_1}V (x)\leq 0$ for any $x\in\R^d,$ then
\begin{equation}\label{pote}
\int_\R\int_{\R^{d-1}}\int_\R\frac{t^2|u|^{\a+2}}{(t^2+x_1^2)^{3/2}}\,dx_1\,d\bar x\,dt
<\infty.
\end{equation}
\end{lemma}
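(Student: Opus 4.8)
The plan is to establish a Nakanishi-type Morawetz inequality with a time-dependent weight adapted to the $x_1$-direction, then integrate in time. First I would introduce the hyperbolic weight $\rho(t,x) = \sqrt{t^2 + x_1^2}$, which depends only on $(t,x_1)$, and consider the Morawetz action functional
\begin{equation*}
M(t) = 2\,\IM \int_{\R^d} \frac{x_1}{\rho(t,x)} \, \partial_{x_1} u(t,x)\, \overline{u(t,x)} \, dx.
\end{equation*}
Since $u$ is bounded in $H^1$ uniformly in $t$ (by mass and energy conservation) and $|x_1/\rho| \le 1$, the quantity $M(t)$ is bounded uniformly in $t \in \R$. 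The strategy is then to compute $\frac{d}{dt} M(t)$ using the equation \eqref{NLSV-d}, show that the resulting spacetime integrand is, up to terms with a good sign, bounded below by a positive multiple of the integrand in \eqref{pote}, and conclude by the fundamental theorem of calculus that the spacetime integral in \eqref{pote} is finite (controlled by $\sup_t |M(t)| < \infty$).

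The key computation is the differentiation of $M(t)$. Writing $a(t,x_1) = x_1/\rho$ for the multiplier (a function of $t$ and $x_1$ only), the standard Morawetz/virial machinery for $i\partial_t u = -\Delta u + Vu + |u|^\alpha u$ produces, after integration by parts, a sum of four types of terms: (i) a term $\int \partial_{x_1}^2 a \,|u|^2$ coming from the commutator with $\Delta$ (a lower-order ``error'' controlled by $H^1$ bounds since $\partial_{x_1}^2 a = t^2/\rho^3$ is integrable in $t$ against a bounded function — actually this contributes with a favorable structure), (ii) a positive ``kinetic'' term $\int \partial_{x_1} a \, |\partial_{x_1} u|^2 \gtrsim 0$ since $\partial_{x_1} a = t^2/\rho^3 \ge 0$, (iii) the nonlinear term, which after integration by parts yields $\frac{\alpha}{\alpha+2}\int \partial_{x_1} a\, |u|^{\alpha+2} = \frac{\alpha}{\alpha+2}\int \frac{t^2}{\rho^3}|u|^{\alpha+2}$ — precisely the integrand of \eqref{pote} with a positive constant — and (iv) the potential term $-\int a\, \partial_{x_1} V\, |u|^2 = -\int \frac{x_1 \partial_{x_1}V}{\rho}|u|^2$, which is $\ge 0$ exactly because of the repulsivity hypothesis $x_1 \partial_{x_1} V \le 0$. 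There is also the explicit time-derivative term $\int (\partial_t a)\, \IM(\partial_{x_1} u\, \bar u)$ coming from $\partial_t$ hitting the weight; since $\partial_t a = -tx_1/\rho^3$ one checks $|\partial_t a| \le 1/\rho \lesssim 1/|x_1|$ is controlled and its time integral is finite, or it can be absorbed/estimated by Cauchy–Schwarz against the $H^1$ bound together with integrability in $t$.

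I would then integrate the identity $\frac{d}{dt}M(t) = (\text{good terms}) + (\text{errors})$ over $t \in \R$: the left side telescopes to $M(+\infty) - M(-\infty)$, bounded by $2\sup_t|M(t)|$, which is finite; the good terms (ii), (iii), (iv) are all nonnegative, so each is individually bounded, and in particular $\int_\R \int_{\R^d} \frac{t^2}{\rho^3}|u|^{\alpha+2}\,dx\,dt < \infty$, which is \eqref{pote}. The main obstacle I anticipate is handling the error terms rigorously: the weight $a$ is not smooth at $(t,x_1)=(0,0)$ (it is Lipschitz but $\partial_{x_1}a$ has a jump-like behavior and $\partial_t a$ is singular there), so the formal computation must be justified by either regularizing $\rho \rightsquigarrow \sqrt{t^2 + x_1^2 + \delta}$ and passing to the limit $\delta \to 0$, or by working on $|t| \ge \epsilon$ and letting $\epsilon \to 0$; and the time-derivative-of-weight term (iv-bis) must be shown to be integrable rather than merely bounded, which uses that $|\partial_t a(t,x_1)| \lesssim |t|^{-1}$ for $|x_1| \lesssim |t|$ combined with $\sup_t \|u(t)\|_{H^1} < \infty$ — a careful but routine estimate. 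The repulsivity and the sign of $\partial_{x_1}a$ are what make all the genuinely nonlinear contributions cooperate.
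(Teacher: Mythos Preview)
Your overall strategy is right---this is a Nakanishi-type Morawetz estimate with the hyperbolic weight $\rho=\sqrt{t^2+x_1^2}$---but there is a genuine gap in the treatment of the time-derivative error. You identify the term $\int (\partial_t a)\,\IM(\partial_{x_1}u\,\bar u)\,dx$ with $\partial_t a=-tx_1/\rho^3$ and propose to bound it using $|\partial_t a|\lesssim 1/|t|$ together with the uniform $H^1$ bound. This does not work: $\sup_{x_1}|\partial_t a(t,x_1)|\sim 1/|t|$ (the maximum is attained near $|x_1|\sim|t|$), and $1/|t|\notin L^1(|t|>1)$, so the resulting bound on the time integral diverges logarithmically. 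The Cauchy--Schwarz alternative you mention fares no better: splitting $\frac{|tx_1|}{\rho^3}|\partial_{x_1}u||u|$ one is left with $\int \frac{x_1^2}{\rho^3}|u|^2\,dx$, and again $\sup_{x_1}\frac{x_1^2}{\rho^3}\sim 1/|t|$. So this cross term cannot be treated as a small error.

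The paper's proof (following Nakanishi) resolves this by using the corrected multiplier $m(u)=a\,\partial_{x_1}u+g\,u$ with $a=-2x_1/\lambda$ and $g=-t^2/\lambda^3 - it/\lambda$, $\lambda=\rho$. The extra $gu$ piece is chosen precisely so that the kinetic term, the cross term, and an additional zeroth-order term combine into the nonnegative perfect square
\[
\frac{|2it\,\partial_{x_1}u+x_1 u|^2}{2\lambda^3}
=\frac{2t^2}{\lambda^3}|\partial_{x_1}u|^2+\frac{x_1^2}{2\lambda^3}|u|^2-\frac{2tx_1}{\lambda^3}\IM(\partial_{x_1}u\,\bar u),
\]
so the problematic $tx_1/\lambda^3$ cross term is absorbed rather than estimated. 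The only genuine remainder is $\frac{|u|^2}{2}\Re\{\partial_{x_1}^2 g\}$, whose weight satisfies $\sup_{x_1}|\Re\partial_{x_1}^2 g|\lesssim |t|^{-3}$, which \emph{is} integrable for $|t|>1$. One then integrates the identity over $\{1<|t|<T\}\times\R^d$ (this also sidesteps the singularity at $t=0$ that you flagged) and lets $T\to\infty$. Your plan becomes correct once you add the correction term $-\int \frac{t}{\lambda}|u|^2\,dx$ to the Morawetz action and recognize the resulting completion of the square.
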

\begin{proof}
The proof follows the ideas of \cite{N}; we shall recall it shortly, with the obvious modifications of our context.
Let us introduce
\begin{equation*}
m(u)=a\partial_{x_1}u+gu
\end{equation*}
with 
\begin{equation*}
\begin{aligned}
a=-\frac{2x_1}{\lambda},\quad  g=-\frac{t^2}{\lambda^3}-\frac{it}{\lambda}, \quad 
\lambda=(t^2+x_1^2)^{1/2}
\end{aligned}
\end{equation*} 
and by using the equation solved by $u(t,x)$ we get
\begin{equation}\label{identity}
\begin{aligned}
0&=\Re\{(i\partial_t u+\Delta u-Vu-|u|^\a u)\bar{m})\}
\\
&=\frac{1}{2}\partial_t\bigg(-\frac{2x_1}{\lambda}\Im\{\bar{u}\partial_{x_1}u\}-\frac{t|u|^2}{\lambda}\bigg)\\
& \quad +\partial_{x_1}\Re\{\partial_{x_1}u\bar{m}-al_V(u)-\partial_{x_1}g\frac{|u|^2}{2}\}\\
& \quad +\frac{t^2G(u)}{\lambda^3}+\frac{|u|^2}{2}\Re\{\partial_{x_1}^2g\}\\
& \quad +\frac{|2it\partial_{x_1}u+{x_1}u|^2}{2\lambda^3}-{x_1}\partial_{x_1}V\frac{|u|^2}{\lambda}\\
& \quad +div_{\bar x}\Re\{\bar m\nabla_{\bar x}u\}.
\end{aligned}
\end{equation} 
with $G(u)=\frac{\a}{\a+2}|u|^{\a+2},$ $l_V(u)=\frac{1}{2}\left(-\Re\{i\bar{u}\partial_tu\}+|\partial_{x_1}u|^2+\frac{2|u|^{\a+2}}{\a+2}+V|u|^2\right)$ and $div_{\bar x}$ is the divergence operator w.r.t. the $(x_2,\dots,x_d)$ variables. 
Making use of the repulsivity assumption in the $x_1$ direction, we get \eqref{pote} by integrating \eqref{identity} on $\{1<|t|<T\}\times\R^d,$ obtaining 
\begin{equation*}
\int_1^T\int_{\R^{d-1}}\int_\R\frac{t^2|u|^{\a+2}}{(t^2+x_1^2)^{3/2}}\,dx_1\,d\bar x\,dt\leq C,
\end{equation*}
where $C=C(M,E)$ depends on mass and energy and then letting $T\to\infty.$
\end{proof}
\begin{lemma}\label{limit-point}
Let $u(t,x)$ be a nontrivial solution to \eqref{NLSV-d} such that  
for a suitable choice $\bar x(t)\in \R^{d-1}$ we have that
$\{u(t,x_1, \bar x-\bar x(t))\}\subset H^1$ is a precompact set.
If $\bar{u}\in H^1$ is one of its limit points, then $\bar{u}\neq0.$
\end{lemma}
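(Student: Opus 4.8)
The plan is to argue by contradiction: suppose that $0 \in H^1$ is a limit point of the precompact family $\{u(t, x_1, \bar x - \bar x(t))\}_{t \in \R^+}$, i.e. there is a sequence $t_n \to +\infty$ with $\|u(t_n, x_1, \bar x - \bar x(t_n))\|_{H^1} \to 0$, and derive that $u \equiv 0$, contradicting the nontriviality assumption. The first step is to observe that, since translations in the $\bar x$ variables are isometries on $H^1$ and the energy $E$ is translation invariant in $\bar x$ (the potential $V$ depends on $\bar x$ only through its period lattice, and in any case the kinetic and potential terms are controlled by the $H^1$-norm), one has $E(u(t_n)) \to 0$. By conservation of energy \eqref{consen} this forces $E(u_0) = 0$, and then — using nonnegativity of $V$ together with the defocusing sign of the nonlinear term — every summand in $E$ vanishes, so in particular $\|\nabla u(t)\|_{L^2} = 0$ and $\|u(t)\|_{L^{\alpha+2}} = 0$ for all $t$, whence $u \equiv 0$. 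This is the clean, short route, and it is essentially the higher-dimensional analogue of the corresponding step in \cite{N, BV}.

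More concretely, I would carry out the steps in this order. First, extract the subsequence $t_n \to +\infty$ realizing the limit point $\bar u = 0$, so that $v_n(x) := u(t_n, x_1, \bar x - \bar x(t_n)) \to 0$ strongly in $H^1$. Second, note $\|v_n\|_{H^1} \to 0$ implies $\|\nabla v_n\|_{L^2}^2 \to 0$, $\|v_n\|_{L^2}^2 \to 0$, and by the Sobolev embedding $H^1 \hookrightarrow L^{\alpha+2}$ (valid in the relevant range of $\alpha$ and $d$, since $2 < \alpha + 2 < 2^*$) also $\|v_n\|_{L^{\alpha+2}}^{\alpha+2} \to 0$; moreover $\int V |v_n|^2\,dx \leq \|V\|_{L^\infty}\|v_n\|_{L^2}^2 \to 0$. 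Third, since $E$ is invariant under the $\bar x$-translation (the $\dot H^1$ and $L^{\alpha+2}$ pieces manifestly, and the potential piece because $\int V(x_1,\bar x)|v_n(x_1,\bar x)|^2\,dx = \int V(x_1, \bar x - \bar x(t_n))|u(t_n,x_1,\bar x)|^2\,dx$, which differs from $\int V|u(t_n)|^2\,dx$ by a quantity controlled uniformly — or, more simply, bound the whole potential term by $\|V\|_{L^\infty}\|u(t_n)\|_{L^2}^2$), conclude $E(u(t_n)) \to 0$. Fourth, invoke \eqref{consen} to get $E(u_0) = 0$. Fifth, since $E(u_0) = \frac12\|\nabla u_0\|_{L^2}^2 + \frac12\int V|u_0|^2 + \frac{1}{\alpha+2}\|u_0\|_{L^{\alpha+2}}^{\alpha+2}$ is a sum of three nonnegative terms (here $V \geq 0$ is used), each vanishes; in particular $\|u_0\|_{L^{\alpha+2}} = 0$, so $u_0 = 0$, and by uniqueness $u \equiv 0$ — contradiction.

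The only point requiring a little care — and the one I expect to be the main (minor) obstacle — is the handling of the potential term in the energy under the moving translation $\bar x(t)$: one must make sure that $E(u(t_n)) \to 0$ genuinely follows from $\|v_n\|_{H^1}\to 0$ despite the fact that $v_n$ is a translate of $u(t_n)$ rather than $u(t_n)$ itself. As indicated above this is resolved by the crude bound $\int V|u(t_n)|^2\,dx \leq \|V\|_{L^\infty}\|u(t_n)\|_{L^2}^2 = \|V\|_{L^\infty}\|v_n\|_{L^2}^2 \to 0$, which sidesteps any need for translation invariance of that term; the kinetic and nonlinear terms are genuinely translation invariant. Everything else is a routine application of conservation of energy, nonnegativity of $V$, the defocusing sign, and Sobolev embedding. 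I would remark that the same argument applies verbatim along a sequence $t_n \to -\infty$, and that it does not use the repulsivity hypothesis $x_1 \partial_{x_1} V \leq 0$ — that assumption enters only in the subsequent step where the Nakanishi–Morawetz estimate \eqref{pote} is combined with this lemma to force extinction of the critical element.
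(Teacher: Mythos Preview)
Your proposal is correct and follows exactly the route the paper indicates: the paper's entire proof is the single sentence ``This property simply follows from the conservation of the energy,'' and your argument spells out precisely that mechanism (energy conservation together with nonnegativity of each summand when $V\geq 0$). One tiny quibble: you assume the realizing sequence satisfies $t_n\to+\infty$, but a limit point of the precompact family could in principle arise along a bounded sequence of times; however, your argument nowhere uses $t_n\to+\infty$, so this is harmless.
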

\begin{proof}
This property simply follows from the conservation of the energy.
\end{proof}

\begin{lemma}\label{lem2}
If $u(t,x)$ is an in \autoref{limit-point} then for any $\varepsilon>0$ there exists $R>0$ such that 
\begin{equation}
\sup_{t\in \R} \int_{\R^{d-1}} \int_{|x_1|>R} (|u|^2+|\nabla_x u|^2+|u|^{\alpha+2})\,d\bar x\,dx_1<\varepsilon.
\end{equation} 
\end{lemma}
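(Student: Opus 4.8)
The plan is to deduce the uniform tail estimate directly from the precompactness of the spatially translated trajectory, exploiting that the translation $\bar x(t)$ moves only the $\bar x$-variables and hence cannot interfere with localization in $x_1$.

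First I would set $\tilde u(t,x_1,\bar x):=u(t,x_1,\bar x-\bar x(t))$ and observe that, since translations in $\bar x$ preserve both the Lebesgue measure on $\R^{d-1}$ and the region $\{|x_1|>R\}$, and since $|\nabla u|^2=|\partial_{x_1}u|^2+|\nabla_{\bar x}u|^2$ is invariant under such translations, one has for every $t$ and every $R>0$
\[
\int_{\R^{d-1}}\!\int_{|x_1|>R}\!\big(|u(t)|^2+|\nabla u(t)|^2+|u(t)|^{\alpha+2}\big)\,dx_1\,d\bar x=\int_{\R^{d-1}}\!\int_{|x_1|>R}\!\big(|\tilde u(t)|^2+|\nabla \tilde u(t)|^2+|\tilde u(t)|^{\alpha+2}\big)\,dx_1\,d\bar x .
\]
Thus it suffices to bound the right-hand side uniformly in $t$, and by hypothesis (and \autoref{lemcri}) the set $K:=\overline{\{\tilde u(t):t\in\R\}}$ is a compact subset of $H^1$.

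Then I would run the standard compactness/covering argument. Fix $\varepsilon>0$. For each $g\in K$ we have $g\in L^2$, $\nabla g\in L^2$ and, since $\alpha$ lies in the admissible subcritical range, $g\in L^{\alpha+2}$ by the Sobolev embedding $H^1\hookrightarrow L^{\alpha+2}$; hence by dominated convergence there is $R_g>0$ with $\int_{\R^{d-1}}\int_{|x_1|>R_g}(|g|^2+|\nabla g|^2+|g|^{\alpha+2})\,dx_1\,d\bar x<\varepsilon/2$. For this fixed $R_g$ the functional $h\mapsto\int_{\R^{d-1}}\int_{|x_1|>R_g}(|h|^2+|\nabla h|^2+|h|^{\alpha+2})\,dx_1\,d\bar x$ is continuous on $H^1$ (the quadratic part trivially, and the $L^{\alpha+2}$ part again via $H^1\hookrightarrow L^{\alpha+2}$), so it stays below $\varepsilon$ on an $H^1$-ball around $g$. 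Finitely many such balls, centered at $g_1,\dots,g_N$, cover $K$; setting $R:=\max_{1\le i\le N}R_{g_i}$ and using monotonicity of the tail in $R$, we get $\int_{\R^{d-1}}\int_{|x_1|>R}(|\tilde u(t)|^2+|\nabla\tilde u(t)|^2+|\tilde u(t)|^{\alpha+2})\,dx_1\,d\bar x<\varepsilon$ for every $t\in\R$. Combined with the displayed identity, this yields the claim.

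I do not expect a serious obstacle here; the only two points that need care are that the compactness in \autoref{lemcri} is obtained only modulo translations in $\bar x$ — which is precisely what makes the $x_1$-localization meaningful and legitimizes the invariance step above — and the presence of the nonlinear density $|u|^{\alpha+2}$ in the tail, which is handled by the Sobolev embedding $H^1\hookrightarrow L^{\alpha+2}$ valid in the range of $\alpha$ considered.
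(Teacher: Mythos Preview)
Your argument is correct and is precisely the standard compactness argument the paper has in mind: the paper's own proof consists of the single sentence ``This is a well-known property implied by the precompactness of the sequence,'' and you have supplied the details. The one point worth making explicit, which you handle correctly, is that the translation $\bar x(t)$ acts only in the $\bar x$-variables and therefore leaves the $x_1$-tail invariant; this is exactly why precompactness modulo $\bar x$-translations suffices to control the integral over $\{|x_1|>R\}$.
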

\begin{proof}
This is a well-known property implied by the precompactness of the sequence. 
\end{proof}
\begin{lemma}\label{lem1}
If $u(t,x)$ is an in \autoref{limit-point}  
then there exist $R_0>0$ and $\varepsilon_0>0$ such that 
\begin{equation}
\int_{\R^{d-1}}\int_{|x_1|<R_0}|u(t,x_1,\bar x-\bar x(t))|^{\alpha+2}\,d\bar x\,dx_1>\varepsilon_0 \qquad \forall\,t\in\R^+.
\end{equation} 
\end{lemma}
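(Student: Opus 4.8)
The plan is to deduce the claim by combining a uniform-in-time lower bound for the \emph{global} $L^{\alpha+2}$ norm of $u(t)$ with the tightness already recorded in \autoref{lem2}. The starting observation is that the trajectory is bounded in $H^1$: by the conservation laws \eqref{consmass} and \eqref{consen} together with $V\geq 0$, the energy controls $\|\nabla u(t)\|_{L^2}^2+\tfrac{2}{\alpha+2}\|u(t)\|_{L^{\alpha+2}}^{\alpha+2}$ and the mass controls $\|u(t)\|_{L^2}^2$, so that $\sup_{t\in\R}\|u(t)\|_{H^1}<\infty$. Since $2<\alpha+2<2^*$ (for $d\geq 3$ this is exactly the upper restriction $\alpha<\tfrac{4}{d-2}$, and it is automatic for $d\leq 2$), the Sobolev embedding $H^1\hookrightarrow L^{\alpha+2}$ is continuous, so $H^1$-convergence of a sequence implies its convergence in $L^{\alpha+2}$.

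The key step is to show that there exists $\delta_0>0$ such that $\|u(t)\|_{L^{\alpha+2}}^{\alpha+2}\geq\delta_0$ for every $t\in\R^+$. Suppose not; then there is a sequence $t_n\in\R^+$ with $\|u(t_n)\|_{L^{\alpha+2}}\to 0$. By the precompactness of $\{u(t,x_1,\bar x-\bar x(t))\}_{t\in\R^+}$ in $H^1$ (see \autoref{lemcri}), after passing to a subsequence $u(t_n,x_1,\bar x-\bar x(t_n))\to\bar u$ in $H^1$, hence also in $L^{\alpha+2}$ by the embedding just recalled. Because the translation $\bar x\mapsto\bar x-\bar x(t_n)$ leaves the $L^{\alpha+2}$ norm invariant, we get $\|\bar u\|_{L^{\alpha+2}}=\lim_n\|u(t_n,x_1,\bar x-\bar x(t_n))\|_{L^{\alpha+2}}=\lim_n\|u(t_n)\|_{L^{\alpha+2}}=0$, i.e. $\bar u\equiv 0$. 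This contradicts \autoref{limit-point}, which guarantees that every limit point of the trajectory is nonzero. Hence such a $\delta_0$ exists.

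It remains to localize. Apply \autoref{lem2} with $\varepsilon=\delta_0/2$: there is $R_0>0$ such that
\[
\sup_{t\in\R}\int_{\R^{d-1}}\int_{|x_1|>R_0}|u(t,x_1,\bar x)|^{\alpha+2}\,d\bar x\,dx_1<\frac{\delta_0}{2}.
\]
Since the inner integral runs over all of $\R^{d-1}$ in the variable $\bar x$, the change of variables $\bar x\mapsto\bar x-\bar x(t)$ shows that the same bound, and the corresponding integral over $|x_1|<R_0$, are unchanged if $u(t,x_1,\bar x)$ is replaced by $u(t,x_1,\bar x-\bar x(t))$. Therefore, for every $t\in\R^+$,
\[
\int_{\R^{d-1}}\int_{|x_1|<R_0}|u(t,x_1,\bar x-\bar x(t))|^{\alpha+2}\,d\bar x\,dx_1=\|u(t)\|_{L^{\alpha+2}}^{\alpha+2}-\int_{\R^{d-1}}\int_{|x_1|>R_0}|u(t,x_1,\bar x)|^{\alpha+2}\,d\bar x\,dx_1\geq\delta_0-\frac{\delta_0}{2}=\frac{\delta_0}{2},
\]
so the conclusion holds with $\varepsilon_0:=\delta_0/2$.

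I do not expect a real obstacle here; the only points deserving a little attention are verifying that $\alpha+2$ is strictly subcritical, so that $H^1$-convergence of the limit point upgrades to $L^{\alpha+2}$-convergence, and the bookkeeping that the path $\bar x(t)$ translates only the periodic variables $\bar x$ and hence does not affect any of the spatial integrals above once one integrates over the whole of $\R^{d-1}$.
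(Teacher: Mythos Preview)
Your argument is correct and follows essentially the same route as the paper: first establish a uniform-in-time lower bound on $\|u(t)\|_{L^{\alpha+2}}$ by contradiction via precompactness and \autoref{limit-point}, then localize using \autoref{lem2}. You have simply spelled out in more detail the steps the paper leaves implicit (the Sobolev embedding $H^1\hookrightarrow L^{\alpha+2}$ and the translation invariance of the $\bar x$-integral).
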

\begin{proof}
It is sufficient to prove that $\inf_{t\in \R^+} \|u(t ,x_1,\bar x-\bar x(t))\|_{L^{\alpha+2}}
>0$, then the result follows by combining this fact with
\autoref{lem2}.
If by the absurd it is not true then 
there exists a sequence $\{t_n\}_{n\in \N}\subset\R^+$ such that 
$u(t_n ,x_1,\bar x-\bar x(t_n))
\overset{n\ra\infty}\longrightarrow 0$ in $L^{\alpha+2}.$
On the other hand by the compactness assumption, it implies that
$u(t_n ,x_1,\bar x-\bar x(t_n))
\overset{n\ra\infty}\longrightarrow 0$ in $H^{1}$,
and it is in contradiction with \autoref{limit-point}.
\end{proof}

We now conclude the proof of scattering for large data, by showing the extinction
of the minimal element. 
Let $R_0>0$ and $\varepsilon_0>0$ be given by \autoref{lem1}, then
\begin{equation*}
\begin{aligned}
\int_\R\int_{\R^{d-1}}\int_\R\frac{|u|^{\alpha+2}t^2}{(t^2+x_1^2)^{3/2}}\,dx_1\,d\bar x\,dt&\geq\int_\R \int_{\R^{d-1}}\int_{|x_1|<R_0}\frac{t^2|u(t,x_1,\bar x-\bar x(t))|^{\alpha+2}}{(t^2+x_1^2)^{3/2}}\,dx_1\,d\bar x\,dt\\
&\geq\varepsilon\int_{1}^T\frac{t^2}{(t^2+R_0^2)^{3/2}}\,dt\to\infty\qquad \text{if}\quad T\to\infty.
\end{aligned}
\end{equation*}
Hence we contradict \eqref{pote} and we get that the critical element
cannot exist.
 
\section{Double scattering channels in $1D$}

This last section is devoted to prove \autoref{linscat}. Following \cite{DaSi} (see \emph{Example 1,} page $283$) we have the following property:
\begin{equation}\label{multi}
\begin{aligned}
\forall\, \psi\in L^2 \quad &\exists\,\eta_\pm, \gamma_\pm\in L^2 \hbox{ \,such that } \\
\|e^{it (\partial_x^2 - V)} \psi &-e^{it\partial_x^2}\eta_\pm-e^{it(\partial_x^2 -1)}\gamma_\pm\|_{L^2}\overset{t \rightarrow\pm \infty}\longrightarrow 0.
\end{aligned}
\end{equation}

Our aim is now to show that \eqref{multi} actually holds in $H^1$ provided that $\psi\in H^1$. We shall prove this property for $t\rightarrow +\infty$ (the case $t\rightarrow -\infty$
is similar).

\subsection{Convergence \eqref{multi} occurs in $H^1$ provided that $\psi\in H^2$}

In order to do that it is sufficient to show that 
\begin{equation}\label{firststep}\psi\in H^2 \Longrightarrow
\eta_+, \gamma_+\in H^2.\end{equation}
Once it is proved then we conclude the proof of this first step by using the following interpolation
inequality 
$$\|f\|_{H^1}\leq \|f\|_{L^2}^{1/2} \|f\|_{H^2}^{1/2}$$
in conjunction with \eqref{multi} and with the bound 
$$ 
\sup_{t\in \mathbb R} \|e^{it (\partial_x^2 - V)} \psi -e^{it\partial_x^2}\eta_+-e^{it(\partial_x^2 -1)}\gamma_+\|_{H^2}<\infty$$
(in fact this last property follows by the fact that $D(\partial_x^2 - V(x))=H^2$ is preserved along the linear flow and by \eqref{firststep}).
Thus we show \eqref{firststep}. 
Notice that by \eqref{multi} we get
\begin{equation*}
\|e^{-it\partial_x^2}e^{it (\partial_x^2 - V)} \psi -\eta_+-e^{-it}\gamma_+\|_{L^2}\overset{t \rightarrow \infty}\longrightarrow 0,
\end{equation*}
and by choosing as subsequence $t_n=2\pi n$ we get
\begin{equation*}
\|e^{-it_n\partial_x^2}e^{it_n (\partial_x^2 - V)} \psi -\eta_+-\gamma_+\|_{L^2}\overset{n \rightarrow \infty}\longrightarrow 0.
\end{equation*}
By combining this fact with the bound 
$\sup_n \|e^{-it_n\partial_x^2}e^{it_n (\partial_x^2 - V)} \psi\|_{H^2}<\infty$
we get 
$\eta_++\gamma_+\in H^2$. 
Arguing as above but by choosing $t_n=(2n+1)\pi$ we also get
$\eta_+-\gamma_+\in H^2$ and hence necessarily $\eta_+, \gamma_+\in H^2$.

\subsection{The map $H^2\ni \psi\mapsto (\eta_+, \gamma_+)\in H^2\times H^2$ satisfies $\|\gamma_+\|_{H^1}+\|\eta_+\|_{H^1}\lesssim \|\psi\|_{H^1}$}
Once this step is proved then we conclude by a straightforward density argument.
By a linear version of the conservation laws \eqref{consmass}, \eqref{consen} 
we get 
\begin{equation}\label{H1V}
\|e^{it (\partial_x^2 - V)} \psi\|_{H^1_V}=
\|\psi\|_{H^1_V}
\end{equation}
where
$$
\|w\|_{H^1_V}^2
=\int |\partial_x w|^2 dx+\int V|w|^2dx+\int |w|^2 dx.
$$
Notice that this norm is clearly equivalent to the usual norm of $H^1$.
\\
Next notice that by using the conservation of the mass 
we get
\begin{equation*}
\|\eta_++\gamma_+\|_{L^2}^2=\|\eta_+ + e^{-2n\pi i}\gamma_+\|_{L^2}^2
=\|e^{i2\pi n\partial_x^2}\eta_+ + e^{i2\pi n(\partial_x^2-1)}\gamma_+\|_{L^2}^2
\end{equation*}
and by using 
\eqref{multi} we get
\begin{equation*}\|\eta_++\gamma_+\|_{L^2}^2=\lim_{t\rightarrow\infty}\|e^{it( \partial_x^2 - V)} \psi\|_{L^2}^2
=\|\psi\|_{L^2}^2
\end{equation*}
Moreover we have
\begin{align}\notag
\|\partial_x(\eta_++\gamma_+)\|^2_{L^2}&=\|\partial_x(\eta_++e^{-2n\pi i}\gamma_+)\|^2_{L^2}=\|\partial_x(e^{i2\pi n\partial_x^2}(\eta_++e^{-i2\pi n}\gamma_+))\|^2_{L^2}\\\notag
&=\|\partial_x(e^{i2\pi n \partial_x^2}\eta_++e^{i2\pi n (\partial_x^2-1)}\gamma_+)\|^2_{L^2}\\\notag
\end{align}
and by using the previous step and \eqref{H1V} we get
\begin{align*}
\|\partial_x(\eta_++\gamma_+)\|^2_{L^2}=&\lim_{t\rightarrow+\infty}\|\partial_x(e^{it(\partial_x^2 - V)}\psi)\|^2_{L^2}\\
\leq&\lim_{t\rightarrow \infty}\|e^{it(\partial_x^2 - V)}\psi \|^2_{H^1_V}=\|\psi\|^2_{H^1_V}
\lesssim \|\psi\|^2_{H^1}.
\end{align*}
Summarizing we get 
$$\|\eta_++\gamma_+\|_{H^1}\lesssim \|\psi\|_{H^1}.$$
By a similar argument and by replacing the sequence $t_n=2\pi n$ by 
$t_n=(2n+1)\pi$ we get
$$\|\eta_+-\gamma_+\|_{H^1}\lesssim \|\psi\|_{H^1}.$$
The conclusion follows.

\section*{acknowledgements}

The first author would like to thank Professor Kenji Yajima for having brought the theory of double channel scattering to his knowledge.
The second author is supported by the research project PRA 2015. 
Both authors are grateful to the referee for useful remarks and comments that
improved a previous version of the paper.


\end{document}